\theoremstyle{thmstyleone}%
\newtheorem{theorem}{Theorem}
\theoremstyle{thmstyletwo}%
\theoremstyle{thmstylethree}%
\newtheorem{definition}{Definition}%
\newtheorem{lemma}{Lemma} 
\newcommand{\bb}{\mbox{\boldmath$b$}}
\newcommand{\be}{\mbox{\boldmath$e$}}
\newcommand{\br}{\mbox{\boldmath$r$}}
\newcommand{\bv}{\mbox{\boldmath$v$}}
\newcommand{\bw}{\mbox{\boldmath$w$}}
\newcommand{\bx}{\mbox{\boldmath$x$}}
\newcommand{\by}{\mbox{\boldmath$y$}}
\newcommand{\bz}{\mbox{\boldmath$z$}}
\newcommand{\nul}{{\cal N}}
\newcommand{\ran}{{\cal R}}
\newcommand{\rank}{\mbox{rank}}
\newcommand{\real}{{\bf R}}
\newcommand{\rk}{{\bf R}^k}
\newcommand{\rmn}{{\bf R}^{m \times n}}
\newcommand{\Rm}{{\bf R}^m}
\newcommand{\rn}{{\bf R}^n}
\newcommand{\rr}{{\bf R}^r}
\newcommand{\rnm}{{\bf R}^{n \times m}}
\newcommand{\rnn}{{\bf R}^{n \times n}}
\newcommand{\rnr}{{\bf R}^{n \times r}}
\newcommand{\rrr}{{\bf R}^{r \times r}}
\newcommand{\rkk}{{\bf R}^{k \times k}}
\newcommand{\argmin}{\mathop{\rm arg~min}\limits}
\newcommand{\rnnr}{{\bf R}^{n \times (n-r)}}
\def\vector#1{\mbox{\boldmath $#1$}}
\begin{document}

\title{NR-SSOR right preconditioned RRGMRES for non-range-symmetric systems 
and rank-deficient rectangular least squares problems}


\author{Kota Sugihara\thanks{No Affiliation, E-mail: kouta.sugihara@gmail.com} \,
and Ken Hayami\thanks{Professor Emeritus, National Institute of Informatics, and 
The Graduate University for Advanced Studies (SOKENDAI),
Tokyo, Japan, E-mail: hayami@nii.ac.jp}
}










\maketitle

\abstract{GMRES is known to determine a least squares (LS) solution of $ A \bx = \bb $ 
where  $ A \in \rnn $ without breakdown for arbitrary $ \bb \in \rn$, and 
initial iterate $\bx_0 \in \rn $ 
if and only if $A$ is range-symmetric, i.e.
$ \ran(A^{\rm T}) = \ran(A) $ holds, where $ A $ may be singular 
and $\bb $ may not be in the range space $\ran(A)$ of $A$. 
On the other hand, when $ \ran(A^{\rm T}) \neq \ran(A) $,
including the case when the index of $A$ is greater than or equal to 1,
there exist $\bx_0$ and $\bb$ such that GMRES breaks down without giving 
a LS solution.

In this paper, to solve this difficulty, we propose applying the Range 
Restricted GMRES (RRGMRES) to $ A C A^{\rm T}  \bz = \bb $, where 
$ C \in \rnn $ is symmetric positive definite (SPD).
The proposed RRGMRES robustly determines 
a least squares solution $ \bx = CA^{\rm T} \bz$ of $ A \bx = \bb $ 
without breakdown for 
arbitrary (singular) matrix $A \in \rnn$ and $\bb, \bx_0 \in \rn$ 
since $ A C A^{\rm T}$ is symmetric even if $ \ran(A^{\rm T}) \neq \ran(A) $.
In particular, we prove that $C \in \rnn$ for NR-SSOR right preconditioner 
is SPD
and propose the NR-SSOR right preconditioned RRGMRES,
which also works efficiently for rectangular (rank-deficient) least squares problems 
$ \min_{\bx \in \rn} \| \bb - A \bx \|_2 $ for $ A \in \rmn$ and arbitrary $\bb \in \Rm$.

Numerical experiments demonstrate the validity of the proposed method.}

{\bf Key words.}
GMRES; Singular systems; Range-symmetry; GP; Index; 
 RRGMRES; Right preconditioning; NR-SSOR; Least squares problems; MINRES-QLP  





\section{Introduction}\label{sec1}

The Generalized Minimal Residual (GMRES) method \cite{SS} is a robust and efficient Krylov subspace 
iterative method for systems of linear equations
\begin{equation}\label{lineq}
A \bx = \bb
\end{equation}
 where $ A \in \rnn$ is nonsingular and may be nonsymmetric, and $\bx, \bb \in \rn$. 

Abstractly, GMRES begins with an initial approximate solution $ \bx_0 \in \rn$ and initial residual $ \br_0 = \bb - A \bx_0 $ and 
characterizes the $k$th approximate solution as $\bx_k = \bx_0 + \bz_k$, where $\bz_k$ solves
\begin{equation}
 \min_{ \bz \in {\cal K}_k } \| \bb - A ( \bx_0 + \bz ) \|_2 
  = \min_{ \bz \in {\cal K}_k } \| \br_0 - A \bz \|_2 .
 \label{GMRES}
\end{equation}
Here, ${\cal K}_k$ is the $k$th Krylov subspace determined by $A$ and $\br_0$, defined by
\[ {\cal K}_k \equiv {\rm span} \{ \br_0, A\br_0, \ldots, A^{k-1} \br_0 \}. \]

The implementation given in \cite{SS} is as follows.
\\ \\
{\bf Algorithm 1: GMRES}\\ \\
Choose $\bx_0$. 
$ \br_0 = \bb - A \bx_0 $, 
$ \bv_1 = \br_0 / ||\br_0||_2 $ \\
For $ j = 1, 2, \cdots $ until satisfied do \\
\hspace{5mm} $ h_{i,j}=(\bv_i,A \bv_j )\hspace{4mm}(i=1,2,\ldots,j) $ \\ 
\hspace{5mm}
$ {\displaystyle \hat{\bv}_{j+1} = A \bv_j - \sum_{i=1}^j h_{i,j} \bv_i } $ \\
\hspace{5mm} $ h_{j+1,j} =||\hat{\bv}_{j+1} ||_2 $.
\hspace{4mm}If $ h_{j+1,j} =0, $ goto $ \ast $. \\
\hspace{5mm} $ \bv_{j+1} = \hat{\bv}_{j+1} / h_{j+1,j} $ \\
End do \\
$ \ast \, k:=j $ \\
Form the approximate solution \\ 
\hspace{5mm} $ \bx_k = \bx_0 + [ \bv_1,\ldots,\bv_k] \by_k $ \\
where $ \by = \by_k $ minimizes $  ||\br_k||_2 = ||\beta\be_1-\overline{H}_k \by ||_2 . $
\begin{equation}
\nonumber
\end{equation}

Here, $\vector{r}_{k} = \vector{b} - A\vector{x}_{k}$, and $ \overline{H}_k = [ h_{i,j} ] \in {\bf R}^{ (k+1) \times k}$ is a 
Hessenberg matrix, i.e., $h_{i,j}=0$ for $i>j+1$. \, \,
$\beta={||\br_0 ||_2} $ \, and \, $ \be_1 = [1,0,\ldots,0]^{\rm T} \in {\bf R}^{k+1} $.

When $A$ is nonsingular, GMRES gives the solution for arbitrary $\vector{b}$ and $\vector{x}_0$.
When $ A $ is singular, one may still apply GMRES to the least squares problem
\begin{equation}
 \min_{\bx \in \rn} \| \bb - A \bx \|_2 .
 \label{lstsq}
\end{equation} 
However, Brown and Walker \cite[Theorem 2.4]{BW} showed that GMRES determines a least squares solution of (\ref{lstsq}) without 
breakdown for all $ \bb $ and initial approximate solution $ \bx_0 $ if and only if $ A $ is range-symmetric (EP), 
that is $ \ran(A)=\ran(A^{\rm T})$, where $ \ran(A) $ denotes the range space of $ A $. (See also \cite{HS} for an 
alternative proof.)

There are two kinds of definitions for the breakdown of GMRES.
One definition is that
GMRES breaks down at the $k$th iteration when 
${\rm dim}A({\cal K}_k) < {\rm dim}{\cal K}_k$ or ${\rm dim}{\cal K}_k < k$ \cite{BW}.
The other definition is that GMRES breaks down at the $k$th iteration when $ h_{k+1,k} = 0$ in Algorithm 1.
The equivalence between these definitions for the breakdown of GMRES is discussed in \cite{MH}. 
In this paper, we will use the latter definition.

When $ \ran(A^{\rm T}) \neq \ran(A) $,
including the case when the index of $A$ is greater than or equal to 1,
there exist $\bx_0$ and $\bb$ such that GMRES breaks down without giving 
a least squares solution \cite{HS}.
In order to overcome this problem, Reichel and Ye \cite{RY} proposed the 
breakdown-free GMRES, which is designed to expand 
the solution Krylov subspace when GMRES breaks down. However, they do not give 
a theoretical justification that 
this method always works.

In this paper, we consider
solving square singular systems where
$ \ran(A^{\rm T}) \neq \ran(A) $, especially where $A$ is GP or
the index of $A$ is greater than $1$.
To do so, 
we propose the method to overcome the difficulty that
GMRES does not necessarily determine a least squares solution when 
$ \ran(A^{\rm T}) \neq \ran(A)$ 
and give a theoretical justification that 
our proposed method always works.
The proposed method is as follows.

For the least squares problem 
\begin{equation}
 \min_{\bx \in \rn} \| \bb - A \bx \|_2
 \label{lstsq2}
\end{equation} 
where $ A \in \rmn $, Hayami, Yin and Ito \cite{HYI} proposed the AB-GMRES method which applies GMRES to 
\begin{equation}
 \min_{\bz \in \Rm} \| \bb -AB \bz \|_2 
 \label{AB}
\end{equation}
where $ B \in \rnm $ satisfies $ \ran (AB) = \ran (A) $.

If we let $ m=n $, and $ B=CA^{\rm T} $, where $ C \in \rnn $ is symmetric positive-definite, then $ \ran(A) = \ran (AB) $
 holds (\cite[Lemma 3.4]{HYI}).
Note also that $ \ran ( (AB)^{\rm T} )= \ran (AB)$, i.e. $ AB $ becomes range-symmetric, so that the AB-GMRES determines a least 
squares solution of (\ref{AB}) for arbitrary $ \bb \in \Rm$. Note also that $B=CA^{\rm T}$ can be regarded as a right preconditioner.
Therefore, instead of $A\vector{x} = \vector{b}$, we apply GMRES to
$ACA^{\rm T}$ where $C$ is symmetric positive definite. 
Then, AB-GMRES determines a least squares solution 
$ \bx = CA^{\rm T} \bz$ of $ A \bx = \bb $ 
without breakdown for 
arbitrary (singular) matrix $A \in \rnn$ and $\bb, \bx_0 \in \rn$ 
since $ A C A^{\rm T}$ is symmetric even if $ \ran(A^{\rm T}) \neq \ran(A) $.

Note however, that 
even when $\ran ( A^{\rm T} )= \ran (A) $, when $ \bb \notin \ran (A)$, the least squares problem (\ref{GMRES}) 
in GMRES may 
become dangerously ill-conditioned before a least squares solution of (\ref{lstsq}) is obtained (\cite[Theorem 2.5]{BW}).

To overcome this difficulty, we further propose using the Range Restricted GMRES (RRGMRES) \cite{CLR} instead of GMRES 
because
RRGMRES is much more stable compared to GMRES for inconsistent range-symmetric systems.

The $k$th iterate of RRGMRES is given by $\vector{x}_k = \vector{x}_0 + \vector{z}_k$ where
$\vector{z}_k$ solves \\ $\displaystyle \min_{\vector{z} \in {\cal K}_k(A, A\vector{r}_0)}\|\vector{r}_0 - A\vector{z}\|_2$.
The implementation of RRGMRES given in \cite{CLR} as follows.
\\ \\
{\bf Algorithm 2: RRGMRES}\\ \\
Choose $\bx_0$. $ \br_0 = \bb - A \bx_0 $, \\
$ \bv_1 = A\br_0 / ||A\br_0||_2 $ \\
For $ j = 1, 2, \cdots $ until satisfied do \\
\hspace{5mm} $ h_{i,j}=(\bv_i,A \bv_j )\hspace{4mm}(i=1,2,\ldots,j) $ \\ 
\hspace{5mm}
$ {\displaystyle \hat{\bv}_{j+1} = A \bv_j - \sum_{i=1}^j h_{i,j} \bv_i } $ \\
\hspace{5mm} $ h_{j+1,j} =||\hat{\bv}_{j+1} ||_2 $.
\hspace{4mm}If $ h_{j+1,j} =0, $ goto $ \ast $. \\
\hspace{5mm} $ \bv_{j+1} = \hat{\bv}_{j+1} / h_{j+1,j} $ \\
End do \\
$ \ast \, k:=j $ \\
Form the approximate solution \\ 
\hspace{5mm} $ \bx_k = \bx_0 +  [ \bv_1,\ldots,\bv_k] \by_k $ \\
where $ \by = \by_k $ minimizes $  ||\br_k||_2^2 = \|H_{k+1,k}\vector{y} - V_{k+1}^{\rm T}\vector{r}_0\|_{2}^{2} 
  + \|(I_{n} - V_{k+1}V_{k+1}^{\rm T})\vector{r}_0\|_2^{2}. $
\begin{equation}
\nonumber
\end{equation}

Here, $V_{k+1} =  [ \bv_1,\ldots,\bv_{k+1}] \in {\bf R}^{n \times (k+1)}$, and $\vector{r}_{k} = \vector{b} - A\vector{x}_{k}$, 
and $ H_{k+1,k} = [ h_{i,j} ] \in {\bf R}^{ (k+1) \times k}$ is a 
Hessenberg matrix, i.e., $h_{i,j}=0$ for $i>j+1$, and 
$I_n \in \rnn$ is an identity matrix. 
Furthermore, the last line of {\bf Algorithm 2} is justified by Lemma \ref{RReq} in Appendix \ref{rrnonsg}.

The AB-GMRES with $B = A^{\rm T}C$ using NE-SOR as the inner-iteration preconditioner determines a solution of 
$A\vector{x} = \vector{b}$ without breakdown for all $\vector{b} \in \ran(A)$
and all initial iterate $\vector{x}_0 \in \rn$ \cite{MH}. 
However, the same 
method does not necessarily determine 
a least squares solution of $A\vector{x} = \vector{b}$ when 
$\vector{b} \notin \ran(A)$ since $\ran(AB) = \ran((AB)^{\rm T})$ does 
not necessarily hold when $C$ is not an identity matrix.

To overcome this difficulty of AB-GMRES with $B = A^{\rm T}C$ 
using NE-SOR for inconsistent systems, and
speed up the convergence of AB-GMRES and AB-RRGMRES,
 we propose using $B = CA^{\rm T}$ with NR-SSOR as the inner iteration right preconditioner, 
and show that the corresponding $C$ matrix is symmetric positive definite
when $A$ has no zero columns and $0 < \omega < 2$ where $\omega$ is the relaxation parameter
of NR-SSOR.
Therefore, we can prove that NR-SSOR right preconditioned RRGMRES determines 
a least squares solution of (\ref{lstsq2}) for arbitrary $ \bb \in \rn$,
 including the inconsistent case
even if $\ran(A) \neq \ran(A^{\rm T})$.
Furthermore, 
the proposed method also works well for rectangular least squares problems 
$ \min_{\bx \in \rn} \| \bb - A \bx \|_2 $ where $ A \in \rmn$ and $\vector{b} \in \Rm$.
Numerical experiments with systems with non-range-symmetric $A$ with ${\rm index}(A) \geq 1$
and underdetermined ($m < n$) least squares problems,
show the validity of the proposed method.

The rest of the paper is organized as follows. In section \ref{GpEp}, we explain the definition of index, GP, EP matrices,
underdetermined problems and
their applications.
In section \ref{ConvGm}, we review the theory for GMRES on singular systems. 
In section \ref{RpreGrr}, we propose the right preconditioned RRGMRES for arbitrary singular systems, including the case when the index of $A$ is greater than
or equal to $1$. In section \ref{NRssApp}, we propose 
applying the NR-SSOR as the inner-iteration right preconditioner
where $B = CA^{\rm T}$,
and show that the corresponding $C$ matrix is symmetric positive definite.
In section \ref{MINQLP}, we explain the MINRES-QLP method.
We present numerical experiment results for the proposed method for GP systems in section \ref{NumGP},
and for index 2 systems in section \ref{NumId2}, and for underdetermined least squares problems in section \ref{NumLS}
and compare with the MINRES-QLP method \cite{CPS}.
In section \ref{ConcL}, we conclude the paper. In the Appendix \ref{rrnonsg} and \ref{rrsg}, 
we give the convergence theory for RRGMRES
for nonsingular and singular systems.

\section{GP, EP matrices and their applications}\label{GpEp}
index($A$) of $A \in \rnn$ denotes the smallest nonnegative integer $i$  such 
that $\rank(A^{i}) = \rank(A^{i+1})$ \cite[Definition 7.2.1]{CM} 
and it is thus equal to the size of the largest Jordan block 
corresponding to the zero eigenvalue of $A$ \cite[section 3]{OL}. 
If $\ran(A) \cap \nul(A) = {\vector{0}}$,
 $A$ is called a GP (group) matrix  \cite[section 1]{HaSp}. 
(A group matrix is a matrix which has a group inverse.)  
If $A$ is singular, $A$ is GP if and only if ${\rm index}(A) = 1$.
The GP matrix arises, for example, from the finite difference discretization of a convection diffusion 
equation with Neumann boundary condition 
\begin{eqnarray}\label{neueq}
\Delta u + d \frac{\partial u}{\partial x_{1}} & = & x_{1} + x_{2},~~\vector{x}=(x_{1}, x_{2}) \in \Omega \equiv [0,1] \times [0,1],  \\
\frac {\partial u(\vector{x})}{\partial \nu} & = & 0 ~~{\rm for}~~ \vector{x} \in \partial \Omega ,
\end{eqnarray}
as in \cite{BW}.

Assume $A$ arises from the finite difference discretization of the above equation. 
Then, $\nul(A^{\rm T}) \neq \nul(A)$ and 
$\ran(A) \cap \nul(A) = \{\vector{0}\}$ hold. Then, A is a GP matrix.

GP matrices also arise in the analysis of ergodic homogeneous finite Markov chains \cite{FH}.

When $\ran(A^{\rm T}) = \ran(A)$, $A$ is called range-symmetric or EP (Equal Projectors) \cite{CM}. Note that, since
$\nul(A^{\rm T}) = \ran(A)^{\perp}$, $\ran(A^{\rm T}) = \ran(A)
\Leftrightarrow \nul(A^{\rm T}) = \nul(A) \Leftrightarrow \ran(A) \perp \nul(A)$.
Hence, $\ran(A^{\rm T}) = \ran(A) \Rightarrow \ran(A) \cap \nul(A) 
= \{\vector{0}\}$.
That is, an EP matrix is a GP matrix. 
Therefore, if $A$ is EP and singular, then ${\rm index}(A) = 1$.
Furthermore, if ${\rm index}(A) \geq 2$, $A$ is not EP, 
that is $\ran(A) \neq \ran(A^{\rm T})$ holds.

An EP matrix arises, for instance, from the finite difference
approximation
of the convection diffusion equation (\ref{neueq}) with a periodic boundary condition 
\begin{eqnarray*}
u(x_{1}, 0) = u(x_{1}, 1), ~~x_{1} \in [0,1], \\
u(0, x_{2}) = u(1, x_{2}), ~~x_{2} \in [0,1],
\end{eqnarray*}
as in \cite{BW}.

\section{Convergence theory of GMRES for singular systems}\label{ConvGm}
Consider the system of linear equations (\ref{lineq})
and the least squares problem (\ref{lstsq}).
(\ref{lineq}) is called consistent 
when $\vector{b}\in \ran(A)$, and inconsistent otherwise.
Brown and Walker \cite{BW} showed that GMRES determines a least squares solution of 
(\ref{lstsq}) without breakdown for arbitrary $ \bb, \bx_0 \in \rn $ if and only if $A$ is range symmetric (EP), 
i.e. $\ran(A^{\rm T}) = \ran(A)$, where $ A \in \rnn $ may be singular and $\bb $ may not be  in the range space $\ran(A)$ of $A$.
In \cite{BW}, it was also pointed out that even if $\ran(A^{\rm T}) = \ran(A)$, if $\vector{b} \notin \ran(A)$,
the least squares problem (\ref{GMRES}) becomes very ill-conditioned. (The condition number of the Hessenberg matrix
arising in each iteration of GMRES becomes very large.)

If $A$ is a GP matrix, GMRES determines a solution of (\ref{lineq}) for all $\vector{b}\in \ran(A)$ and 
for all initial vector $\vector{x}_{0}$ (\cite[Theorem 2.6]{BW}). Moreover,  
GMRES determines a solution of (\ref{lineq}) for all $\vector{b}\in \ran(A)$ and 
for all initial vector $\vector{x}_{0}$ if and only if $A$ is a GP matrix (\cite{HS}, Theorem 2.8).
Even if $A$ is a GP matrix and $\vector{b} \in \ran(A)$, 
it was reported in \cite{MR} that 
the least squares problem (\ref{GMRES})
can become very ill-conditioned before breakdown in finite precision arithmetic.
Furthermore, if $A$ is GP and is not EP, 
there exists $\vector{b} \notin \ran(A)$ such that
GMRES breaks down at iteration step $1$ without giving a least squares 
solution of (\ref{lstsq}) (\cite[Theorem 2.6]{HS}).

If ${\rm index}(A) > 1$, there exists a $\vector{b} \in \ran(A)$ such that GMRES breaks down at iteration step 1 without
giving a solution of (\ref{lineq}) (\cite[Theorem 2.8]{HS}). 

In general,
when $\ran(A^{\rm T}) \ne \ran(A)$, there exist $\vector{x}_{0}$ and 
$\vector{b}$ such that 
GMRES breaks down without giving a least squares solution of (\ref{lstsq}) (\cite{BW, HS}).

In this paper, we propose using right preconditioning 
to overcome the difficulty of solving the least squares problems 
(\ref{lstsq}) with coefficient matrices 
which are not EP, including the case when the
index is greater than or equal to 1 \cite{SH}.
Furthermore, we propose using RRGMRES instead of GMRES since RRGMRES is more stable compared to GMRES
for inconsistent range-symmetric systems.
Finally, we propose applying the NR-SSOR inner iteration as the right preconditioner.
The proposed method also works well for (underdetermined) least squares problems.

RRGMRES determines a least squares solution 
of $A\vector{x} = \vector{b}$
without breakdown for arbitrary $\vector{b} \in \rn$,
and initial iterate $\vector{x}_0 \in \rn$
where $A$ may be singular 
if $\ran(A) = \ran(A^{\rm {T}})$, and furthermore this 
least squares solution is also the minimum-norm solution if
$\vector{x}_0 \in \ran(A)$. \cite{CLR}. 
We also describe the convergence theory for RRGMRES in Appendix \ref{rrnonsg} and \ref{rrsg}.

\section{Right preconditioned GMRES and RRGMRES for arbitrary singular systems
and rectangular least squares problems}\label{RpreGrr}
For the least squares problem (\ref{lstsq2}) 
where $ A \in \rmn $, Hayami, Yin and Ito \cite{HYI} proposed the AB-GMRES method which applies GMRES to (\ref{AB})
where
$ B \in \rnm $ satisfies $ \ran (AB) = \ran (A) $.
Note that 
$\displaystyle  \min_{\bz \in \Rm} \| \bb -AB \bz \|_2 = \min_{\bx \in \rn} \| \bb -A \bx \|_2$  holds
for all $\vector{b} \in \Rm$ if and only if $ \ran(A) = \ran (AB) $ (\cite{HYI}, Theorem 3.1).
AB-GMRES is the right preconditioned GMRES using $B$ as the right preconditioner.
If $B=CA^{\rm T} $, where $ C \in \rnn $ is symmetric positive definite, then $ \ran(A) = \ran (AB) $ 
holds (\cite{HYI}, Lemma 3.4).
Note also that $\ran ((AB)^{\rm T}) = \ran(AB) $, i.e. $ AB $ becomes 
range-symmetric, so that the AB-GMRES determines a least 
squares solution of (\ref{AB}) without breakdown for arbitrary $ \bb \in \Rm$ \cite{SH}. 

Note also that $B=CA^{\rm T}$ can be regarded as a right preconditioner.
Using this right preconditioning, we transform (\ref{lstsq}), where index($A$) is greater than or equal to 1 for $A \in \rnn$, 
to a least squares problem (\ref{AB}) with $ B=CA^{\rm T} $, where 
$ C \in \rnn $ is symmetric positive definite.
Since $\ran(A) = \ran (AB)$ holds
and $AB$ is range-symmetric, AB-GMRES determines a least squares solution 
$\vector{z}$ of (\ref{AB}) without breakdown
and $\vector{x} = B\vector{z}$ is a least squares solution of ($\ref{lstsq2}$).
However, when $ \bb \notin \ran(A) $, even when $\ran ( A^{\rm T} ) = \ran(A)$, the least squares problem (\ref{GMRES}) in GMRES may 
become dangerously ill-conditioned before a least squares solution of  (\ref{lstsq}) is obtained (\cite[Theorem 2.5]{BW}).
To overcome this problem, we proposed using pseudoinverse and used pinv in MATLAB 
for computing the pseudoinverse in \cite{SHL}, and further proposed using pinv with a larger
and appropriate threshold than the default value in MATLAB in \cite{SH}.
However, the computational cost of pinv is large. 
Thus, in this paper, we propose using RRGMRES instead of GMRES
since RRGMRES is more stable compared to GMRES
for inconsistent range-symmetric systems. 
This is because the smallest singular value of the Hessenberg matrix $H_{k+1,k}$
arising in RRGMRES at the $k$th step is bounded below by the 
$r$th 
largest singular value of 
the coefficient matrix $A$ with $\rank A = r$ as shown in (\ref{sgHbd}) 
of Appendix \ref{rrsg}. See also \cite{MR}.
This does not hold with GMRES \cite{BW, MR}.
The computational cost of RRGMRES 
is almost the same as that of GMRES. RRGMRES converges to a least squares solution for range-symmetric systems.
Furthermore, when the initial iterate $\vector{x}_0 \in \ran(A)$ holds,
this solution is a minimum norm solution. 

Using $B = CA^{\rm T}$ where $C$ is symmetric positive definite as a right 
preconditioner and $A \in \rmn$,
the right preconditioned RRGMRES also determines a least 
squares solution of (\ref{lstsq2}) without breakdown 
for arbitrary $ \bb \in \Rm$ and arbitrary $\vector{x}_0 \in \rn$
since $AB$ is range-symmetric and RRGMRES determines a least squares solution
for range-symmetric systems.

\section{NR-SSOR right preconditioned RRGMRES and its application to 
singular systems and rectangular least squares problems}\label{NRssApp}

We proposed using $C = I$ and $\{{\rm diag}(A^{\rm T} A)\}^{-1}$ as the right preconditioner $B = CA^{\rm T}$ in \cite{SH}.
In this section, we further propose using NR-SSOR inner iterations \cite{MH} as a more powerful right preconditioner 
and show that it can also be used for least squares problem (\ref{lstsq2}) with $A \in \rmn$.
Then, we prove that for the NR-SSOR preconditioner $B = CA^{\rm T}$, 
$C \in \rnn$ is symmetric positive definite.
Furthermore, we consider the case when $m = n$ or $m \neq n$ for $A \in \rmn$.

\subsection{NR-SSOR preconditioner}
NR-SSOR is mathematically equivalent to SSOR applied to
$A^{\rm T}A\vector{x} = A^{\rm T}\vector{b}$.
Then,
consider the first kind normal equations
\begin{equation}\label{fknq}
A^{\rm T}A\vector{x} = A^{\rm T}\vector{b}
\end{equation}
where $A \in \rmn$ and $\vector{b} \in \Rm$,
which is equivalent to the least squares problem (\ref{lstsq2}).

Let
\begin{equation}\label{ssform}
A^{\rm T}A  =  M - N \in \rnn
\end{equation} 
where $M$ is nonsingular, and 
\begin{eqnarray*}
H & = & M^{-1}N \\
  &=  & I - M^{-1}A^{\rm T}A
\end{eqnarray*}
Now consider the stationary iterative method applied to (\ref{fknq})
with $\vector{x}^{(0)} = \vector{0}$.
\begin{eqnarray*}
\vector{x}^{(\ell)} & = & H\vector{x}^{(\ell-1)} + M^{-1}A^{\rm T}\vector{b}\\
                 & = & H^{\ell}\vector{x}^{(0)} + \sum_{i=0}^{\ell-1}H^{i}M^{-1}A^{\rm T}\vector{b}\\
                 & = & \sum_{i=0}^{\ell-1}H^{i}M^{-1}A^{\rm T}\vector{b}
\end{eqnarray*}
Then, let $B^{(\ell)} = C^{(\ell)}A^{\rm T}$ be the preconditioner where
\begin{equation*}
C^{(\ell)} = \sum_{i=0}^{\ell-1}H^{i}M^{-1},
\end{equation*}
and
$\displaystyle H^{i}M^{-1}  =  (I - M^{-1}A^{\rm T}A)^{i}M^{-1}$.

If $M$ is symmetric, $\displaystyle H^{i}M^{-1}  =  (I - M^{-1}A^{\rm T}A)^{i}M^{-1}$ is symmetric. 
Therefore, if $M$ is symmetric, $C^{(\ell)}$ is symmetric.

We give the algorithm of NR-SSOR for the inner-iterations preconditioning
which works on the normal equations $A^{\rm T}A\vector{z} = A^{\rm T}\vector{c}$. 
Let $\vector{a}_j$ be the $j$th column of $A$.
\\ \\
{\bf Algorithm 3: NR-SSOR}\\ \\
Let $\vector{z}^{(0)} : = \vector{0}$ and $\vector{r} : = \vector{c}$. \\
For $ k = 1, 2, \cdots ,\ell,$ Do \\
\hspace{5mm}For $ j = 1, 2, \cdots ,n,$ Do \\
\hspace{10mm} $ d_j^{(k-\frac{1}{2})} := \omega(\vector{r},\vector{a}_j )/\|\vector{a}_j\|_2^2$,
$z_j^{(k-\frac{1}{2})} := z_j^{(k-1)} + d_j^{(k-\frac{1}{2})}$,
$\vector{r} := \vector{r} - d_j^{(k-\frac{1}{2})}\vector{a}_j$\\
\hspace{5mm}End do \\
\hspace{5mm}For $ j = n, n-1, \cdots ,1,$ Do \\
\hspace{10mm} $ d_j^{(k)} := \omega(\vector{r},\vector{a}_j )/\|\vector{a}_j\|_2^2$,
$z_j^{(k)} := z_j^{(k-\frac{1}{2})} + d_j^{(k)}$,
$\vector{r} := \vector{r} - d_j^{(k)}\vector{a}_j$\\
\hspace{5mm}End do \\
End do

For NR-SSOR,
\begin{equation*}
M = \omega^{-1}(2-\omega)^{-1}(D + \omega L)D^{-1}(D+\omega L^{\rm T}),
\end{equation*}
where $A^{\rm T}A =L + D + L^{\rm T}$,
$L$ is a strictly lower triangular matrix,
$D$ is a diagonal matrix, and $\omega$ is the relaxation parameter.
Thus, $M$ is symmetric. 

Here, we define the following \cite{MP}.
\begin{definition}\label{Semi}
A matrix $C$ is called semiconvergent if $\lim_{i \rightarrow \infty} C^i$ exists.
\end{definition}

Then, the iteration matrix $H = M^{-1}N$ for 
NR-SSOR with $0 < \omega < 2$ is semiconvergent \cite{DAX}. 
$M$ is nonsingular if $A$ has no zero columns. 
Furthermore, $M$ is positive definite if $0 < \omega < 2$.
Thus, $M$ is symmetric positive definite.
Therefore, $C^{(\ell)}$ is symmetric if $A$ has no zero columns 
and $0 < \omega < 2$ holds.

\subsection{Application to least squares problems}

The right preconditioned GMRES is applicable to both overdetermined and 
underdetermined problems. However the method is especially 
suited for underdetermined problems since GMRES works in a smaller dimension $m$ than $n$.

In this section, we prove that the corresponding $C$ for NR-SSOR right 
preconditioning is symmetric positive definite. Therefore, 
we demonstrate that NR-SSOR right preconditioned GMRES and RRGMRES 
determine a least squares solution $\vector{z}$ of (\ref{AB}) without breakdown
for arbitrary $\vector{b} \in \Rm$ and initial iterate $\vector{x}_0 \in \rn$,
and $\vector{x} = B\vector{z}$ is a least squares solution of ($\ref{lstsq2}$). 

Here, semiconvergence is algebraically characterized as follows.
\begin{theorem}(\cite{HEN}, \cite[Theorem 1]{OL}, 
\cite[Theorem 2]{TAN})\label{SemiAlg}
The following are equivalent:

\begin{enumerate}
\item $C$ is semiconvergent.
\item For any eigenvalue $\lambda$ of $C$, either
\begin{enumerate}
\item $|\lambda| < 1$ or
\item $\lambda = 1$ and ${\rm index}(I-C) =1$
\end{enumerate}
holds.
\end{enumerate}

\end{theorem}

Using NR-SSOR right preconditioning, the preconditioner is $B = CA^{\rm T}$
where $C$ is symmetric positive definite according to the following theorem.

\begin{theorem}\label{Cpos}
Assume that $A$ has no zero columns and $0 < \omega < 2$ holds.
Then, $\displaystyle C^{(\ell)} = \sum_{i=0}^{\ell-1}H^{i}M^{-1} $ is positive 
definite.
\end{theorem}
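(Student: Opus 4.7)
The plan is to reduce the question to a diagonal eigenvalue computation by exploiting that $M$ is symmetric positive definite, so $M^{1/2}$ is well defined and can be used to conjugate $H$ and $C^{(\ell)}$ simultaneously into a symmetric form. First I would set $S := M^{-1/2} A^\trans A\, M^{-1/2}$, which is symmetric positive semidefinite because $A^\trans A$ is symmetric positive semidefinite and $M^{-1/2}$ is symmetric nonsingular. Then $\tilde H := I - S = M^{1/2} H M^{-1/2}$ is symmetric and similar to $H$, and a direct computation using $M^{1/2} M^{-1} = M^{-1/2}$ gives $H^i M^{-1} = M^{-1/2} \tilde H^i M^{-1/2}$, whence
$$C^{(\ell)} = M^{-1/2}\Bigl(\sum_{i=0}^{\ell-1}\tilde H^i\Bigr) M^{-1/2}.$$
Since $M^{-1/2}$ is symmetric nonsingular, positive definiteness of $C^{(\ell)}$ is equivalent to that of $\sum_{i=0}^{\ell-1}\tilde H^i$.

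Next I would diagonalize $S = Q\Lambda Q^\trans$ with $\Lambda = \mathrm{diag}(\lambda_1,\ldots,\lambda_n)$, so that the eigenvalues of $\sum_{i=0}^{\ell-1}\tilde H^i$ are
$$\mu_j = \sum_{i=0}^{\ell-1}(1-\lambda_j)^i,\qquad j=1,\ldots,n.$$
Positivity of every $\mu_j$ will follow from the eigenvalue location $\lambda_j \in [0,2)$: when $\lambda_j = 0$, $\mu_j = \ell > 0$; when $0 < \lambda_j < 2$, we have $|1-\lambda_j| < 1$ and the geometric sum gives $\mu_j = (1-(1-\lambda_j)^\ell)/\lambda_j > 0$.

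The main obstacle, and the step that actually uses the hypotheses on $A$ and $\omega$, is establishing $\lambda_j < 2$. The positive semidefiniteness of $S$ only gives $\lambda_j \ge 0$, so a separate argument is needed to exclude $\lambda_j \ge 2$. I would argue this from the semiconvergence of $H$ stated just above the theorem (cited from Dax): since $A$ has no zero columns and $0 < \omega < 2$, $\lim_{k\to\infty} H^k$ exists, which forces every eigenvalue of $H$ either to equal $1$ or to lie strictly inside the unit disk. Because $\tilde H$ is similar to $H$ and has real eigenvalues $1-\lambda_j$, this yields $1-\lambda_j \in (-1,1]$, i.e.\ $\lambda_j \in [0,2)$, as required. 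The value $\lambda_j = 2$, which would correspond to $-1$ as an eigenvalue of $\tilde H$, is precisely the case semiconvergence rules out.
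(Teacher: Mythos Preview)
Your proof is correct and rests on the same two ingredients as the paper's: the $M^{1/2}$-conjugation that makes the relevant operators symmetric, and the semiconvergence of $H$ to locate the eigenvalues. The organization differs, however. The paper first passes to the Jordan form of $I-H$, argues that the nonzero eigenvalues $\lambda$ of $I-H$ satisfy $\lambda>0$ and that the eigenvalues of $P=\sum_{i=0}^{\ell-1}H^i$ are positive, and only then invokes the symmetry of $M^{1/2}PM^{-1/2}$ to conclude positive definiteness of $C^{(\ell)}=PM^{-1}$. You instead symmetrize at the outset by setting $\tilde H=M^{1/2}HM^{-1/2}=I-S$ with $S=M^{-1/2}A^\trans A\,M^{-1/2}$ symmetric positive semidefinite, which lets you bypass Jordan form entirely and reduce everything to the spectral theorem for $S$. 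This is a genuine simplification: it makes the reality of the eigenvalues and the bound $\lambda_j\ge 0$ immediate, and the semiconvergence argument is then only needed to exclude $\lambda_j\ge 2$. The paper's route, by contrast, has to assert $\lambda>0$ for the Jordan eigenvalues without first making explicit that they are real.
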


\begin{proof}
Let $J = S^{-1}(I-H)S$ be the Jordan canonical form of $I-H$.
Here, the iteration matrix $H = M^{-1}N$ for NR-SSOR with $0 < \omega < 2$
is semiconvergent when $A$ has no zero columns.
Then, ${\rm index}(I-H) = {\rm index}(J) \leq 1$.
Without loss of generality, we denote
$J$ by $J ={\rm diag}(\tilde{J}, 0_{n-r})$, where $r =\rank A$,
$\tilde{J}$ has no eigenvalues equal to zero, and $0_{n-r}$ is a zero matrix of size
$n-r$. 
Here, the eigenvalues of $H = I - M^{-1}A^{\rm T}A$ are real numbers
since $M$ is symmetric positive definite when 
$A$ has no zero columns and $0 < \omega < 2$.
Let $\nu$ be an eigenvalue of $H$ such that $|\nu| < 1$, and 
let $\lambda = 1 - \nu$ be the corresponding nonzero eigenvalue of $I-H$. 
Then, $\lambda > 0$.
The corresponding eigenvalue of $I_{r} - (I_{r} - \tilde{J})^{\ell}$
is $\mu = 1 - (1 - \lambda)^{\ell}$ for all $\ell \geq 1$,
\begin{eqnarray*}
\mu & = & 1 - (1 - \lambda)^{\ell} \\
    & > & 1 - |1-\lambda|^{\ell} \\
    & > & 0
\end{eqnarray*}
since $|1 - \lambda| < 1$.

The eigenvectors of $[I_{r} - (I_{r} - \tilde{J})^{\ell}]$ are the same
as the eigenvectors of $\tilde{J}^{-1}$.
Further, the eigenvalues of $[I_{r} - (I_{r} - \tilde{J})^{\ell}]$ and the eigenvalues $\tilde{J}^{-1}$ are positive. 
Then, all eigenvalues of $[I_{r} - (I_{r} - \tilde{J})^{\ell}] \tilde{J}^{-1}$ are positive.  

Thus, 
all eigenvalues of $S {\rm diag}\{[I_{r} - (I_{r} - \tilde{J})^{\ell}\}\tilde{J}^{-1}, {\ell} I_{n-r}\}S^{-1}$ 
are positive.

We define $P \in \rnn$ as follows.
\begin{eqnarray*}
P & = & \sum_{i=0}^{\ell-1} H^{i} \\
  & = & \sum_{i=0}^{\ell-1} (I - M^{-1}A^{\rm T}A)^{i} 
\end{eqnarray*} 
Here,
$P = S {\rm diag}\{[I_{r} - (I_{r} - \tilde{J})^{\ell}\}\tilde{J}^{-1}, {\ell} I_{n-r}\}S^{-1}$.

Since $\displaystyle M^{\frac{1}{2}}PM^{- \frac{1}{2}}$ is symmetric,
$\displaystyle M^{\frac{1}{2}}PM^{- \frac{1}{2}} = V \Sigma V^{\rm T}$
where $V \in \rnn$ is an orthogonal matrix and $\Sigma \in \rnn$ is a diagonal matrix.
Furthermore, the diagonal element of $\Sigma$ are the eigenvalues of 
$P$.

Here, all eigenvalues of $P$ are positive. 
Thus, all diagonal elements of $\Sigma$ are positive. 

For $\forall \vector{y} \in \rn$,
\begin{eqnarray*}
(PM^{-1}\vector{y}, \vector{y}) 
& = & (M^{- \frac{1}{2}}V \Sigma V^{\rm T} M^{- \frac{1}{2}} \vector{y}, \vector{y}) \\
              & = & (\Sigma V^{\rm T} M^{- \frac{1}{2}} \vector{y}, V^{\rm T} M^{- \frac{1}{2}}\vector{y}).
\end{eqnarray*}

Here, 
$\vector{v} = (v_{1}, v_{2},...,v_{n}) \in \rn$ is defined as follows.
\begin{equation*}
\vector{v} = V^{\rm T} M^{- \frac{1}{2}} \vector{y}.
\end{equation*}
Furthermore, $\sigma_{i}$ is defined as the $(i,i)$ element of $\Sigma$.

Then,
\begin{eqnarray*}
(PM^{-1}\vector{y}, \vector{y}) & = & 
(\Sigma V^{\rm T} M^{- \frac{1}{2}} \vector{y}, V^{\rm T} M^{- \frac{1}{2}}\vector{y}) \\
              & = & \sum_{i=1}^{n} \sigma_{i} v_{i}^{2}.
\end{eqnarray*}
Since $\sigma_{i}(1 \leq i \leq n)$ is positive and $v_{i} (1 \leq i \leq n)$ are real numbers,
$(PM^{-1}\vector{y}, \vector{y})$ is positive.
Since $PM^{-1}$ is symmetric,
$PM^{-1}$ is positive definite.

Since 
\begin{eqnarray*}
C^{(\ell)} & = & S {\rm diag}\{[I_{r} - (I_{r} - \tilde{J})^{\ell}\}\tilde{J}^{-1}, {\ell} I_{n-r}\}S^{-1}M^{-1},
\end{eqnarray*}
$C^{(\ell)} = PM^{-1}$ is positive definite.
\end{proof} 

Therefore, for the NR-SSOR right preconditioner $B = CA^{\rm T}$, $C \in \rnn$ is symmetric positive definite if $A$ has no zero columns 
and $0 < \omega < 2$ holds.
Then, since $AB$ is symmetric, $AB$ is range-symmetric.
On the other hand,
the right preconditioned GMRES with $B = A^{\rm T}C$ using stationary inner iterations converges to a solution 
of $A\vector{x}= \vector{b}$ for all $\vector{b} \in \ran(A)$ and all 
initial iterate $\vector{x}_0 \in \rn$ (\cite{MH}, Theorem 5.5, 5.6).
However, the right preconditioned GMRES and RRGMRES with $B = A^{\rm T}C$ 
using stationary inner iterations do not necessarily 
determine a least 
squares solution of (\ref{AB}) for arbitrary $ \bb \in \Rm$.
since $AB$ is not necessarily range-symmetric.
On the other hand, the
NR-SSOR right preconditioned GMRES and RRGMRES determines a least 
squares solution of (\ref{AB}) for arbitrary $ \bb \in \Rm$ since $AB$ 
for NR-SSOR right preconditioning is range-symmetric.

Next, we will investigate the distribution of the eigenvalues of $AC^{(\ell)}A^{\rm T}$ 
where $C^{(\ell)}A^{\rm T}$ is the preconditioned matrix by $\ell$ NR-SSOR inner iterations.


According to \cite{DAX}, the iteration matrix $H$ for NR-SSOR with $0 < \omega < 2$ is semiconvergent.

Let $B^{(\ell)} = C^{(\ell)}A^{\rm T}$. Then, 
the following theorem holds \cite{MH}.

\begin{theorem}(\cite[Theorem 4.8]{MH})\label{MHeig}
Let $r = {\rm rank} A$. Assume that $H$ is semiconvergent. Then, there exists $r$ eigenvalues of 
of $B^{(\ell)}A$ in a disk with a center at $1$ and radius $\rho(H)^{\ell} < 1$, and
the remaining $n - r$ eigenvalues are zero.
\end{theorem}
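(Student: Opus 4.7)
The plan is to reduce the spectrum of $B^{(\ell)}A$ to that of $H^{\ell}$ via a single telescoping identity, then read off the eigenvalues using the structure of a semiconvergent matrix.

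First, I would express $B^{(\ell)}A$ as a polynomial in $H$. Using the definition $C^{(\ell)} = \sum_{i=0}^{\ell-1} H^{i}M^{-1}$ and the identity $M^{-1}A^{\trans}A = I - H$ (obtained from $A^{\trans}A = M - N$ and $H = M^{-1}N$), a telescoping sum gives
\begin{equation*}
B^{(\ell)}A \;=\; C^{(\ell)}A^{\trans}A \;=\; \sum_{i=0}^{\ell-1} H^{i}(I - H) \;=\; I - H^{\ell}.
\end{equation*}
Hence the eigenvalues of $B^{(\ell)}A$ are exactly $\{\, 1 - \mu^{\ell} : \mu \in \sigma(H)\,\}$, counted with algebraic multiplicity, and it suffices to analyse $\sigma(H)$.

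Next, I would use the semiconvergence hypothesis. Semiconvergence of $H$ (\cite{DAX}) forces $\rho(H) \leq 1$, forces any eigenvalue of modulus $1$ to equal $1$, and forces every Jordan block of $H$ at the eigenvalue $1$ to have size exactly $1$. The eigenspace at $\mu = 1$ is $\nul(I-H) = \nul(M^{-1}A^{\trans}A) = \nul(A^{\trans}A) = \nul(A)$, which has dimension $n-r$. Because the Jordan blocks at $1$ are trivial, the algebraic multiplicity of $1$ also equals $n-r$, so the remaining $r$ eigenvalues $\mu_{1},\ldots,\mu_{r}$ (with multiplicity) satisfy $|\mu_{j}|<1$. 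Combining this with the identity above, the $n-r$ eigenvalues of $B^{(\ell)}A = I - H^{\ell}$ coming from $\mu = 1$ all equal $0$, while each remaining eigenvalue $1 - \mu_{j}^{\ell}$ lies at distance $|\mu_{j}|^{\ell}\leq \rho(H)^{\ell}$ from $1$, which is exactly the claimed disk.

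The only subtle point is the meaning of $\rho(H)^{\ell} < 1$: a semiconvergent $H$ with $1\in\sigma(H)$ has ordinary spectral radius equal to $1$, so the inequality must be read with $\rho(H)$ denoting the largest modulus among the non-unit eigenvalues, i.e.\ the damping radius in the convention of \cite{MH, DAX}. Equivalently, since the Jordan blocks at $\mu = 1$ are all of size one, $\rn$ splits as an $H$-invariant direct sum $\nul(A) \oplus V$, and one may apply the bound to the restriction $H|_{V}$, whose ordinary spectral radius is genuinely less than $1$. Making this convention explicit is the main care point; once the identity $B^{(\ell)}A = I - H^{\ell}$ is in hand, the rest of the argument is routine.
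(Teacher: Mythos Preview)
Your argument is correct, but there is nothing in the paper to compare it against: the paper does not prove this statement at all. Theorem~\ref{MHeig} is quoted directly from \cite{MH} (Theorem~4.8) and used as a black box in the proofs of Lemma~\ref{ABeig} and Theorem~\ref{abeg}. Your telescoping identity $B^{(\ell)}A = C^{(\ell)}A^{\trans}A = \sum_{i=0}^{\ell-1}H^{i}(I-H) = I - H^{\ell}$, followed by the spectral mapping theorem and the Jordan structure of a semiconvergent matrix, is the standard and cleanest route; it is also in the same spirit as the Jordan-form computation the paper \emph{does} carry out in the proof of Theorem~\ref{Cpos}, where the matrix $P = \sum_{i=0}^{\ell-1}H^{i}$ is analysed via the canonical form $J = {\rm diag}(\tilde{J},0_{n-r})$ of $I-H$.

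You are also right to flag the meaning of the inequality $\rho(H)^{\ell} < 1$. When $r<n$ the eigenvalue $1$ lies in $\sigma(H)$ and the ordinary spectral radius of $H$ equals $1$, so the bound only makes sense if $\rho(H)$ is read as the largest modulus among the eigenvalues of $H$ different from $1$ (equivalently, the spectral radius of the restriction $H|_{V}$ to the $H$-invariant complement of $\nul(A)$). This is exactly the convention implicit in \cite{MH} and in the paper's own Theorem~\ref{abeg}, whose conclusion that the nonzero eigenvalues approach $1$ as $\ell\to\infty$ would otherwise be vacuous. Making the convention explicit, as you do, is the only real care point.
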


The following lemma holds.

\begin{lemma}\label{ABeig}
Let $r = {\rm rank} A$.
Assume that $A$ has no zero columns and $0 < \omega < 2$ holds.
Let $\displaystyle C^{(\ell)} = \sum_{i=0}^{\ell-1}H^{i}M^{-1} $ for $\ell$ NR-SSOR inner iterations.
Then, $A C^{(\ell)}A^{\rm T}$ and $C^{(\ell)}A^{\rm T}A$ have the same $r$ 
nonzero 
eigenvalues which are real.
\end{lemma}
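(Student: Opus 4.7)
The plan is to combine Theorem \ref{Cpos} with two elementary facts from linear algebra: (i) for any compatibly shaped matrices $X$ and $Y$, the matrices $XY$ and $YX$ have the same nonzero eigenvalues (with equal algebraic multiplicities); and (ii) a real symmetric positive semidefinite matrix has only real, non-negative eigenvalues.

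First, I would invoke Theorem \ref{Cpos} to conclude that, under the hypotheses $A$ has no zero columns and $0 < \omega < 2$, the matrix $C^{(\ell)}$ is symmetric positive definite. This allows a factorization
\begin{equation*}
C^{(\ell)} = L L^\trans,
\end{equation*}
with $L \in \rnn$ nonsingular (for instance, the Cholesky factor, or $L = (C^{(\ell)})^{1/2}$). Substituting this,
\begin{equation*}
A C^{(\ell)} A^\trans = (AL)(AL)^\trans,
\end{equation*}
which is manifestly symmetric positive semidefinite, so all its eigenvalues are real and non-negative. Its rank equals $\rank(AL) = \rank A = r$ (since $L$ is nonsingular), so it has exactly $r$ positive eigenvalues and $n - r$ zero eigenvalues.

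Second, I would apply the swap identity with $X := A$ and $Y := C^{(\ell)} A^\trans$, both in $\rnn$. The identity yields that $XY = A C^{(\ell)} A^\trans$ and $YX = C^{(\ell)} A^\trans A$ have the same nonzero eigenvalues, counted with multiplicity. Combined with the previous paragraph, this means that $C^{(\ell)} A^\trans A$ also has exactly $r$ nonzero eigenvalues, which coincide with those of $A C^{(\ell)} A^\trans$ and are therefore real (in fact positive). As a consistency check, Theorem \ref{MHeig} independently asserts that $B^{(\ell)} A = C^{(\ell)} A^\trans A$ has precisely $r$ nonzero eigenvalues, which matches.

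I do not expect any substantive obstacle: the hard analytical work (positive definiteness of $C^{(\ell)}$) has already been carried out in Theorem \ref{Cpos}, and everything else is a direct consequence of standard linear algebra. The only care required is to invoke the $XY$/$YX$ eigenvalue identity for the nonzero spectrum (not the full spectrum, since the zero-eigenvalue multiplicities differ when the two products have different sizes — not an issue here, but worth stating cleanly).
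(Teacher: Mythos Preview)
Your proposal is correct and follows essentially the same approach as the paper. The paper likewise uses the square-root factorization of $C^{(\ell)}$ (writing $\tilde A = A\,(C^{(\ell)})^{1/2}$ so that $AC^{(\ell)}A^\trans = \tilde A\tilde A^\trans$) to obtain reality of the spectrum, and it establishes the equality of nonzero eigenvalues by the same $XY$/$YX$ mechanism, only carried out explicitly via eigenvector transfers rather than cited as a standard fact; the one minor difference is that the paper reads off the count $r$ from Theorem~\ref{MHeig}, whereas you obtain it directly from $\rank(AL)=\rank A=r$ (note that $AC^{(\ell)}A^\trans$ is $m\times m$, so the zero eigenvalue has multiplicity $m-r$, not $n-r$).
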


\begin{proof}
Let $\by \in \rn$ be an eigenvector corresponding to a nonzero eigenvalue $\lambda$ of $C^{(\ell)}A^{\rm T}A$.
That is, $\displaystyle C^{(\ell)}A^{\rm T}A\by = \lambda \by$.
Then, $\displaystyle A C^{(\ell)}A^{\rm T}A\by = \lambda A\by$.
Here, $A\by \in \Rm$ is a nonzero vector since $\lambda$ is nonzero.
Hence, $A\by \in \Rm$ is an eigenvector corresponding to a nonzero eigenvalue $\lambda$ of $A C^{(\ell)}A^{\rm T}$.

$C^{(\ell)}$ is a symmetric positive definite matrix. 
Define $\tilde{A} = A{C^{(\ell)}}^{\frac{1}{2}}$.
Then,\\ $A C^{(\ell)}A^{\rm T} = \tilde{A}{\tilde{A}}^{\rm T}$.
Let $\bz \in \Rm$ be an eigenvector corresponding to a nonzero eigenvalue $\nu$ of $AC^{(\ell)}A^{\rm T}$.
That is, $\displaystyle \tilde{A}{\tilde{A}}^{\rm T}\bz = \nu \bz$.
Then, $\displaystyle {\tilde{A}}^{\rm T}\tilde{A}{\tilde{A}}^{\rm T}\bz = \nu {\tilde{A}}^{\rm T}\bz$.
Here, ${\tilde{A}}^{\rm T}\bz \in \rn$ is a nonzero vector since $\nu$ is nonzero.
Let $\bw = {\tilde{A}}^{\rm T}\bz$. 
Then, $\displaystyle {\tilde{A}}^{\rm T}\tilde{A}\bw = \nu \bw$.
Here the eigenvalues of ${\tilde{A}}^{\rm T}\tilde{A} = {C^{(\ell)}}^{\frac{1}{2}}A^{\rm T}A{C^{(\ell)}}^{\frac{1}{2}}$
are equal to the eigenvalues of $C^{(\ell)}A^{\rm T}A$.
Then, the eigenvalue $\nu$ of $AC^{(\ell)}A^{\rm T}$ is an eigenvalue of $C^{(\ell)}A^{\rm T}A$.

Furthermore,  $C^{(\ell)}A^{\rm T}A$ has $r$ nonzero eigenvalues from Theorem \ref{MHeig} (\cite{MH}, Theorem 4.8).
Then, $A C^{(\ell)}A^{\rm T}$ and $C^{(\ell)}A^{\rm T}A$ have the same $r$
nonzero eigenvalues.   

Since $C^{(\ell)}$ is symmetric positive definite, all eigenvalues of $A C^{(\ell)}A^{\rm T}$ are real.
Then, all eigenvalues of $C^{(\ell)}A^{\rm T}A$ are also real.
\end{proof}

From Theorem \ref{MHeig} (\cite[Theorem 4.8]{MH}) and Lemma \ref{ABeig},
the following theorem holds.

\begin{theorem}\label{abeg}
Assume that $A$ has no zero columns and $0 < \omega < 2$ holds. Let $r = {\rm rank} A$,
$M = \omega^{-1}(2 - \omega)^{-1}(D + \omega L)D^{-1}(D + \omega L^{\rm T})$,
$H = I - M^{-1}A^{\rm T}A$ and
$\displaystyle C^{(\ell)} = \sum_{i=0}^{\ell-1}H^{i}M^{-1} $.

Then, when $\ell$ is even, there exist $r$ eigenvalues of 
$A C^{(\ell)}A^{\rm T}$
which are real numbers in the interval $[1 - \rho(H)^{\ell}, 1]$.

On the other hand,  when $\ell$ is odd, there exist $r$ eigenvalues of 
$A C^{(\ell)}A^{\rm T}$
which are real numbers in the interval $[1 - \rho(H)^{\ell}, 1 + \rho(H)^{\ell}]$.

For both cases when $\ell$ is even or odd,
the remaining $n - r$ eigenvalues are zero where $H = I - M^{-1}A^{\rm T}A$.
\end{theorem}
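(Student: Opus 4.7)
The plan is to reduce everything to computing the spectrum of $I - H^{\ell}$, bound that spectrum on the real line, and then transport the result from $C^{(\ell)}A^{\trans}A$ to $AC^{(\ell)}A^{\trans}$ via Lemma \ref{ABeig}. First I would derive the closed-form identity
\[
  C^{(\ell)}A^{\trans}A \;=\; \sum_{i=0}^{\ell-1} H^{i}(I-H) \;=\; I - H^{\ell},
\]
which follows immediately from $M^{-1}A^{\trans}A = I - H$ and telescoping. This reduces the problem to locating the eigenvalues of $H^{\ell}$.

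Next I would establish that every eigenvalue of $H$ is \emph{real} and lies in $(-1,1]$. Under the hypotheses ($A$ has no zero columns, $0<\omega<2$), $M$ is symmetric positive definite by the discussion preceding Theorem \ref{Cpos}, so $H = I - M^{-1}A^{\trans}A$ is similar (via $M^{1/2}$) to $I - M^{-1/2}A^{\trans}AM^{-1/2}$, which is symmetric with eigenvalues of the form $1 - \sigma$, $\sigma \geq 0$. Thus the eigenvalues of $H$ are real and at most $1$; combined with semiconvergence of $H$, they lie in $(-1,1]$, with eigenvalue $1$ of multiplicity $n-r$ (coming from $\nul(A^{\trans}A)$) and $r$ eigenvalues $\mu_i$ with $|\mu_i|\le \rho(H) < 1$.

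With this in hand, the $r$ nonzero eigenvalues of $I - H^{\ell}$ are exactly $1 - \mu_i^{\ell}$, and the parity split is now immediate: for even $\ell$, $\mu_i^{\ell}\in[0,\rho(H)^{\ell}]$, so $1-\mu_i^{\ell}\in[1-\rho(H)^{\ell},1]$; for odd $\ell$, $\mu_i^{\ell}\in[-\rho(H)^{\ell},\rho(H)^{\ell}]$, so $1-\mu_i^{\ell}\in[1-\rho(H)^{\ell},1+\rho(H)^{\ell}]$. Finally, by Lemma \ref{ABeig}, $AC^{(\ell)}A^{\trans}$ shares exactly these $r$ real nonzero eigenvalues with $C^{(\ell)}A^{\trans}A = I - H^{\ell}$, and the remaining $n-r$ eigenvalues are zero because $\rank(AC^{(\ell)}A^{\trans}) \le \rank A = r$ (or directly from Theorem \ref{MHeig} combined with Lemma \ref{ABeig}).

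The one step that is more than routine bookkeeping is the real-spectrum argument for $H$: Theorem \ref{MHeig} only places the eigenvalues in a \emph{disk} about $1$, so without using that $M$ is symmetric positive definite one cannot exclude complex $\mu_i^{\ell}$ and hence cannot obtain the sharp endpoint $1$ in the even-$\ell$ case. Everything else is a direct consequence of the closed-form identity, the parity of $\ell$, and the already-established Lemma \ref{ABeig}.
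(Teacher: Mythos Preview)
Your proof is correct and follows exactly the route the paper indicates: the paper does not give a standalone proof of Theorem~\ref{abeg} but simply asserts that it follows from Theorem~\ref{MHeig} and Lemma~\ref{ABeig}. Your telescoping identity $C^{(\ell)}A^{\trans}A = I - H^{\ell}$ is the content behind Theorem~\ref{MHeig}, and your transfer to $AC^{(\ell)}A^{\trans}$ is precisely Lemma~\ref{ABeig}.

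You in fact supply a step the paper's one-line justification leaves implicit. Theorem~\ref{MHeig} and Lemma~\ref{ABeig} together only place the $r$ nonzero eigenvalues on the real segment $[1-\rho(H)^{\ell},\,1+\rho(H)^{\ell}]$; to obtain the sharper upper endpoint $1$ in the even-$\ell$ case one needs the eigenvalues of $H$ itself (not just of $I-H^{\ell}$) to be real, so that $\mu_i^{\ell}\ge 0$. Your $M^{1/2}$-similarity argument showing $H$ is similar to the symmetric matrix $I - M^{-1/2}A^{\trans}AM^{-1/2}$ is exactly what is required here and is consistent with the structure already used in the proof of Theorem~\ref{Cpos}. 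One minor remark: when $A$ is rank-deficient, $H$ has eigenvalue $1$ and hence $\rho(H)=1$ in the usual sense; your bound $|\mu_i|\le \rho(H)$ on the $r$ eigenvalues with $\mu_i\ne 1$ remains valid, but the strict inequality $\rho(H)<1$ should be read in the sense of Theorem~\ref{MHeig} (i.e., as the spectral radius of $H$ restricted to the complement of the eigenvalue $1$).
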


Theorem \ref{abeg} shows that if $A$ has no zero columns and $0 < \omega < 2$, 
then the $r$ nonzero eigenvalues of $A C^{(\ell)}A^{\rm T}$ approach 1 as $\ell$ increases.

\subsection{minimum-norm solution}
The right preconditioned GMRES and RRGMRES solve $AB \vector{u} = \vector{b}$ so that 
$\vector{x} = B\vector{u} = CA^{\rm T}\vector{u}$.
The necessary and sufficient condition that the solution $\vector{x}$ is a minimum-norm solution 
is $\vector{x} \in \nul(A)^{\perp} = \ran(A^{\rm T})$.
If $C = I$, $\vector{x} \in \ran(A^{\rm T})$ holds since 
$\vector{x} = CA^{\rm T}\vector{u}$,
but if $C \neq I$, $\vector{x} \in \ran(A^{\rm T})$ does not necessarily hold.
Therefore, if $C = I$, the solution of the right preconditioned GMRES and 
RRGMRES
is the minimum-norm solution, but if $C \neq I$, the solution is not necessarily the minimum-norm solution.

\section{MINRES-QLP}\label{MINQLP}
For indefinite or singular symmetric systems, MINRES-QLP \cite{CPS} was proposed
by improving the performance of MINRES. 
For singular symmetric systems, MINRES determines a least squares solution. Especially,
for singular symmetric consistent systems, 
MINRES determines a min-norm solution if the initial iterate $\vector{x}_0 \in \ran(A)$.
On the other hand,  
for singular symmetric systems, MINRES-QLP determines a min-norm solution 
even if the systems are inconsistent.
Therefore, MINRES-QLP is more robust than MINRES for singular symmetric systems.

The right preconditioned matrix $ACA^{\rm T}$ in section \ref{RpreGrr} is symmetric 
since $C$ is symmetric positive definite.
Therefore, for the case when $C$ is an identity matrix, 
we will apply MINRES-QLP to $AA^{\rm T}\vector{z} = \vector{b}$ and compute the solution
$\vector{x} = A^{\rm T}\vector{z}$ for $A\vector{x} = \vector{b}$, and 
compare it with AB-RRGMRES when $C$ is an identity matrix in $B = CA^{\rm T} = A^{\rm T}$.

\section{Numerical experiments}
In the numerical experiments for inconsistent problems with GP and index 2 coefficient matrices 
$A$, we are mainly concerned with 
verifying
how small the residual of the proposed method can become.
The estimation of the cost performance 
of the proposed method is left for future work.
Therefore, the size of the test matrices in the numerical experiments for GP and index 2 inconsistent problems were kept small.

On the other hand, the size of the test matrices in the numerical experiments for underdetermined least squares 
problems is not small.

The numerical experiments of AB-RRGMRES, AB-GMRES 
and MINRES-QLP
were performed on 
a PC with 11th Gen Intel(R) Core(TM) i7-1165G7 2.80 GHz CPU, Ubuntu 20.04 LTS 
and double precision
floating arithmetic. 
All programs for the iterative methods in our tests were coded in Fortran 90
and were compiled by gfortran. Here, the version of gcc is 9.4.0.
For computing singular value decomposition of matrices, we used MATLAB R2018b.
We used the code from \cite{QLP} for the MINRES-QLP.

\subsection{GP inconsistent problems}\label{NumGP}

The GP matrix $A \in {\bf R}^{128 \times 128}$ is as follows.
\begin{eqnarray*}
\left[
\begin{array}{cc}
A_{11} & A_{12} \\
0                                 &  0                            
\end{array}
\right],
\end{eqnarray*}
where $A_{11}, A_{12} \in {\bf R}^{64 \times 64}$.
Here, assume $J_{k}(\lambda) \in {\bf R}^{k \times k} $ is a square matrix of the form
\begin{eqnarray*}
\left[
\begin{array}{cccc}
\lambda & 1 & 0 & 0\\
0      & \lambda & 1 & 0 \\
0      & 0      & ... & 1 \\
0      &  0     & 0   & \lambda\\                            
\end{array}
\right]
\end{eqnarray*}
where $\lambda \in \real$.

\begin{eqnarray*}
A_{11} & = & \left[
\begin{array}{cc}
W & 0 \\
0                                 &  D                            
\end{array}
\right],
\end{eqnarray*}
where $W \in {\bf R}^{32 \times 32}$ is 
\begin{eqnarray*}
\left[
\begin{array}{cccc}
J_{2}(\alpha_{1}) & 0 & 0 & 0 \\
0                 &  J_{2}(\alpha_{2}) & 0 & 0 \\
0                 &        0           & ... & 0 \\  
0                 &        0           & 0 & J_{2}(\alpha_{16})  
\end{array}
\right]
\end{eqnarray*}
and $D$ is a diagonal matrix whose $j$th diagonal element is $\beta_{j}$.

Here, $\alpha_{j}(j=1,2,...16) \in \real$, $\beta_{i}(i=1,2,...,32) \in \real$ are
as follows, respectively 
\begin{eqnarray}
\alpha_{1} = 1 , \alpha_{16} = 10^{-\rho},~ \alpha_{j} 
= \alpha_{16} + \frac{16-j}{15}(\alpha_{1} - \alpha_{16})\times 0.7^{j-1} \label{alpeq} \\
\beta_{1} = 1 , \beta_{32} = 10^{-\gamma},~ \beta_{i} 
= \beta_{32} + \frac{32-i}{31}(\beta_{1} - \beta_{32})\times 0.2^{i-1} \label{beteq}
\end{eqnarray}
Furthermore, $A_{12} \in {\bf R}^{64 \times 64}$ is
\begin{eqnarray*}
\left[
\begin{array}{cccc}
J_{2}(\beta_{1}) & 0 & 0 & 0 \\
0                 &  J_{2}(\beta_{2}) & 0 & 0 \\
0                 &        0           & ... & 0 \\  
0                 &        0           & 0 & J_{2}(\beta_{32})  
\end{array}
\right]
\end{eqnarray*}

For the above matrix $A$, let $C = \{{\rm diag}(A^{{\rm T}}A)\}^{-1} \in \rnn$.
Then, $C$ is symmetric positive definite.
In this section, let both $\rho = \gamma = 12$.

Table \ref{gpinfo} gives information on this GP matrix.
The condition number was computed by dividing the largest singular value by the smallest nonzero
one, where the singular values were determined by using the MATLAB function {\bf svd} and 
the number of nonzero singular values was determined by using the MATLAB function {\bf rank}.
In Table \ref{gpinfo}, Rank is the maximum number of linearly independent columns of the matrix $A$
and $\kappa(A)$ is the condition number $\sigma_1/\sigma_r$ of the matrix, 
where $\sigma_1$ and $\sigma_r$ are the largest and $r$th largest singular values of the matrix, respectively, 
and $r$ is the rank.

\begin{table}[htbp]
\centering
\caption{Characteristics of the coefficient matrices of the GP problems}
\label{gpinfo}
\begin{tabular}{|r|r|}
\hline
Rank & $\kappa(A)$   \\
\hline
\hline
64 & {\rm $2.29 \times 10^{12}$}  \\
\hline
\end{tabular}
\end{table}

The right hand side vector $\vector{b}$ was set as 
$\vector{b} = \frac{A \times (1,1,.,1)^{{\rm T}}}{\|A \times (1,1,...,1)^{{\rm T}}\|_{2}}
 + \frac{\vector{u}(0,1)}{\|\vector{u}(0,1)\|_{2}}\times 0.01$
, where $\vector{u}(0,1)$ 
is an $n$ dimensional vector of pseudorandom numbers generated according to the uniform 
distribution in the interval [0,1].
Thus, the systems are generically inconsistent.



For AB-RRGMRES using NR-SSOR,
we determined the number of inner iterations $\ell$
as the smallest 
among 1,2,4,6,8,10 such that
the minimum value of
$\displaystyle 
\frac{\|A^{{\rm T}}\vector{r}_{k}\|_{2}}{\|A^{{\rm T}}\vector{b}\|_{2}}$ becomes smaller than $10^{-13}$
when the relaxation parameter $\omega $ is $1.0$.
Next, fixing $\ell$, we determined the optimal $\omega$ so that the method minimizes
$\displaystyle 
\frac{\|A^{{\rm T}}\vector{r}_{k}\|_{2}}{\|A^{{\rm T}}\vector{b}\|_{2}}$
among $\omega = 1.8,1.6,1.4,1.2,1.1,1.0,0.9,0.8,0.6,0.4,0.2$.
The parameters for AB-GMRES using NR-SSOR were determined similarly.
As a result, we chose $\omega = 1.0$ and $\ell = 1$
for AB-RRGMRES and $\omega = 0.9$ and $\ell = 1$
for AB-GMRES.

Fig. \ref{ag_i} 
shows $\displaystyle \frac{\|A^{{\rm T}}\vector{r}_{k}\|_{2}}{\|A^{{\rm T}}\vector{b}\|_{2}}$ versus 
the number of iterations $k$ for RRGMRES and GMRES.
applied to $A\vector{x}=\vector{b}$ directly without any preconditioning.

Fig. \ref{acatg_i}
shows $\displaystyle \frac{\|A^{{\rm T}}\vector{r}_{k}\|_{2}}{\|A^{{\rm T}}\vector{b}\|_{2}}$ versus 
the number of (outer) iterations $k$ for AB-RRGMRES using NR-SSOR with 1 inner iteration
and the relaxation parameter $\omega = 1.0$, 
$B=\{{\rm diag}(A^{\rm T}A)\}^{-1} A$ 
and $B = A^{\rm T}$, respectively. 

Fig. \ref{abgmrrg_i}
shows $\displaystyle \frac{\|A^{{\rm T}}\vector{r}_{k}\|_{2}}{\|A^{{\rm T}}\vector{b}\|_{2}}$ versus 
the number of iterations $k$ for AB-RRGMRES using NR-SSOR with 1 inner iteration
and the relaxation parameter $\omega = 1.0$
and AB-GMRES using NR-SSOR with 1 inner iteration
and $\omega = 0.9$.

\begin{figure}[htbp]
\begin{center}
\includegraphics[scale=0.55]{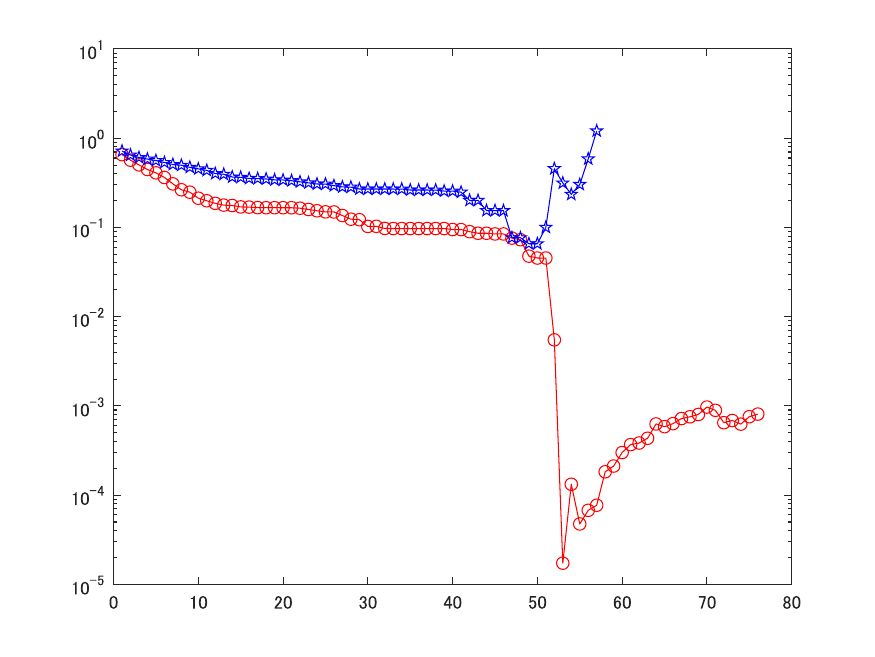} 
\caption{$\displaystyle \frac{\|A^{{\rm T}}\vector{r}_{k}\|_{2}}{\|A^{{\rm T}}\vector{b}\|_{2}}$ versus 
the number of iterations for RRGMRES (blue, $\star$) and 
GMRES (red, $\circ$) without preconditioning for the GP inconsistent system}
\label{ag_i}
\end{center}
\end{figure}

\begin{figure}[htbp]
\begin{center}
\includegraphics[scale=0.55]{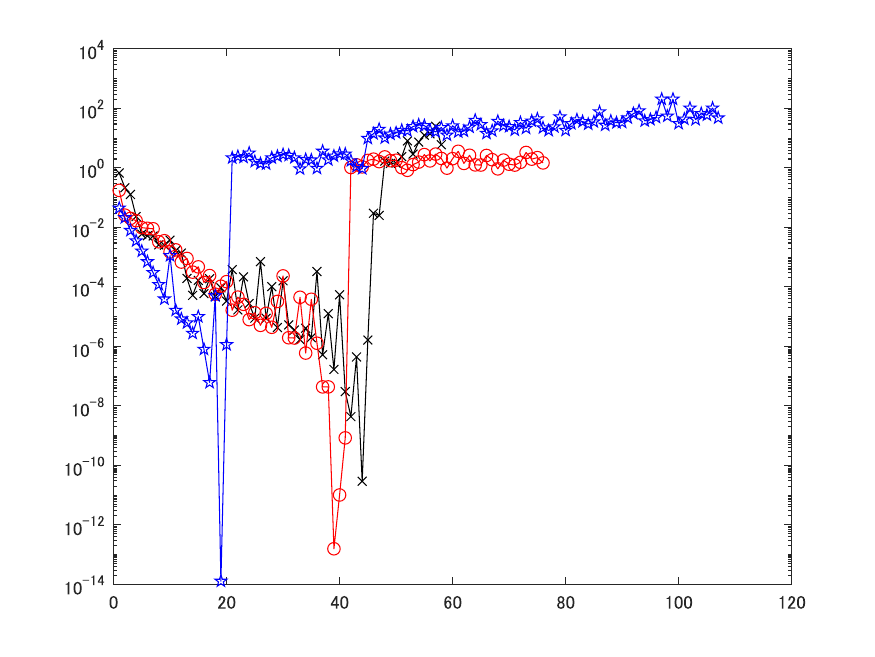} 
\caption{$\displaystyle \frac{\|A^{{\rm T}}\vector{r}_{k}\|_{2}}{\|A^{{\rm T}}\vector{b}\|_{2}}$ versus 
the number of (outer) iterations for  AB-RRGMRES using NR-SSOR
with 1 inner iteration and $\omega =1.0$ (blue, $\star$), 
$B=\{{\rm diag}(A^{\rm T}A)\}^{-1} A^{\rm T}$ (red, $\circ$), 
and $B=A^{\rm T}$ (black, $\times$) for the GP inconsistent system}
\label{acatg_i}
\end{center}
\end{figure}

\begin{figure}[htbp]
\begin{center}
\includegraphics[scale=0.55]{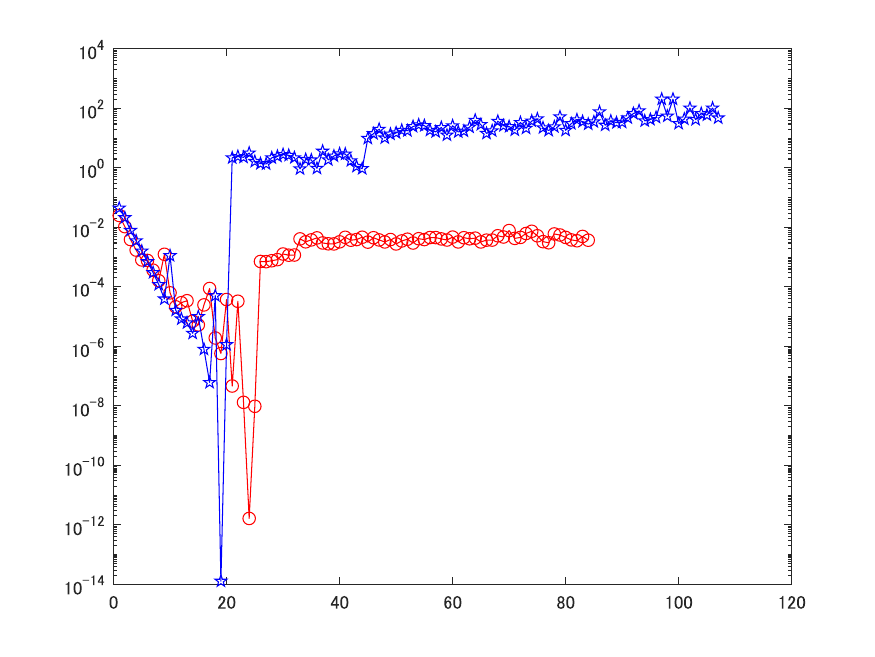}
\caption{$\displaystyle \frac{\|A^{{\rm T}}\vector{r}_{k}\|_{2}}{\|A^{{\rm T}}\vector{b}\|_{2}}$ versus 
the number of iterations for AB-RRGMRES using NR-SSOR
with 1 inner iteration and $\omega =1.0$ (blue, $\star$) and 
AB-GMRES using NR-SSOR
with 1 inner iteration and $\omega =0.9$ (red, $\circ$) 
for the GP inconsistent system}
\label{abgmrrg_i}
\end{center}
\end{figure}

We observe the following from Fig. \ref{ag_i}, \ref{acatg_i} and
\ref{abgmrrg_i}.

\begin{itemize}

\item Since the minimum value of 
$\frac{\|A^{\rm T}\vector{r}_{k}\|_{2}}{\|A^{\rm T}\vector{b}\|_{2}}$ for 
RRGMRES is larger 
than $10^{-2}$, RRGMRES does not converge to a least squares solution. The minimum value of 
$\frac{\|A^{\rm T}\vector{r}_{k}\|_{2}}{\|A^{\rm T}\vector{b}\|_{2}}$ for GMRES is more than $10^{-5}$, 
which is almost $10^{-4}$ times that of RRGMRES, 
but GMRES also does not converge to a least squares solution. 

\item The minimum value of $\frac{\|A^{\rm T}\vector{r}_{k}\|_{2}}{\|A^{\rm T}\vector{b}\|_{2}}$ for AB-RRGMRES 
using NR-SSOR with 1 inner iteration and $\omega =1$ 
is $10^{-14}$, which is almost $1/10$ of that using \\
$B=\{{\rm diag}(A^{\rm T}A)\}^{-1} A^{\rm T}$,
and $10^{-4}$ of $B = A^{\rm T}$.

\item The number of iterations when AB-RRGMRES using NR-SSOR with 1 inner iteration
and $\omega = 1.0$ attains the 
minimum value of 
$\frac{\|A^{\rm T}\vector{r}_{k}\|_{2}}{\|A^{\rm T}\vector{b}\|_{2}}$
is almost a half of that using $B=\{{\rm diag}(A^{\rm T}A)\}^{-1} A^{\rm T}$,
and $B = A^{\rm T}$

\item The minimum value of 
$\frac{\|A^{\rm T}\vector{r}_{k}\|_{2}}{\|A^{\rm T}\vector{b}\|_{2}}$ for 
AB-RRGMRES 
using NR-SSOR with 1 inner iteration and $\omega = 1$ is $10^{-2}$ times 
that of AB-GMRES using NR-SSOR with 1 inner iteration and $\omega = 0.9$.

\end{itemize}

\subsection{index 2 inconsistent problems}\label{NumId2}
The index $2$ matrix $A \in {\bf R}^{128 \times 128}$ is as follows.
\begin{eqnarray*}
\left[
\begin{array}{cc}
A_{11} & A_{12} \\
0                                 &  A_{22}                            
\end{array}
\right].
\end{eqnarray*}
where $A_{11}, A_{12}, A_{22} \in {\bf R}^{64 \times 64}$.
Here, $A_{11}, A_{12}, J_{k}(\lambda), \alpha, \beta$ are the same as those in Section \ref{NumGP}.
The $(2i -1 ,2i)(1 \leq i \leq 16)$ element of $A_{22}$ is $1$, and 
other elements are $0$. Thus, $A$ is an index $2$ matrix.

For the above matrix $A$, let $C \in \rnn$ be $C = \{{\rm diag}(A^{{\rm T}}A)\}^{-1}$.
Then, $C$ is symmetric positive definite.
In this section, let $\rho$ in (\ref{alpeq}) be $12$ and $\gamma$ in (\ref{beteq}) be $15$.

Table \ref{id2info} gives information on this index 2 matrix.
In Table \ref{id2info}, the definition of Rank and $\kappa(A)$ is the same as in Table \ref{gpinfo}.

\begin{table}[htbp]
\centering
\caption{Characteristics of the coefficient matrices of the index 2 problems}
\label{id2info}
\begin{tabular}{|r|r|}
\hline
Rank & $\kappa(A)$   \\
\hline
\hline
72 & {\rm $ 4.01 \times 10^{12}$}  \\
\hline
\end{tabular}
\end{table}

As with the GP inconsistent systems, the right-hand side vector $\vector{b}$ was set as \\
$\vector{b} = \frac{A \times (1,1,.,1)^{{\rm T}}}{\|A \times (1,1,...,1)^{{\rm T}}\|_{2}}
 + \frac{\vector{u}(0,1)}{\|\vector{u}(0,1)\|_{2}}\times 0.01$,
where $\vector{u}(0,1)$ 
is an $n$ dimensional vector of pseudorandom numbers generated according to the uniform 
distribution in the interval [0,1].


The optimal parameters were determined similarly to the GP problem in section \ref{NumGP}.
They are $\omega = 1.0$ and $\ell = 1$ for AB-RRGMRES using NR-SSOR,
and $\omega = 1.1$ and $\ell = 1$ for AB-GMRES using NR-SSOR.

Fig. \ref{aid2_i} 
shows $\displaystyle \frac{\|A^{{\rm T}}\vector{r}_{k}\|_{2}}{\|A^{{\rm T}}\vector{b}\|_{2}}$ versus 
the number of iterations $k$ for applying RRGMRES and GMRES directly to $A\vector{x} = \vector{b}$
without any preconditioning.
Fig. \ref{acatid2_i}
shows $\displaystyle \frac{\|A^{{\rm T}}\vector{r}_{k}\|_{2}}{\|A^{{\rm T}}\vector{b}\|_{2}}$ versus 
the number of iterations $k$ for AB-RRGMRES using NR-SSOR with 1 inner iteration and $\omega = 1.0$, 
$B=\{{\rm diag}(A^{\rm T}A)\}^{-1} A^{\rm T}$ 
and $B = A^{\rm T}$, respectively. 

Fig. \ref{abgmrrid2_i}
shows $\displaystyle \frac{\|A^{{\rm T}}\vector{r}_{k}\|_{2}}{\|A^{{\rm T}}\vector{b}\|_{2}}$ versus 
the number of iterations $k$ for AB-RRGMRES using NR-SSOR with 1 inner iteration
and $\omega = 1.0$
and AB-GMRES using NR-SSOR with 1 inner iteration
and $\omega = 1.1$.

\begin{figure}[htbp]
\begin{center}
\includegraphics[scale=0.55]{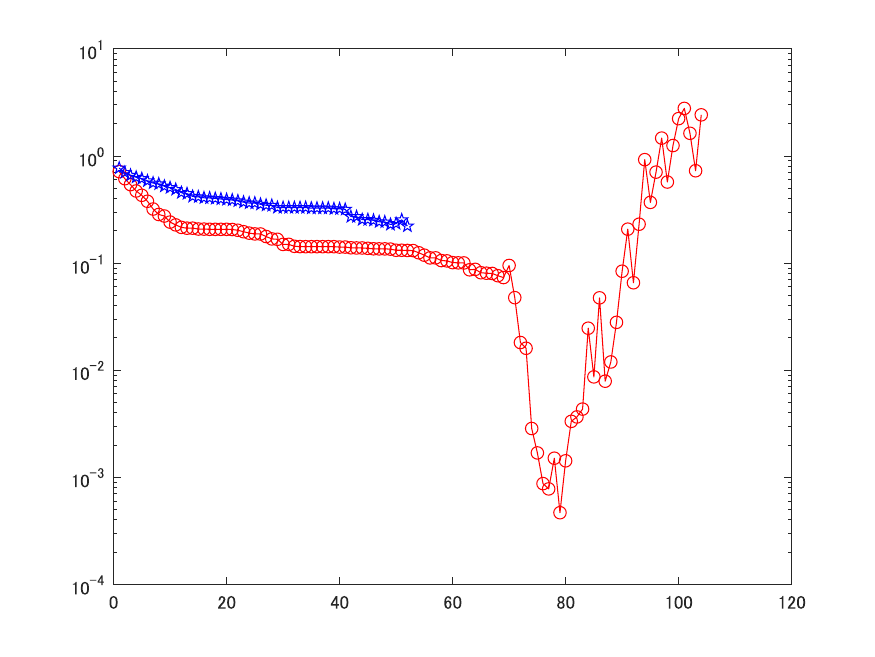}
\caption{$\displaystyle \frac{\|A^{{\rm T}}\vector{r}_{k}\|_{2}}{\|A^{{\rm T}}\vector{b}\|_{2}}$ versus 
the number of iterations for RRGMRES (blue, $\star$) and 
GMRES (red, $\circ$) without preconditioning for the index 2 inconsistent system }
\label{aid2_i}
\end{center}
\end{figure}

\begin{figure}[htbp]
\begin{center}
\includegraphics[scale=0.55]{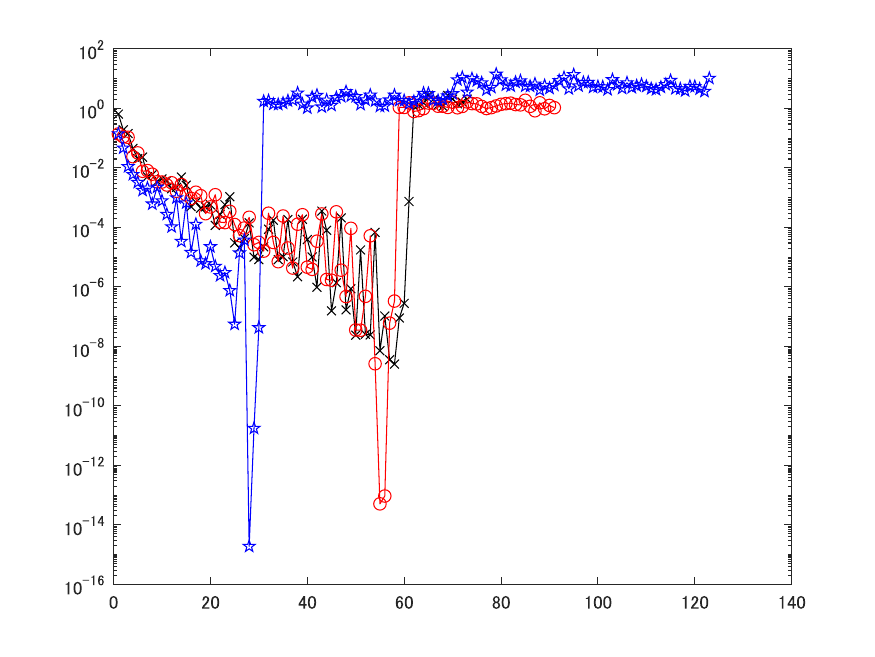}
\caption{$\displaystyle \frac{\|A^{{\rm T}}\vector{r}_{k}\|_{2}}{\|A^{{\rm T}}\vector{b}\|_{2}}$ versus 
the number of iterations for  AB-RRGMRES using NR-SSOR
with 1 inner iteration and $\omega = 1.0$ (blue, $\star$), 
$B=\{{\rm diag}(A^{\rm T}A)\}^{-1} A^{\rm T}$ (red, $\circ$), 
and $B=A^{\rm T}$ (black, $\times$)
for the index 2 inconsistent system}
\label{acatid2_i}
\end{center}
\end{figure}

\begin{figure}[htbp]
\begin{center}
\includegraphics[scale=0.55]{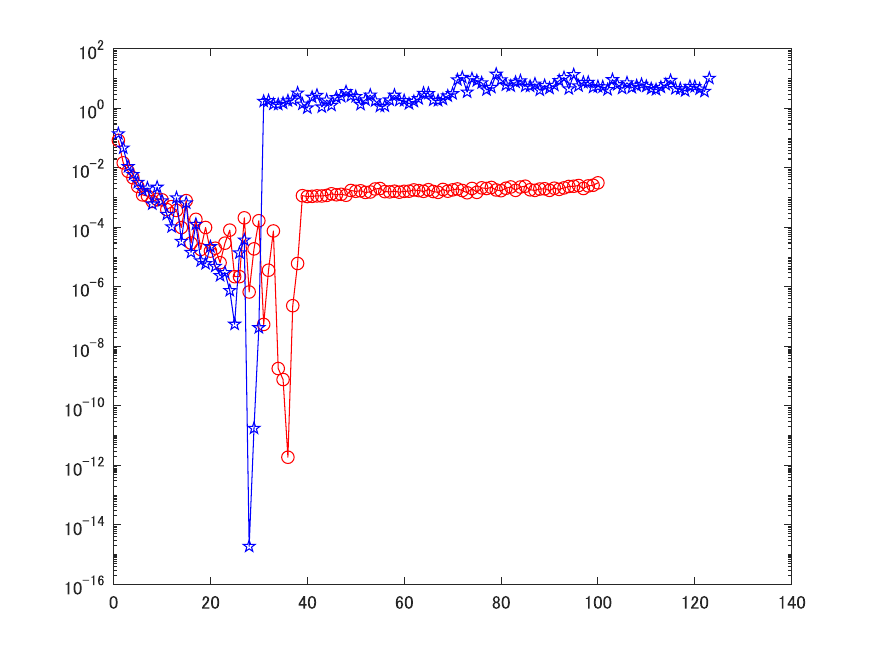}
\caption{$\displaystyle \frac{\|A^{{\rm T}}\vector{r}_{k}\|_{2}}{\|A^{{\rm T}}\vector{b}\|_{2}}$ versus 
the number of iterations for AB-RRGMRES using NR-SSOR
with 1 inner iteration and $\omega =1.0$ (blue, $\star$) and 
AB-GMRES using NR-SSOR
with 1 inner iteration and $\omega =1.1$ (red, $\circ$) 
for the index 2 inconsistent system}
\label{abgmrrid2_i}
\end{center}
\end{figure}

We observe the following from Fig. \ref{aid2_i}, \ref{acatid2_i} and \ref{abgmrrid2_i}.

\begin{itemize}

\item Since $\frac{\|A^{{\rm T}}\vector{r}_{k}\|_{2}}{\|A^{{\rm T}}\vector{b}\|_{2}}$ for RRGMRES is larger 
than $10^{-1}$, RRGMRES does not converge to a least squares solution. 
 The minimum value of 
$\frac{\|A^{\rm T}\vector{r}_{k}\|_{2}}{\|A^{\rm T}\vector{b}\|_{2}}$ for GMRES is more 
than $10^{-4}$, which is almost $10^{-3}$ times that of RRGMRES, 
but GMRES does not converge to a least squares solution.

\item The minimum value of $\frac{\|A^{{\rm T}}\vector{r}_{k}\|_{2}}{\|A^{{\rm T}}\vector{b}\|_{2}}$ 
for AB-RRGMRES 
using NR-SSOR with 1 inner iteration and $\omega = 1.0$ is almost $10^{-1}$ times smaller than 
with $B=\{{\rm diag}(A^{\rm T}A)\}^{-1} A^{\rm T}$, and
$10^{-7}$ of $B = A^{\rm T}$. 

\item The number of iterations of AB-RRGMRES using NR-SSOR with 1 inner iteration at the 
minimum value of $\frac{\|A^{{\rm T}}\vector{r}_{k}\|_{2}}{\|A^{{\rm T}}\vector{b}\|_{2}}$
is almost half of \\ $B=\{{\rm diag}(A^{\rm T}A)\}^{-1} A^{\rm T}$,
and $B = A^{\rm T}$.

\item The minimum value of 
$\frac{\|A^{\rm T}\vector{r}_{k}\|_{2}}{\|A^{\rm T}\vector{b}\|_{2}}$ for 
AB-RRGMRES 
using NR-SSOR with 1 inner iteration and $\omega = 1.0$ is $10^{-3}$ times 
that of AB-GMRES using NR-SSOR with 1 inner iteration and $\omega = 1.1$.

\end{itemize}

\subsection{Comparison with MINRES-QLP for GP and index 2 inconsistent problems}
For the GP inconsistent problem of section \ref{NumGP} and 
index 2 inconsistent problem of section \ref{NumId2}, 
we compare $\displaystyle \frac{\|A^{{\rm T}}\vector{r}_{k}\|_{2}}{\|A^{{\rm T}}\vector{b}\|_{2}}$
of MINRES-QLP with that of AB-RRGMRES using $B = A^{\rm T}$ $(C = I)$. 
The method for applying MINRES-QLP to the GP, and index2 inconsistent problems 
was explained in section \ref{MINQLP}.

Fig. \ref{qlp_gp} 
shows $\displaystyle \frac{\|A^{{\rm T}}\vector{r}_{k}\|_{2}}{\|A^{{\rm T}}\vector{b}\|_{2}}$ versus 
the number of iterations $k$ for AB-RRGMRES using $B=A^{\rm T}$ and 
MINRES-QLP applied to the same GP inconsistent problem as in section \ref{NumGP}.

Fig. \ref{qlp_id2} 
shows $\displaystyle \frac{\|A^{{\rm T}}\vector{r}_{k}\|_{2}}{\|A^{{\rm T}}\vector{b}\|_{2}}$ versus 
the number of iterations $k$ for AB-RRGMRES using $B=A^{\rm T}$ and
MINRES-QLP applied to the same index 2 inconsistent problem as in section \ref{NumId2}.

\begin{figure}[htbp]
\begin{center}
\includegraphics[scale=0.55]{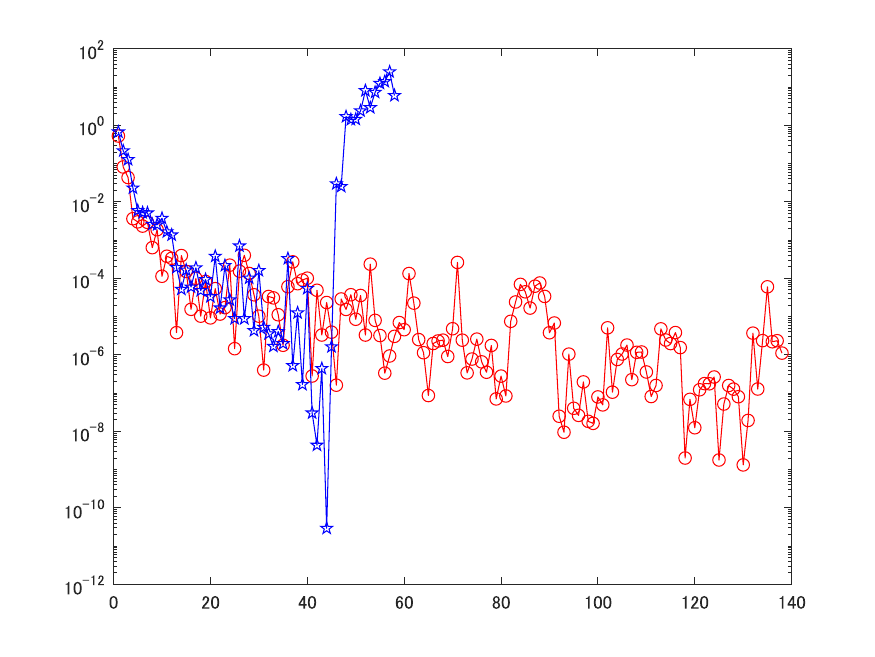}
\caption{$\displaystyle \frac{\|A^{{\rm T}}\vector{r}_{k}\|_{2}}{\|A^{{\rm T}}\vector{b}\|_{2}}$ versus 
	the number of iterations for AB-RRGMRES using $B=A^{\rm T}$ (blue, $\star$) and 
MINRES-QLP (red, $\circ$) applied to the GP inconsistent problem }
\label{qlp_gp}
\end{center}
\end{figure}

\begin{figure}[htbp]
\begin{center}
\includegraphics[scale=0.55]{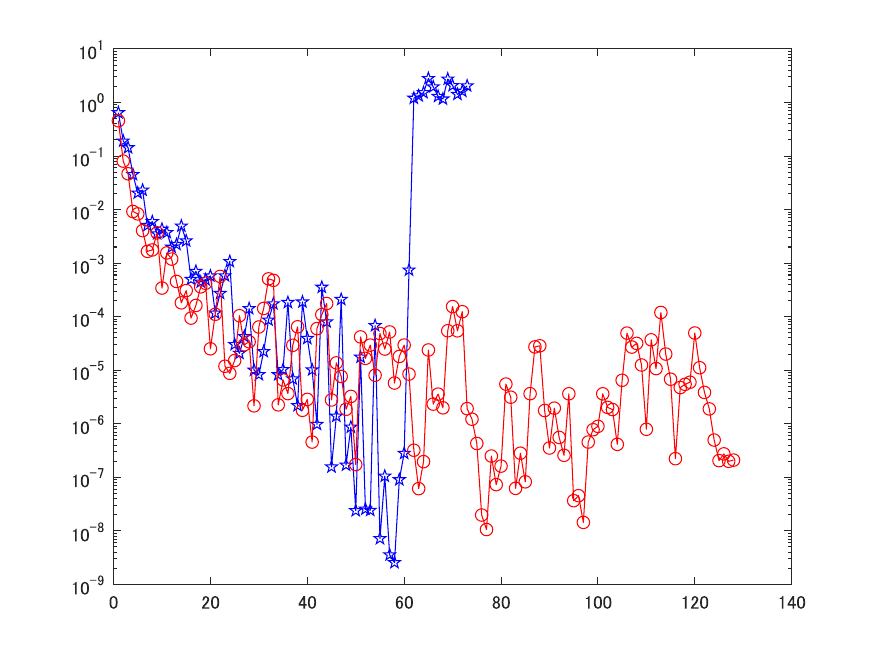}
\caption{$\displaystyle \frac{\|A^{{\rm T}}\vector{r}_{k}\|_{2}}{\|A^{{\rm T}}\vector{b}\|_{2}}$ versus 
the number of iterations for AB-RRGMRES using $B=A^{\rm T}$ (blue, $\star$) and 
MINRES-QLP (red, $\circ$) applied to the index 2 inconsistent problem}
\label{qlp_id2}
\end{center}
\end{figure}

We observe the following from Fig. \ref{qlp_gp} for the GP inconsistent problem.

\begin{itemize}
\item The minimum value of $\displaystyle \frac{\|A^{{\rm T}}\vector{r}_{k}\|_{2}}{\|A^{{\rm T}}\vector{b}\|_{2}}$
for AB-RRGMRES with $B = A^{\rm T}$ is less than $10^{-10}$, 
which is almost $5.5 \times 10^{-3}$ times that of MINRES-QLP,
and the minimum is reached at 1/3 the number of iterations of the latter.

\end{itemize}

We observe the following from Fig. \ref{qlp_id2} for the index 2 inconsistent
problem.

\begin{itemize}
\item The minimum value of $\displaystyle \frac{\|A^{{\rm T}}\vector{r}_{k}\|_{2}}{\|A^{{\rm T}}\vector{b}\|_{2}}$
for AB-RRGMRES with $B = A^{\rm T}$ is less than $10^{-8}$, 
which is almost $1.2 \times 10^{-2}$ times that of MINRES-QLP,
and the minimum is reached about $70\%$ the number of iterations of the latter.

\end{itemize}

\subsection{Underdetermined least squares problems}\label{NumLS}

For the least squares problem (\ref{lstsq2}) where
$A \in \rmn$ may be over- or underdetermined and is not necessarily of full rank, and
$\bb \in \Rm$ is not necessarily in $\ran(A)$,
we can also use the AB-RRGMRES method which applies RRGMRES to (\ref{AB})
where $ B \in \rnm $ satisfies $ \ran (AB) = \ran (A) $.

Considering the computational cost and required memory,
in this paper,
we apply AB-RRGMRES to the underdetermined least squares problem in \cite{HYI}.

Table \ref{matinfo} gives information on the test matrix from \cite{FOS}, including 
the number of rows $m$, the number of columns $n$, and the number of elements $nnz$.
The ${\rm T}$ in the name of the matrix denotes that the matrix was transposed.
Table \ref{matinfo} shows the effective size of the matrix after removing all zero columns
and zero rows.
In Table \ref{matinfo}, the definition of Rank and $\kappa(A)$ is the same as in Table \ref{gpinfo}.
Here, the information of matrices except ${\rm Maragal\_5T}$ in Table \ref{matinfo} is based on 
\cite[Table 1]{MH}. The Rank and $\kappa(A)$ of ${\rm Maragal\_8T}$ could not be computed on 
the computer of the author of \cite{MH} due to insufficient memory.

\begin{table}[htbp]
\centering
\caption{Characteristics of the coefficient matrices of the underdetermined least squares problems }
\label{matinfo}
\begin{tabular}{|c|r|r|r|r|r|}
\hline
Matrix & m & n & nnz  & Rank & $\kappa(A)$ \\
\hline
\hline
${\rm Maragal\_5T}$ &  3,296 & 4,654 & 93,091 & 2,147 &  {\rm $1.19 \times 10^5$} \\
\hline
${\rm lp\_dfl001}$ &  6,071 & 12,230 & 35,632 & 6,058 &  {\rm $3.49 \times 10^2$} \\
\hline
${\rm landmarkT}$ &  2,673 & 71,952 & 1,146,848 & 2,671 &  {\rm $1.02 \times 10^8$} \\
\hline
${\rm Maragal\_6T}$ &  10,144 & 21,251 & 537,694 & 8,331 &  {\rm $2.91 \times 10^6$} \\
\hline
${\rm Maragal\_7T}$ &  26,525 & 46,845 & 1,200,537 & 20,843 &  {\rm $8.98 \times 10^6$} \\
\hline
${\rm Maragal\_8}$ &  33,093 & 60,845 & 1,308,415 &  &   \\
\hline
${\rm lp\_cre\_a}$ &  3,428 & 7,248 & 18,168 & 3,423 &  {\rm $2.11 \times 10^4$} \\
\hline
\end{tabular}
\end{table}

The right hand side vector $\vector{b}$ was set as 
$\vector{b} = \frac{A \times (1,1,.,1)^{{\rm T}}}{\|A \times (1,1,...,1)^{{\rm T}}\|_{2}}
 + \frac{\vector{u}(0,1)}{\|\vector{u}(0,1)\|_{2}}\times 0.01$
, where $\vector{u}(0,1)$ 
is an $m$ dimensional vector of pseudorandom numbers generated according to the uniform 
distribution in the interval [0,1].
Hence, generically $\vector{b} \notin \ran(A)$.

For AB-RRGMRES using NR-SSOR for ${\rm Maragal\_5T}$,
we determined the number of inner iterations $\ell$
which is the smallest 
among 1,2,4,6,8,10 inner iterations such that
the minimum value of
$\displaystyle 
\frac{\|A^{{\rm T}}\vector{r}_{k}\|_{2}}{\|A^{{\rm T}}\vector{b}\|_{2}}$ 
becomes smaller than $5.0 \times 10^{-10}$
when the relaxation parameter $\omega $ is $1.0$.
Next, we determined that the number of inner iterations $\ell$ of AB-GMRES using NR-SSOR
is similarly.
Furthermore, we determined the optimal $\omega$ which minimizes
$\displaystyle 
\frac{\|A^{{\rm T}}\vector{r}_{k}\|_{2}}{\|A^{{\rm T}}\vector{b}\|_{2}}$ with $\ell$ inner iterations 
by searching among $\omega = 1.8,1.6,1.4,1.2,1.1,1.0,0.9,0.8,0.6,0.4,0.2$.
As a result, for ${\rm Maragal\_5T}$, we chose $\omega = 1.1$ and $4$ as the number of inner iterations for
AB-RRGMRES and $\omega = 0.8$ and $4$ as the number of inner iterations for
AB-GMRES. 

For AB-RRGMRES using NR-SSOR for ${\rm lp\_dfl001}$,
we determined the number of inner iterations $\ell$
which is the smallest among 1,2,4,6,8,10 inner iterations such that
the minimum value of
$\displaystyle 
\frac{\|A^{{\rm T}}\vector{r}_{k}\|_{2}}{\|A^{{\rm T}}\vector{b}\|_{2}}$
of AB-RRGMRES using NR-SSOR is the smallest
when the relaxation parameter $\omega $ is $1.0$.
Next, we determined that the number of inner iterations $\ell$ of AB-GMRES using NR-SSOR
is similarly.
Then, we determined the optimal $\omega$ which minimizes
$\displaystyle 
\frac{\|A^{{\rm T}}\vector{r}_{k}\|_{2}}{\|A^{{\rm T}}\vector{b}\|_{2}}$
of AB-RRGMRES and AB-GMRES using NR-SSOR with $\ell$ inner iterations 
by searching among $\omega = 1.8,1.6,1.4,1.2,1.1,1.0,0.9,0.8,0.6,0.4,0.2$.
As a result, we chose $\omega = 1.4$ and $6$ as the number of inner iterations for
AB-RRGMRES and $\omega = 1.6$ and $6$ as the number of inner iterations for
AB-GMRES.  

For the computation time for AB-RRGMRES for the least squares problems ${\rm Maragal\_5T}$
and ${\rm lp\_dfl001}$, an average was taken over 10 measurements with the same vector $\vector{b}$.

Fig. \ref{acatund_i} for ${\rm Maragal\_5T}$ and Fig. \ref{acatlp_i} for ${\rm lp\_dfl001}$ 
show $\displaystyle \frac{\|A^{{\rm T}}\vector{r}_{k}\|_{2}}{\|A^{{\rm T}}\vector{b}\|_{2}}$ versus 
the number of iterations $k$ for AB-RRGMRES using NR-SSOR, 
$B=\{{\rm diag}(A^{\rm T}A)\}^{-1} A$ 
and $B = A^{\rm T}$, respectively. 

Table \ref{rss_nop_cpu} for ${\rm Maragal\_5T}$ shows the number of iterations 
and the computation time in seconds for AB-RRGMRES using NR-SSOR with $4$ inner iterations and 
$\omega = 1.1$, $B=\{{\rm diag}(A^{\rm T}A)\}^{-1} A$
and using $B = A^{\rm T}$ when the convergence criterion is 
$\displaystyle \frac{\|A^{{\rm T}}\vector{r}_{k}\|_{2}}{\|A^{{\rm T}}\vector{b}\|_{2}} < 10^{-8}$.

Table \ref{lp_cpu} for ${\rm lp\_dfl001}$ shows the number of iterations 
and the computation time in seconds of AB-RRGMRES using NR-SSOR with $6$ inner iterations 
and $\omega = 1.4$, 
using $B=\{{\rm diag}(A^{\rm T}A)\}^{-1} A$
and using $B = A^{\rm T}$ when the convergence criterion is 
$\displaystyle \frac{\|A^{{\rm T}}\vector{r}_{k}\|_{2}}{\|A^{{\rm T}}\vector{b}\|_{2}} < 10^{-10}$.

\begin{figure}[htbp]
\begin{center}
\includegraphics[scale=0.55]{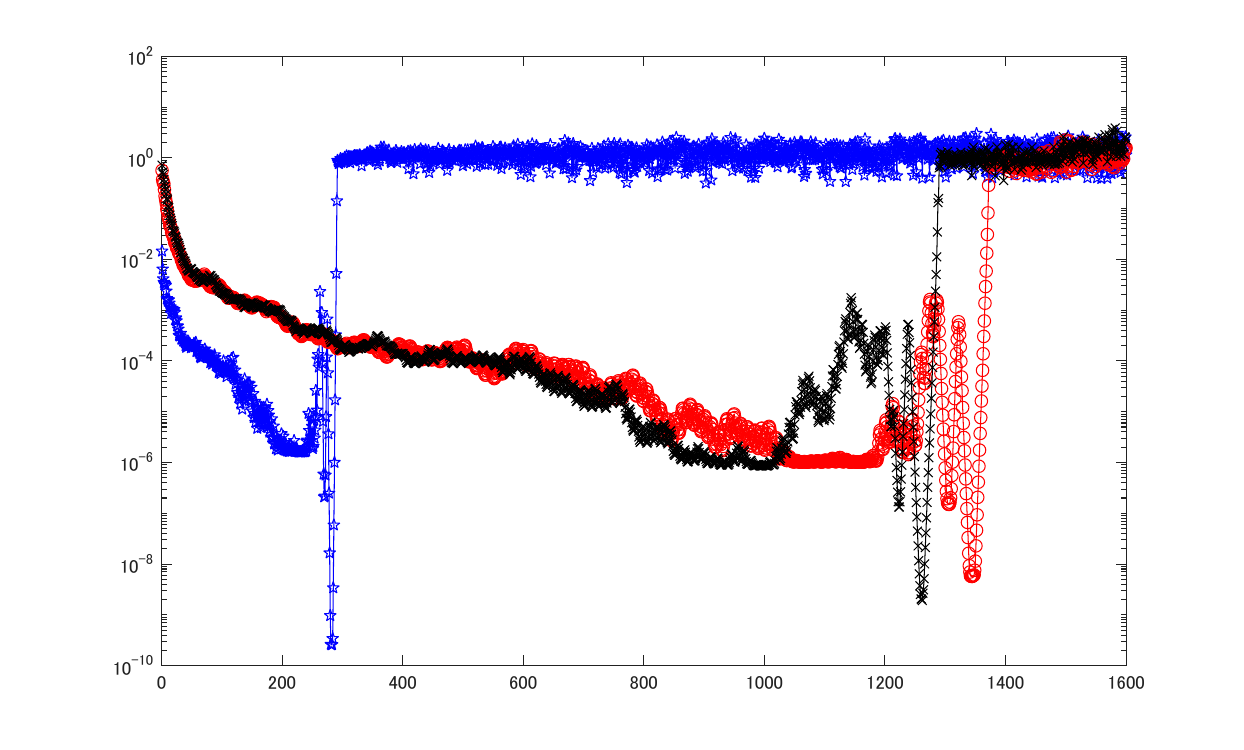}
\caption{$\displaystyle \frac{\|A^{{\rm T}}\vector{r}_{k}\|_{2}}{\|A^{{\rm T}}\vector{b}\|_{2}}$ versus 
the number of iterations of AB-RRGMRES using NR-SSOR
with 4 inner iterations and $\omega = 1.1$ (blue, $\star$), 
$B=\{{\rm diag}(A^{\rm T}A)\}^{-1} A$ (red, $\circ$), 
and $B=A^{\rm T}$ (black, $\times$) for the underdetermined least squares problem ${\rm Maragal\_5T}$}
\label{acatund_i}
\end{center}
\end{figure}

\begin{table}[htbp]
\centering
\caption{Computation results for the underdetermined least squares problem (${\rm Maragal\_5T}$) 
Iter: number of iterations, Tno: computation time not including the computation time for
computing the relative residual norm 
(Convergence criterion : $\displaystyle \frac{\|A^{{\rm T}}\vector{r}_{k}\|_{2}}{\|A^{{\rm T}}\vector{b}\|_{2}} < 10^{-8}$)}
\label{rss_nop_cpu}
\begin{tabular}{|c|r|r|r|r|r|r|}
\hline
Method & Iter & Tno [sec] \\
\hline
\hline
AB-RRGMRES using NR-SSOR with 4 inner iterations and $\omega = 1.1$ & 280  & 1.66 \\
\hline
AB-RRGMRES using $B=\{{\rm diag}(A^{\rm T}A)\}^{-1} A$  & 1,340 & 16.1 \\
\hline
AB-RRGMRES using $B = A^{\rm T}$ & 1,257 & 14.2 \\
\hline
\end{tabular}
\end{table}

\begin{figure}[htbp]
\begin{center}
\includegraphics[scale=0.55]{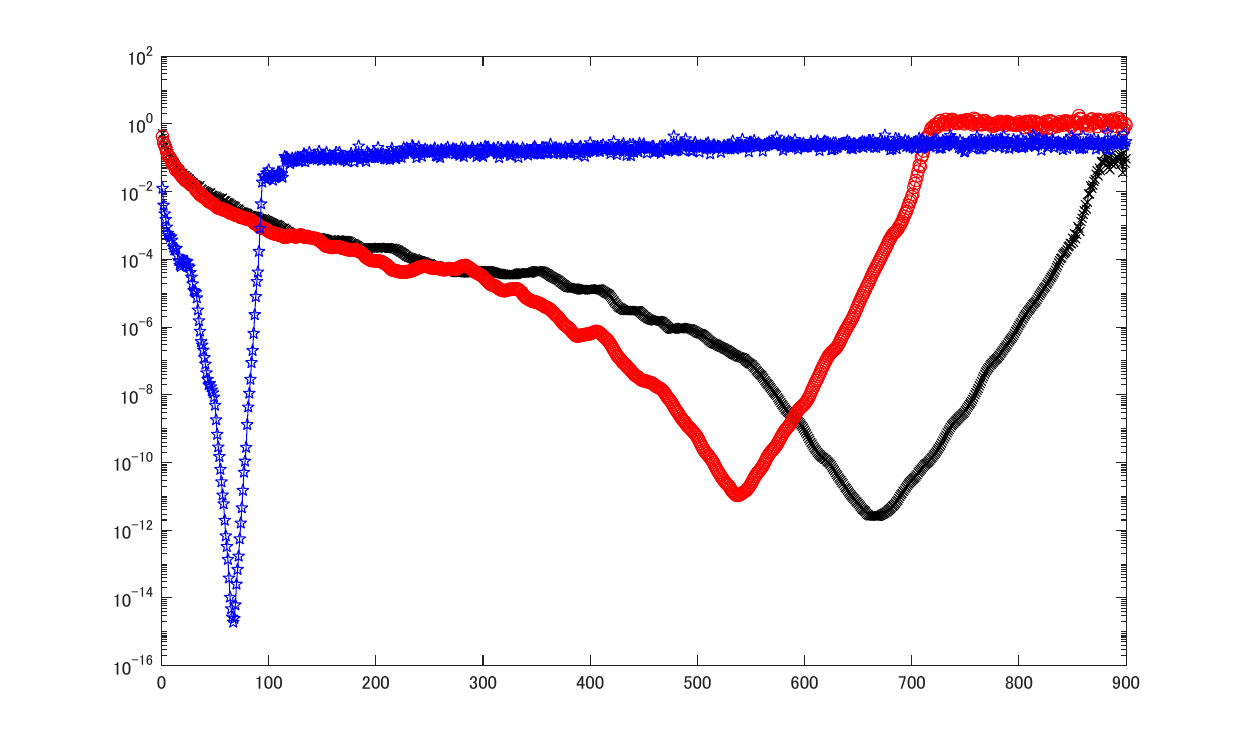}
\caption{$\displaystyle \frac{\|A^{{\rm T}}\vector{r}_{k}\|_{2}}{\|A^{{\rm T}}\vector{b}\|_{2}}$ versus 
the number of iterations of AB-RRGMRES using NR-SSOR
with 6 inner iterations and $\omega = 1.4$ (blue, $\star$), 
$B=\{{\rm diag}(A^{\rm T}A)\}^{-1} A$ (red, $\circ$), 
and $B=A^{\rm T}$ (black, $\times$) for the underdetermined least squares problem ${\rm lp\_dfl001}$}
\label{acatlp_i}
\end{center}
\end{figure}

\begin{table}[htbp]
\centering
\caption{Computation results for the underdetermined least squares problem (${\rm lp\_dfl001}$) 
Iter: number of iterations, Tno: computation time not including the computation time for
computing the relative residual norm 
(Convergence criterion : $\displaystyle \frac{\|A^{{\rm T}}\vector{r}_{k}\|_{2}}{\|A^{{\rm T}}\vector{b}\|_{2}} < 10^{-10}$)}
\label{lp_cpu}
\begin{tabular}{|c|r|r|r|r|r|r|}
\hline
Method & Iter & Tno [sec] \\
\hline
\hline
AB-RRGMRES using NR-SSOR with 6 inner iterations and $\omega = 1.4$ & 55   & 0.27  \\
\hline
AB-RRGMRES using $B=\{{\rm diag}(A^{\rm T}A)\}^{-1} A$  & 515   & 3.73 \\
\hline
AB-RRGMRES using $B = A^{\rm T}$ & 623 & 5.49  \\
\hline
\end{tabular}
\end{table}

Fig. \ref{abgmrrund_i} for ${\rm Maragal\_5T}$ and Fig. \ref{abgmrrlp_i} for ${\rm lp\_dfl001}$ 
show $\displaystyle \frac{\|A^{{\rm T}}\vector{r}_{k}\|_{2}}{\|A^{{\rm T}}\vector{b}\|_{2}}$ versus 
the number of iterations $k$ for AB-RRGMRES using NR-SSOR
and AB-GMRES using NR-SSOR.

Table \ref{min_m5t} for ${\rm Maragal\_5T}$ and Table \ref{min_dfl} for ${\rm lp\_dfl001}$
show the minimum value of $\displaystyle \frac{\|A^{{\rm T}}\vector{r}_{k}\|_{2}}{\|A^{{\rm T}}\vector{b}\|_{2}}$ 
and the number of iterations for attaining the minimum value for AB-RRGMRES using NR-SSOR
and AB-GMRES using NR-SSOR.

\begin{figure}[htbp]
\begin{center}
\includegraphics[scale=0.55]{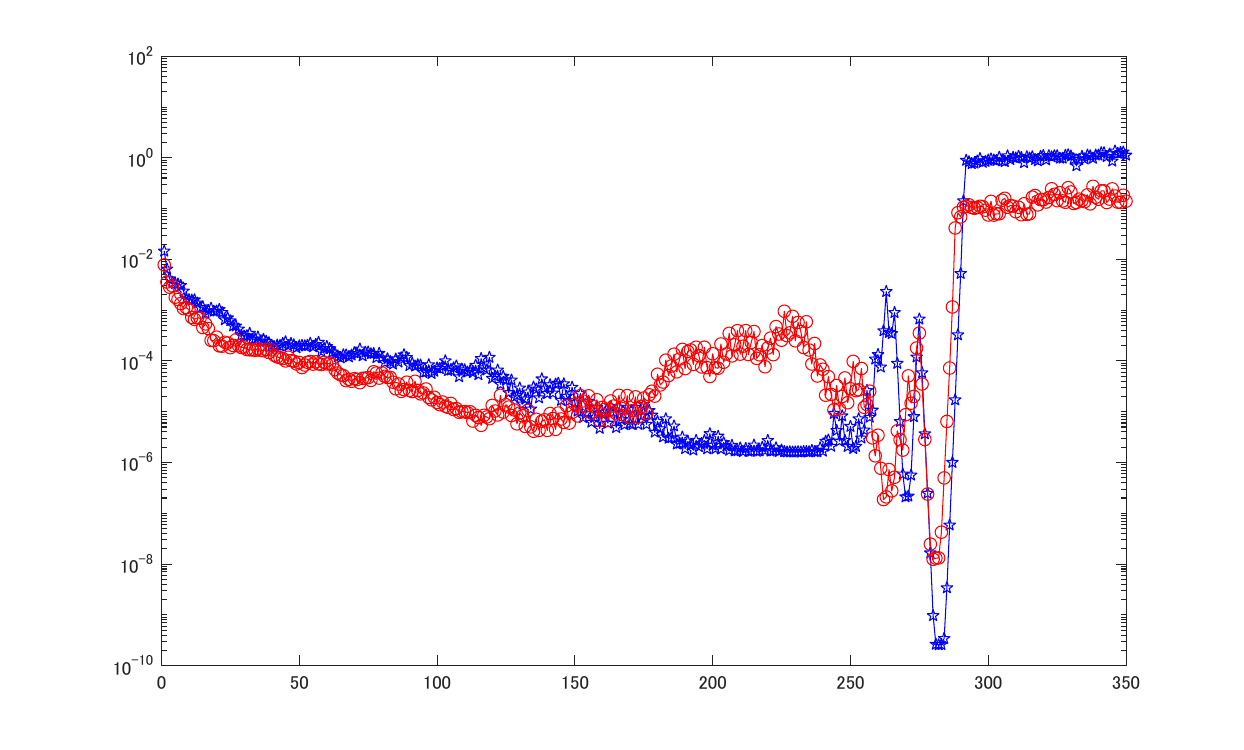}
\caption{$\displaystyle \frac{\|A^{{\rm T}}\vector{r}_{k}\|_{2}}{\|A^{{\rm T}}\vector{b}\|_{2}}$ versus 
the number of iterations for AB-RRGMRES using NR-SSOR
with 4 inner iteration and $\omega =1.1$ (blue, $\star$) and 
AB-GMRES using NR-SSOR
with 4 inner iteration and $\omega =0.8$ (red, $\circ$) 
	for the underdetermined least squares problem ${\rm Maragal\_5T}$}
\label{abgmrrund_i}
\end{center}
\end{figure}

\begin{figure}[htbp]
\begin{center}
\includegraphics[scale=0.55]{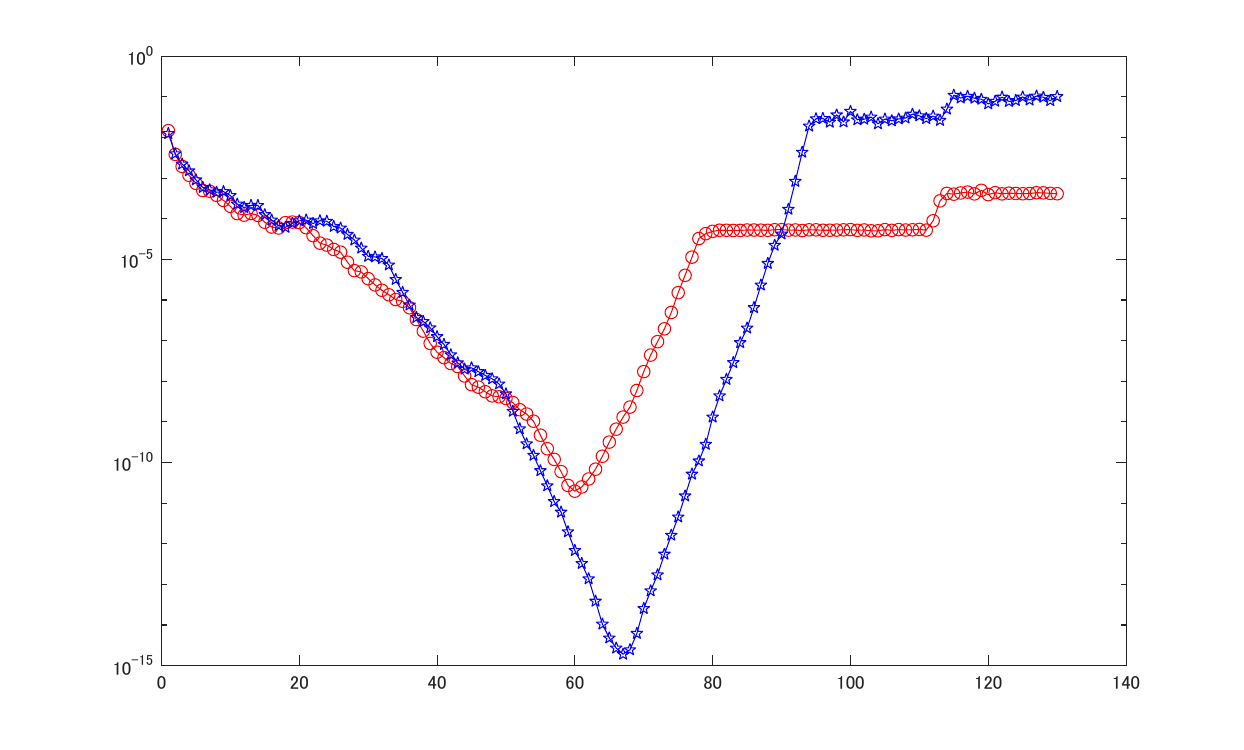}
\caption{$\displaystyle \frac{\|A^{{\rm T}}\vector{r}_{k}\|_{2}}{\|A^{{\rm T}}\vector{b}\|_{2}}$ versus 
the number of iterations for AB-RRGMRES using NR-SSOR
with 6 inner iteration and $\omega =1.4$ (blue, $\star$) and 
AB-GMRES using NR-SSOR
with 6 inner iteration and $\omega =1.6$ (red, $\circ$) 
	for the underdetermined least squares problem ${\rm lp\_dfl001}$}
\label{abgmrrlp_i}
\end{center}
\end{figure}

\begin{table}[htbp]
\centering
\caption{Computation results for the underdetermined least squares problem (${\rm Maragal\_5T}$) 
Iter: number of iterations, $\displaystyle \min \frac{\|A^{{\rm T}}\vector{r}_{k}\|_{2}}{\|A^{{\rm T}}\vector{b}\|_{2}}$ : 
minimum value of $\displaystyle \frac{\|A^{{\rm T}}\vector{r}_{k}\|_{2}}{\|A^{{\rm T}}\vector{b}\|_{2}}$}
\label{min_m5t}
\begin{tabular}{|c|r|r|}
\hline
Method & Iter &  $\displaystyle \min \frac{\|A^{{\rm T}}\vector{r}_{k}\|_{2}}{\|A^{{\rm T}}\vector{b}\|_{2}}$ \\
\hline
\hline
AB-RRGMRES using NR-SSOR with 4 inner iterations and $\omega = 1.1$ & 282   & $2.56 \times 10^{-10}$ \\
\hline
AB-GMRES using NR-SSOR with 4 inner iterations and $\omega = 0.8$  & 280   &  $1.24 \times 10^{-8}$ \\
\hline
\end{tabular}
\end{table}

\begin{table}[htbp]
\centering
\caption{Computation results for the underdetermined least squares problem (${\rm lp\_dfl001}$) 
Iter: number of iterations, $\displaystyle \min \frac{\|A^{{\rm T}}\vector{r}_{k}\|_{2}}{\|A^{{\rm T}}\vector{b}\|_{2}}$ : 
minimum value of $\displaystyle \frac{\|A^{{\rm T}}\vector{r}_{k}\|_{2}}{\|A^{{\rm T}}\vector{b}\|_{2}}$}
\label{min_dfl}
\begin{tabular}{|c|r|r|}
\hline
Method & Iter &  $\min \displaystyle \frac{\|A^{{\rm T}}\vector{r}_{k}\|_{2}}{\|A^{{\rm T}}\vector{b}\|_{2}}$ \\
\hline
\hline
AB-RRGMRES using NR-SSOR with 6 inner iterations and $\omega = 1.4$ & 67   & $1.91 \times 10^{-15}$ \\
\hline
AB-GMRES using NR-SSOR with 6 inner iterations and $\omega = 1.6$  & 60   &  $1.97 \times 10^{-11}$ \\
\hline
\end{tabular}
\end{table}

We observe the following from Fig. \ref{acatund_i}, \ref{abgmrrund_i}
, Table \ref{rss_nop_cpu} and Table \ref{min_m5t} for ${\rm Maragal\_5T}$.

\begin{itemize}

\item The minimum value of $\frac{\|A^{{\rm T}}\vector{r}_{k}\|_{2}}{\|A^{{\rm T}}\vector{b}\|_{2}}$ 
for AB-RRGMRES 
using NR-SSOR with 4 inner iterations and $\omega = 1.1$ 
is less than $10^{-9}$, and almost $1/10$ of $B=A^{\rm T}$. 
On the other hand, the minimum value of 
$\frac{\|A^{{\rm T}}\vector{r}_{k}\|_{2}}{\|A^{{\rm T}}\vector{b}\|_{2}}$ for AB-RRGMRES 
with $B=\{{\rm diag}(A^{\rm T}A)\}^{-1} A^{\rm T}$ is a little larger than
with $B = A^{\rm T}$.

\item When the convergence criterion is 
$\displaystyle \frac{\|A^{{\rm T}}\vector{r}_{k}\|_{2}}{\|A^{{\rm T}}\vector{b}\|_{2}} < 10^{-8}$,
AB-RRGMRES using NR-SSOR with 4 inner iterations and $\omega = 1.1$ 
is $8.55$, $9.70$ times faster 
in the computation time compared to
AB-RRGMRES using $B = A^{\rm T}$, $B=\{{\rm diag}(A^{\rm T}A)\}^{-1} A^{\rm T}$.
Furthermore, the ratio of reduction of the computation time is higher
than that of number of iterations because AB-RRGMRES solves
the least squares problem with Hessenberg matrix whose size is equal to number of iterations
and the order of the computational cost of the Arnoldi process is higher
than the number of iterations.

\item The minimum value of 
$\frac{\|A^{\rm T}\vector{r}_{k}\|_{2}}{\|A^{\rm T}\vector{b}\|_{2}}$ for 
AB-RRGMRES 
using NR-SSOR with 4 inner iteration and $\omega = 1.1$ is $2.06 \times 10^{-2}$ times 
that of AB-GMRES using NR-SSOR with 4 inner iteration and $\omega = 0.8$.

\end{itemize}

We observe the following from Fig. \ref{acatlp_i}, \ref{abgmrrlp_i}
, Table \ref{lp_cpu} and Table \ref{min_dfl} for ${\rm lp\_dfl001}$.

\begin{itemize}

\item The minimum value of $\frac{\|A^{{\rm T}}\vector{r}_{k}\|_{2}}{\|A^{{\rm T}}\vector{b}\|_{2}}$ 
for AB-RRGMRES 
using NR-SSOR with 6 inner iterations and $\omega = 1.4$ 
is less than $10^{-14}$, and almost $10^{-3}$ of $B=A^{\rm T}$
and $B=\{{\rm diag}(A^{\rm T}A)\}^{-1} A^{\rm T}$. 
On the other hand, the minimum value of 
$\frac{\|A^{{\rm T}}\vector{r}_{k}\|_{2}}{\|A^{{\rm T}}\vector{b}\|_{2}}$ for AB-RRGMRES 
with $B = A^{\rm T}$ is a little smaller than 
with $B=\{{\rm diag}(A^{\rm T}A)\}^{-1} A^{\rm T}$.

\item When the convergence criterion is 
$\displaystyle \frac{\|A^{{\rm T}}\vector{r}_{k}\|_{2}}{\|A^{{\rm T}}\vector{b}\|_{2}} < 10^{-10}$,
AB-RRGMRES using NR-SSOR with 6 inner iterations and $\omega = 1.4$ 
is $20.3$ times faster 
in computation time compared to
AB-RRGMRES using $B = A^{\rm T}$ and $13.8$ of $B=\{{\rm diag}(A^{\rm T}A)\}^{-1} A^{\rm T}$.
Furthermore, the ratio of reduction of the computation time is higher
than that of number of iterations because AB-RRGMRES solves
the least squares problem with Hessenberg matrix whose size is equal to number of iterations
and the order of the computational cost of the Arnoldi process is higher
than the number of iterations.

\item The minimum value of 
$\frac{\|A^{\rm T}\vector{r}_{k}\|_{2}}{\|A^{\rm T}\vector{b}\|_{2}}$ for 
AB-RRGMRES 
using NR-SSOR with 6 inner iteration and $\omega = 1.4$ is $0.97 \times 10^{-4}$ times 
that of AB-GMRES using NR-SSOR with 6 inner iteration and $\omega = 1.6$.

\end{itemize}

Next, we compare the number of iterations and the computation time 
for AB-RRGMRES using NR-SSOR, $B=\{{\rm diag}(A^{\rm T}A)\}^{-1} A$
and $B = A^{\rm T}$
for the underdetermined least squares problems ${\rm landmarkT}$, ${\rm Maragal\_6T}$, 
${\rm Maragal\_7T}$, ${\rm Maragal\_8}$ and ${\rm lp\_cre\_a}$ 
in Table \ref{matinfo}.
Furthermore, we compare the minimum value of $\frac{\|A^{{\rm T}}\vector{r}_{k}\|_{2}}{\|A^{{\rm T}}\vector{b}\|_{2}}$
and the number of iterations at the minimum value
of AB-RRGMRES using NR-SSOR with that of AB-GMRES using NR-SSOR 
for the underdetermined least squares problems ${\rm Maragal\_5T}$, ${\rm lp\_dfl001}$, 
${\rm landmarkT}$, ${\rm Maragal\_6T}$, 
${\rm Maragal\_7T}$, ${\rm Maragal\_8}$ and ${\rm lp\_cre\_a}$ 
in Table \ref{matinfo}.

For AB-RRGMRES using NR-SSOR for ${\rm landmarkT}$, ${\rm Maragal\_6T}$, 
${\rm Maragal\_7T}$, ${\rm Maragal\_8}$ and ${\rm lp\_cre\_a}$,
we determined the number of inner iterations $\ell$
which is the inner iteration 
among 1,2,4,6,8,10 inner iterations such that
the minimum value of
$\displaystyle 
\frac{\|A^{{\rm T}}\vector{r}_{k}\|_{2}}{\|A^{{\rm T}}\vector{b}\|_{2}}$
of AB-RRGMRES using NR-SSOR is the smallest
when the relaxation parameter $\omega $ is $1.0$.
Next, we determined the number of inner iterations $\ell$ of AB-GMRES using NR-SSOR
similarly to that of AB-RRGMRES using NR-SSOR.
Furthermore, we determined the optimal $\omega$ which minimizes
$\displaystyle 
\frac{\|A^{{\rm T}}\vector{r}_{k}\|_{2}}{\|A^{{\rm T}}\vector{b}\|_{2}}$
of AB-RRGMRES and AB-GMRES using NR-SSOR with $\ell$ inner iterations 
by searching among $\omega = 1.8,1.6,1.4,1.2,1.1,1.0,0.9,0.8,0.6,0.4,0.2$.
Then, for the matrices in Table \ref{matinfo}, 
the optimal value for 
$\omega$ and inner iterations $\ell$ for AB-RRGMRES using NR-SSOR
are described in Table \ref{omegaL}
and those for AB-GMRES using NR-SSOR are described in 
Table \ref{omegaL_gm}.

\begin{table}[htbp]
\centering
\caption{Optimal $\omega$ and inner iterations $\ell$ for AB-RRGMRES using NR-SSOR}
\label{omegaL}
\begin{tabular}{|c|r|r|}
\hline
Matrix & $\omega$ & $\ell$  \\
\hline
\hline
${\rm Maragal\_5T}$ &  1.1 & 4 \\
\hline
${\rm lp\_dfl001}$ &  1.4 & 6 \\
\hline
${\rm landmarkT}$ &  0.2 & 2  \\
\hline
${\rm Maragal\_6T}$ &  1.0 & 10  \\
\hline
${\rm Maragal\_7T}$ &  0.4 & 2  \\
\hline
${\rm Maragal\_8}$ &  1.0 & 4    \\
\hline
${\rm lp\_cre\_a}$ &  1.0 & 8 \\
\hline
\end{tabular}
\end{table}

\begin{table}[htbp]
\centering
\caption{Optimal $\omega$ and inner iterations $\ell$ for AB-GMRES using NR-SSOR}
\label{omegaL_gm}
\begin{tabular}{|c|r|r|}
\hline
Matrix & $\omega$ & $\ell$  \\
\hline
\hline
${\rm Maragal\_5T}$ & 0.8  & 4 \\
\hline
${\rm lp\_dfl001}$ & 1.6  & 6 \\
\hline
${\rm landmarkT}$ &  0.2 & 2  \\
\hline
${\rm Maragal\_6T}$ &  1.1 & 10  \\
\hline
${\rm Maragal\_7T}$ &  0.4 & 2  \\
\hline
${\rm Maragal\_8}$ &  0.9 & 4    \\
\hline
${\rm lp\_cre\_a}$ &  0.2 & 8 \\
\hline
\end{tabular}
\end{table}

For the computation time for AB-RRGMRES using NR-SSOR for the least squares problems ${\rm landmarkT}$, 
${\rm Maragal\_6T}$, ${\rm Maragal\_7T}$, ${\rm Maragal\_8}$ and ${\rm lp\_cre\_a}$, 
an average was taken over 10 measurements with the same vector $\vector{b}$.
Furthermore, for the computation time for AB-RRGMRES using $B=\{{\rm diag}(A^{\rm T}A)\}^{-1} A^{\rm T}$
and $B = A^{\rm T}$
for the least squares problems ${\rm Maragal\_6T}$, ${\rm Maragal\_7T}$ and
${\rm Maragal\_8}$, only one measurement was taken since
the computation time was very large due to large $\min$\{m,n\} of matrices in Table \ref{matinfo} 
and large number of iterations were required for convergence.
On the other hand, for the computation time for AB-RRGMRES using $B=\{{\rm diag}(A^{\rm T}A)\}^{-1} A^{\rm T}$
and $B = A^{\rm T}$
for the least squares problems ${\rm landmarkT}$ and ${\rm lp\_cre\_a}$,
an average was taken over 10 measurements with the same vector $\vector{b}$
since the computation time was not so large due to the size of $\min$\{m,n\} of the matrices
in Table \ref{matinfo}.

Table \ref{landT_cpu} for ${\rm landmarkT}$ shows the number of iterations 
and the computation time in seconds of AB-RRGMRES using NR-SSOR with $2$ inner iterations 
and $\omega = 0.2$, 
using $B=\{{\rm diag}(A^{\rm T}A)\}^{-1} A$
and using $B = A^{\rm T}$ when the convergence criterion is 
$\displaystyle \frac{\|A^{{\rm T}}\vector{r}_{k}\|_{2}}{\|A^{{\rm T}}\vector{b}\|_{2}} < 10^{-8}$.
Table \ref{m6T_cpu} for ${\rm Maragal\_6T}$ shows the number of iterations 
and the computation time in seconds of AB-RRGMRES using NR-SSOR with $10$ inner iterations 
and $\omega = 1.0$, 
using $B=\{{\rm diag}(A^{\rm T}A)\}^{-1} A$
and using $B = A^{\rm T}$ when the convergence criterion is 
$\displaystyle \frac{\|A^{{\rm T}}\vector{r}_{k}\|_{2}}{\|A^{{\rm T}}\vector{b}\|_{2}} < 10^{-6}$.
Table \ref{m7T_cpu} for ${\rm Maragal\_7T}$ shows the number of iterations 
and the computation time in seconds of AB-RRGMRES using NR-SSOR with $2$ inner iterations 
and $\omega = 0.4$, 
using $B=\{{\rm diag}(A^{\rm T}A)\}^{-1} A$
and using $B = A^{\rm T}$ when the convergence criterion is 
$\displaystyle \frac{\|A^{{\rm T}}\vector{r}_{k}\|_{2}}{\|A^{{\rm T}}\vector{b}\|_{2}} < 10^{-7}$.
Table \ref{m8_cpu} for ${\rm Maragal\_8}$ shows the number of iterations 
and the computation time in seconds of AB-RRGMRES using NR-SSOR with $4$ inner iterations 
and $\omega = 1.0$, 
using $B=\{{\rm diag}(A^{\rm T}A)\}^{-1} A$
and using $B = A^{\rm T}$ when the convergence criterion is 
$\displaystyle \frac{\|A^{{\rm T}}\vector{r}_{k}\|_{2}}{\|A^{{\rm T}}\vector{b}\|_{2}} < 10^{-7}$.
Table \ref{cre_a_cpu} for ${\rm lp\_cre\_a}$ shows the number of iterations 
and the computation time in seconds of AB-RRGMRES using NR-SSOR with $8$ inner iterations 
and $\omega = 1.0$, 
using $B=\{{\rm diag}(A^{\rm T}A)\}^{-1} A$
and using $B = A^{\rm T}$ when the convergence criterion is 
$\displaystyle \frac{\|A^{{\rm T}}\vector{r}_{k}\|_{2}}{\|A^{{\rm T}}\vector{b}\|_{2}} < 10^{-10}$.
 
Table \ref{min_landt} for ${\rm landmarkT}$, Table \ref{min_m6t} for ${\rm Maragal\_6T}$,
Table \ref{min_m7t} for ${\rm Maragal\_7T}$, Table \ref{min_m8} for ${\rm Maragal\_8}$ 
and Table \ref{min_crea} for ${\rm lp\_cre\_a}$ 
show the minimum value of $\displaystyle \frac{\|A^{{\rm T}}\vector{r}_{k}\|_{2}}{\|A^{{\rm T}}\vector{b}\|_{2}}$ 
and the number of iterations for getting the minimum value for AB-RRGMRES using NR-SSOR
and AB-GMRES using NR-SSOR.

\begin{table}[htbp]
\centering
\caption{Computation results for the underdetermined least squares problem (${\rm landmarkT}$) 
Iter: number of iterations, Tno: computation time not including the computation time for
computing the relative residual norm 
(Convergence criterion : $\displaystyle \frac{\|A^{{\rm T}}\vector{r}_{k}\|_{2}}{\|A^{{\rm T}}\vector{b}\|_{2}} < 10^{-8}$)}
\label{landT_cpu}
\begin{tabular}{|c|r|r|r|r|r|r|}
\hline
Method & Iter & Tno [sec] \\
\hline
\hline
AB-RRGMRES using NR-SSOR with 2 inner iterations and $\omega = 0.2$ & 1,262   & 47.2  \\
\hline
AB-RRGMRES using $B=\{{\rm diag}(A^{\rm T}A)\}^{-1} A$  & 2,664   & 98.9 \\
\hline
AB-RRGMRES using $B = A^{\rm T}$ & 2,664 & 95.8  \\
\hline
\end{tabular}
\end{table}

\begin{table}[htbp]
\centering
\caption{Computation results for the underdetermined least squares problem (${\rm Maragal\_6T}$) 
Iter: number of iterations, Tno: computation time not including the computation time for
computing the relative residual norm 
(Convergence criterion : $\displaystyle \frac{\|A^{{\rm T}}\vector{r}_{k}\|_{2}}{\|A^{{\rm T}}\vector{b}\|_{2}} < 10^{-6}$)}
\label{m6T_cpu}
\begin{tabular}{|c|r|r|r|r|r|r|}
\hline
Method & Iter & Tno [sec] \\
\hline
\hline
AB-RRGMRES using NR-SSOR with 10 inner iterations and $\omega = 1.0$ & 356   & 22.3  \\
\hline
AB-RRGMRES using $B=\{{\rm diag}(A^{\rm T}A)\}^{-1} A$  & 2,805   & 234.6 \\
\hline
AB-RRGMRES using $B = A^{\rm T}$ & 2,618 & 180.1  \\
\hline
\end{tabular}
\end{table}

\begin{table}[htbp]
\centering
\caption{Computation results for the underdetermined least squares problem (${\rm Maragal\_7T}$) 
Iter: number of iterations, Tno: computation time not including the computation time for
computing the relative residual norm 
(Convergence criterion : $\displaystyle \frac{\|A^{{\rm T}}\vector{r}_{k}\|_{2}}{\|A^{{\rm T}}\vector{b}\|_{2}} < 10^{-7}$)}
\label{m7T_cpu}
\begin{tabular}{|c|r|r|r|r|r|r|}
\hline
Method & Iter & Tno [sec] \\
\hline
\hline
AB-RRGMRES using NR-SSOR with 2 inner iterations and $\omega = 0.4$ & 756   & 54.6  \\
\hline
AB-RRGMRES using $B=\{{\rm diag}(A^{\rm T}A)\}^{-1} A$  & 2,506   & 397.7 \\
\hline
AB-RRGMRES using $B = A^{\rm T}$ & 2,314 & 336.1  \\
\hline
\end{tabular}
\end{table}

\begin{table}[htbp]
\centering
\caption{Computation results for the underdetermined least squares problem (${\rm Maragal\_8}$) 
Iter: number of iterations, Tno: computation time not including the computation time for
computing the relative residual norm 
(Convergence criterion : $\displaystyle \frac{\|A^{{\rm T}}\vector{r}_{k}\|_{2}}{\|A^{{\rm T}}\vector{b}\|_{2}} < 10^{-7}$)}
\label{m8_cpu}
\begin{tabular}{|c|r|r|r|r|r|r|}
\hline
Method & Iter & Tno [sec] \\
\hline
\hline
AB-RRGMRES using NR-SSOR with 4 inner iterations and $\omega = 1.0$ & 421   & 39.1  \\
\hline
AB-RRGMRES using $B=\{{\rm diag}(A^{\rm T}A)\}^{-1} A$  & 2,941   & 689.5 \\
\hline
AB-RRGMRES using $B = A^{\rm T}$ & 6,445 & 3435.0  \\
\hline
\end{tabular}
\end{table}

\begin{table}[htbp]
\centering
\caption{Computation results for the underdetermined least squares problem (${\rm lp\_cre\_a}$) 
Iter: number of iterations, Tno: computation time not including the computation time for
computing the relative residual norm 
(Convergence criterion : $\displaystyle \frac{\|A^{{\rm T}}\vector{r}_{k}\|_{2}}{\|A^{{\rm T}}\vector{b}\|_{2}} < 10^{-10}$)}
\label{cre_a_cpu}
\begin{tabular}{|c|r|r|r|r|r|r|}
\hline
Method & Iter & Tno [sec] \\
\hline
\hline
AB-RRGMRES using NR-SSOR with 8 inner iterations and $\omega = 1.0$ & 183   & 0.658  \\
\hline
AB-RRGMRES using $B=\{{\rm diag}(A^{\rm T}A)\}^{-1} A$  & 1,191   & 12.9 \\
\hline
AB-RRGMRES using $B = A^{\rm T}$ & 2,260 & 52.6  \\
\hline
\end{tabular}
\end{table}

\begin{table}[htbp]
\centering
\caption{Computation results for the underdetermined least squares problem (${\rm landmarkT}$) 
Iter: number of iterations, $\displaystyle \min \frac{\|A^{{\rm T}}\vector{r}_{k}\|_{2}}{\|A^{{\rm T}}\vector{b}\|_{2}}$ : 
minimum value of $\displaystyle \frac{\|A^{{\rm T}}\vector{r}_{k}\|_{2}}{\|A^{{\rm T}}\vector{b}\|_{2}}$}
\label{min_landt}
\begin{tabular}{|c|r|r|}
\hline
Method & Iter &  $\min \displaystyle \frac{\|A^{{\rm T}}\vector{r}_{k}\|_{2}}{\|A^{{\rm T}}\vector{b}\|_{2}}$ \\
\hline
\hline
AB-RRGMRES using NR-SSOR with 2 inner iterations and $\omega = 0.2$ & 1,268   & $5.28 \times 10^{-9}$ \\
\hline
AB-GMRES using NR-SSOR with 2 inner iterations and $\omega = 0.2$  & 1,216   &  $2.90 \times 10^{-8}$ \\
\hline
\end{tabular}
\end{table}

\begin{table}[htbp]
\centering
\caption{Computation results for the underdetermined least squares problem (${\rm Maragal\_6T}$) 
Iter: number of iterations, $\displaystyle \min \frac{\|A^{{\rm T}}\vector{r}_{k}\|_{2}}{\|A^{{\rm T}}\vector{b}\|_{2}}$ : 
minimum value of $\displaystyle \frac{\|A^{{\rm T}}\vector{r}_{k}\|_{2}}{\|A^{{\rm T}}\vector{b}\|_{2}}$}
\label{min_m6t}
\begin{tabular}{|c|r|r|}
\hline
Method & Iter &  $\min \displaystyle \frac{\|A^{{\rm T}}\vector{r}_{k}\|_{2}}{\|A^{{\rm T}}\vector{b}\|_{2}}$ \\
\hline
\hline
AB-RRGMRES using NR-SSOR with 10 inner iterations and $\omega = 1.0$ & 486   & $5.67 \times 10^{-8}$ \\
\hline
AB-GMRES using NR-SSOR with 10 inner iterations and $\omega = 1.1$  & 380   &  $2.55 \times 10^{-7}$ \\
\hline
\end{tabular}
\end{table}

\begin{table}[htbp]
\centering
\caption{Computation results for the underdetermined least squares problem (${\rm Maragal\_7T}$) 
Iter: number of iterations, $\displaystyle \min \frac{\|A^{{\rm T}}\vector{r}_{k}\|_{2}}{\|A^{{\rm T}}\vector{b}\|_{2}}$ : 
minimum value of $\displaystyle \frac{\|A^{{\rm T}}\vector{r}_{k}\|_{2}}{\|A^{{\rm T}}\vector{b}\|_{2}}$}
\label{min_m7t}
\begin{tabular}{|c|r|r|}
\hline
Method & Iter &  $\min \displaystyle \frac{\|A^{{\rm T}}\vector{r}_{k}\|_{2}}{\|A^{{\rm T}}\vector{b}\|_{2}}$ \\
\hline
\hline
AB-RRGMRES using NR-SSOR with 2 inner iterations and $\omega = 0.4$ & 1,344   & $2.96 \times 10^{-8}$ \\
\hline
AB-GMRES using NR-SSOR with 2 inner iterations and $\omega = 0.4$  & 974   &  $1.97 \times 10^{-7}$ \\
\hline
\end{tabular}
\end{table}

\begin{table}[htbp]
\centering
\caption{Computation results for the underdetermined least squares problem (${\rm Maragal\_8}$) 
Iter: number of iterations, $\displaystyle \min \frac{\|A^{{\rm T}}\vector{r}_{k}\|_{2}}{\|A^{{\rm T}}\vector{b}\|_{2}}$ : 
minimum value of $\displaystyle \frac{\|A^{{\rm T}}\vector{r}_{k}\|_{2}}{\|A^{{\rm T}}\vector{b}\|_{2}}$}
\label{min_m8}
\begin{tabular}{|c|r|r|}
\hline
Method & Iter &  $\min \displaystyle \frac{\|A^{{\rm T}}\vector{r}_{k}\|_{2}}{\|A^{{\rm T}}\vector{b}\|_{2}}$ \\
\hline
\hline
AB-RRGMRES using NR-SSOR with 4 inner iterations and $\omega = 1.0$ & 445   & $4.83 \times 10^{-8}$ \\
\hline
AB-GMRES using NR-SSOR with 4 inner iterations and $\omega = 0.9$  & 426   &  $1.03 \times 10^{-7}$ \\
\hline
\end{tabular}
\end{table}

\begin{table}[htbp]
\centering
\caption{Computation results for the underdetermined least squares problem (${\rm lp\_cre\_a}$) 
Iter: number of iterations, $\displaystyle \min \frac{\|A^{{\rm T}}\vector{r}_{k}\|_{2}}{\|A^{{\rm T}}\vector{b}\|_{2}}$ : 
minimum value of $\displaystyle \frac{\|A^{{\rm T}}\vector{r}_{k}\|_{2}}{\|A^{{\rm T}}\vector{b}\|_{2}}$}
\label{min_crea}
\begin{tabular}{|c|r|r|}
\hline
Method & Iter &  $\min \displaystyle \frac{\|A^{{\rm T}}\vector{r}_{k}\|_{2}}{\|A^{{\rm T}}\vector{b}\|_{2}}$ \\
\hline
\hline
AB-RRGMRES using NR-SSOR with 8 inner iterations and $\omega = 1.0$ & 222   & $9.34 \times 10^{-15}$ \\
\hline
AB-GMRES using NR-SSOR with 8 inner iterations and $\omega = 0.2$  & 280   &  $8.40 \times 10^{-11}$ \\
\hline
\end{tabular}
\end{table}

We observe the following from Table \ref{landT_cpu} and Table \ref{min_landt} for ${\rm landmarkT}$.

\begin{itemize}

\item When the convergence criterion is 
$\displaystyle \frac{\|A^{{\rm T}}\vector{r}_{k}\|_{2}}{\|A^{{\rm T}}\vector{b}\|_{2}} < 10^{-8}$,
AB-RRGMRES using NR-SSOR with 2 inner iterations and $\omega = 0.2$ 
is $2.03$ times faster 
in computation time compared to
AB-RRGMRES using $B = A^{\rm T}$ and $2.10$ of $B=\{{\rm diag}(A^{\rm T}A)\}^{-1} A^{\rm T}$.
Furthermore, the ratio of reduction of the computation time is 
nearly equal to the ratio of reduction of the number of iterations for the convergence.

\item The minimum value of 
$\frac{\|A^{\rm T}\vector{r}_{k}\|_{2}}{\|A^{\rm T}\vector{b}\|_{2}}$ for 
AB-RRGMRES 
using NR-SSOR with 2 inner iteration and $\omega = 0.2$ is $1.82 \times 10^{-1}$ times 
that of AB-GMRES using NR-SSOR with 2 inner iteration and $\omega = 0.2$.

\end{itemize}

We observe the following from Table \ref{m6T_cpu} and Table \ref{min_m6t} for ${\rm Maragal\_6T}$.

\begin{itemize}

\item When the convergence criterion is 
$\displaystyle \frac{\|A^{{\rm T}}\vector{r}_{k}\|_{2}}{\|A^{{\rm T}}\vector{b}\|_{2}} < 10^{-6}$,
AB-RRGMRES using NR-SSOR with 10 inner iterations and $\omega = 1.0$ 
is $8.08$ times faster 
in computation time compared to
AB-RRGMRES using $B = A^{\rm T}$ and $10.5$ of $B=\{{\rm diag}(A^{\rm T}A)\}^{-1} A^{\rm T}$.
Furthermore, the ratio of reduction of the computation time is higher
than that of number of iterations because AB-RRGMRES solves
the least squares problem with Hessenberg matrix whose size is equal to number of iterations
and the order of the computational cost of the Arnoldi process is higher
than the number of iterations.

\item The minimum value of 
$\frac{\|A^{\rm T}\vector{r}_{k}\|_{2}}{\|A^{\rm T}\vector{b}\|_{2}}$ for 
AB-RRGMRES 
using NR-SSOR with 10 inner iteration and $\omega = 1.0$ is $2.22 \times 10^{-1}$ times 
that of AB-GMRES using NR-SSOR with 10 inner iteration and $\omega = 1.1$.

\end{itemize}

We observe the following from Table \ref{m7T_cpu} and Table \ref{min_m7t} for ${\rm Maragal\_7T}$.

\begin{itemize}

\item When the convergence criterion is 
$\displaystyle \frac{\|A^{{\rm T}}\vector{r}_{k}\|_{2}}{\|A^{{\rm T}}\vector{b}\|_{2}} < 10^{-7}$,
AB-RRGMRES using NR-SSOR with 10 inner iterations and $\omega = 1.0$ 
is $6.16$ times faster 
in computation time compared to
AB-RRGMRES using $B = A^{\rm T}$ and $7.28$ of $B=\{{\rm diag}(A^{\rm T}A)\}^{-1} A^{\rm T}$.
Furthermore, the ratio of reduction of the computation time is higher
than that of number of iterations because AB-RRGMRES solves
the least squares problem with Hessenberg matrix whose size is equal to number of iterations
and the order of the computational cost of the Arnoldi process is higher
than the number of iterations.

\item The minimum value of 
$\frac{\|A^{\rm T}\vector{r}_{k}\|_{2}}{\|A^{\rm T}\vector{b}\|_{2}}$ for 
AB-RRGMRES 
using NR-SSOR with 2 inner iteration and $\omega = 0.4$ is $1.50 \times 10^{-1}$ times 
that of AB-GMRES using NR-SSOR with 2 inner iteration and $\omega = 0.4$.

\end{itemize}

We observe the following from Table \ref{m7T_cpu} and Table \ref{min_m8} for ${\rm Maragal\_8}$.

\begin{itemize}

\item When the convergence criterion is 
$\displaystyle \frac{\|A^{{\rm T}}\vector{r}_{k}\|_{2}}{\|A^{{\rm T}}\vector{b}\|_{2}} < 10^{-7}$,
AB-RRGMRES using NR-SSOR with 4 inner iterations and $\omega = 1.0$ 
is $87.9$ times faster 
in computation time compared to
AB-RRGMRES using $B = A^{\rm T}$ and $17.6$ of $B=\{{\rm diag}(A^{\rm T}A)\}^{-1} A^{\rm T}$.
Furthermore, the ratio of reduction of the computation time is higher
than that of number of iterations because AB-RRGMRES solves
the least squares problem with Hessenberg matrix whose size is equal to number of iterations
and the order of the computational cost of the Arnoldi process is higher
than the number of iterations.

\item The minimum value of 
$\frac{\|A^{\rm T}\vector{r}_{k}\|_{2}}{\|A^{\rm T}\vector{b}\|_{2}}$ for 
AB-RRGMRES 
using NR-SSOR with 4 inner iteration and $\omega = 1.0$ is $4.69 \times 10^{-1}$ times 
that of AB-GMRES using NR-SSOR with 4 inner iteration and $\omega = 0.9$.

\end{itemize}

We observe the following from Table \ref{cre_a_cpu} and Table \ref{min_crea} for ${\rm lp\_cre\_a}$.

\begin{itemize}

\item When the convergence criterion is 
$\displaystyle \frac{\|A^{{\rm T}}\vector{r}_{k}\|_{2}}{\|A^{{\rm T}}\vector{b}\|_{2}} < 10^{-10}$,
AB-RRGMRES using NR-SSOR with 8 inner iterations and $\omega = 1.0$ 
is $79.9$ times faster 
in computation time compared to
AB-RRGMRES using $B = A^{\rm T}$ and $19.6$ of $B=\{{\rm diag}(A^{\rm T}A)\}^{-1} A^{\rm T}$.
Furthermore, the ratio of reduction of the computation time is higher
than that of number of iterations because AB-RRGMRES solves
the least squares problem with Hessenberg matrix whose size is equal to number of iterations
and the order of the computational cost of the Arnoldi process is higher
than the number of iterations.

\item The minimum value of 
$\frac{\|A^{\rm T}\vector{r}_{k}\|_{2}}{\|A^{\rm T}\vector{b}\|_{2}}$ for 
AB-RRGMRES 
using NR-SSOR with 8 inner iteration and $\omega = 1.0$ is $1.11 \times 10^{-4}$ times 
that of AB-GMRES using NR-SSOR with 8 inner iteration and $\omega = 0.2$.

\end{itemize}

\subsection{Comparison with MINRES-QLP for underdetermined least \\ squares problem}
For the underdetermined least squares problems
of ${\rm Maragal\_5T}$ and ${\rm lp\_dfl001}$
in section \ref{NumLS},
we compare $\displaystyle \frac{\|A^{{\rm T}}\vector{r}_{k}\|_{2}}{\|A^{{\rm T}}\vector{b}\|_{2}}$
of MINRES-QLP with that of AB-RRGMRES using $B = A^{\rm T}$ $(C = I)$. 
The method for applying MINRES-QLP to 
the underdetermined least squares problems ${\rm Maragal\_5T}$ and ${\rm lp\_dfl001}$
is explained in section \ref{MINQLP}.

Fig. \ref{qlp_m5}
shows $\displaystyle \frac{\|A^{{\rm T}}\vector{r}_{k}\|_{2}}{\|A^{{\rm T}}\vector{b}\|_{2}}$ versus 
the number of iterations $k$ for MINRES-QLP and AB-RRGMRES using $B = A^{\rm T}$ 
applied to the underdetermined least squares problem ${\rm Maragal\_5T}$.

Fig. \ref{qlp_lp}
shows $\displaystyle \frac{\|A^{{\rm T}}\vector{r}_{k}\|_{2}}{\|A^{{\rm T}}\vector{b}\|_{2}}$ versus 
the number of iterations $k$ for MINRES-QLP and AB-RRGMRES using $B = A^{\rm T}$ 
applied to the underdetermined least squares problem ${\rm lp\_dfl001}$.

\begin{figure}[htbp]
\begin{center}
\includegraphics[scale=0.55]{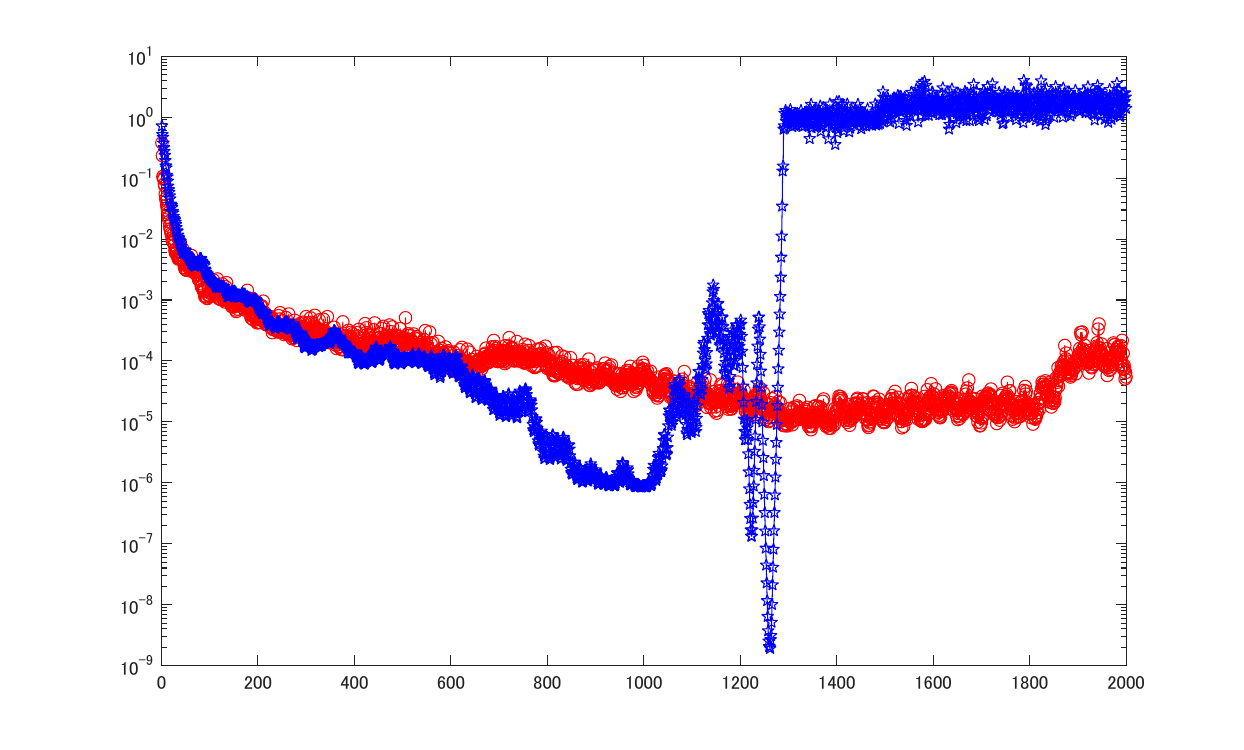}	
\caption{$\displaystyle \frac{\|A^{{\rm T}}\vector{r}_{k}\|_{2}}{\|A^{{\rm T}}\vector{b}\|_{2}}$ versus 
	the number of iterations for AB-RRGMRES using $B = A^{\rm T}$ (blue, $\star$)
and MINRES-QLP applied to the underdetermined least squares problem 
${\rm Maragal\_5T}$(red, $\circ$)}
\label{qlp_m5}
\end{center}
\end{figure}

\begin{figure}[htbp]
\begin{center}
\includegraphics[scale=0.55]{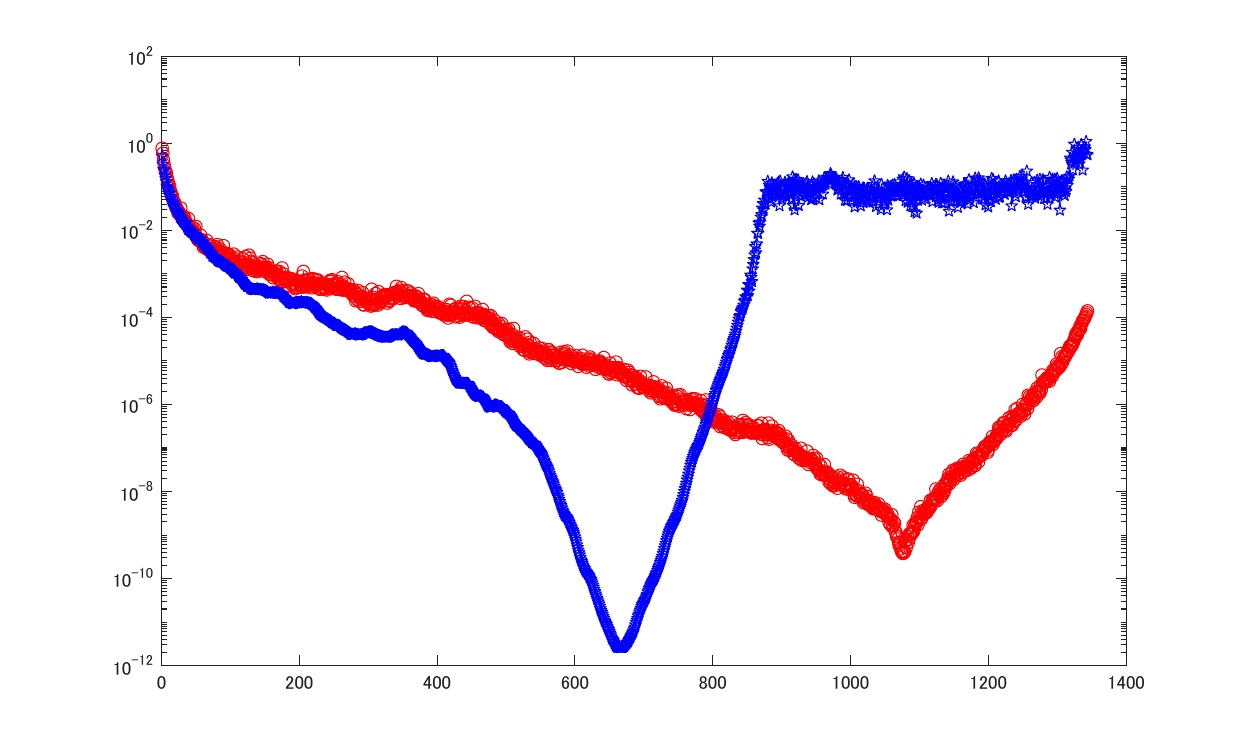}	
\caption{$\displaystyle \frac{\|A^{{\rm T}}\vector{r}_{k}\|_{2}}{\|A^{{\rm T}}\vector{b}\|_{2}}$ versus 
	the number of iterations for AB-RRGMRES using $B = A^{\rm T}$ (blue, $\star$)
and MINRES-QLP applied to the underdetermined least squares problem 
${\rm lp\_dfl001}$(red, $\circ$)}
\label{qlp_lp}
\end{center}
\end{figure}

We observe the following from Fig. \ref{qlp_m5} and \ref{qlp_lp}.

\begin{itemize}
\item For the least squares problem ${\rm Maragal\_5T}$,	
the minimum value of
$\displaystyle \frac{\|A^{{\rm T}}\vector{r}_{k}\|_{2}}{\|A^{{\rm T}}\vector{b}\|_{2}}$
for AB-RRGMRES with $B = A^{\rm T}$ is less than $10^{-8}$,
which is almost $10^{-4}$ times that of MINRES-QLP.
On the other hand, for the least squares problem ${\rm lp\_dfl001}$,
$\displaystyle \frac{\|A^{{\rm T}}\vector{r}_{k}\|_{2}}{\|A^{{\rm T}}\vector{b}\|_{2}}$
for AB-RRGMRES with $B = A^{\rm T}$ is almost $6.8 \times 10^{-3}$ times that of MINRES-QLP,
 and the former reaches the minimum quite earlier than the latter.

\end{itemize}

\section{Concluding remarks}\label{ConcL} 
We proposed applying the NR-SSOR right preconditioned RRGMRES 
for arbitrary singular systems and least squares problems,
and proved that the method
determines a solution of arbitrary singular systems
and least squares problems even if the systems are inconsistent.
Some numerical experiments on GP, index 2 inconsistent systems and 
underdetermined least squares problems showed that the proposed method
is more efficient and robust than
using $B = A^{\rm T}$, $B=\{{\rm diag}(A^{\rm T}A)\}^{-1} A$,
and obtains smaller residuals compared to MINRES-QLP
and the NR-SSOR right preconditioned GMRES.

\section*{Declarations}

Funding: We, Kota Sugihara and Ken Hayami, declare that there is no funding for the 
manuscript ``NR-SSOR right preconditioned RRGMRES for non-range-symmetric systems and rank-deficient 
rectangular least squares problems'' sent for possible publication to Numerical Algorithms Journal. 
More exactly: ``The research in this paper
received no specific grant from any funding agency in the public, private, or not-for-profit sectors."

\begin{appendices}
\section{Convergence theory of RRGMRES for nonsingular systems}\label{rrnonsg}
In this appendix, we show the convergence theory of RRGMRES for nonsingular systems,
which we think is new.

Lemma \ref{RReq} holds for both cases when $A$ is nonsingular and singular.
\begin{lemma}\label{RReq}
Assume $A \in \rnn$.
Let $\vector{x}_0$ be the initial iterate and $\vector{r}_0 = \vector{b} - A\vector{x}_0$.

When RRGMRES is applied to $A\vector{x} = \vector{b}$ with the initial iterate $\vector{x}_0$,
\begin{eqnarray*}
\|A\vector{x}_k - \vector{b}\|_{2}^{2}& =& \min_{\vector{y}\in \rk}
\|H_{k+1,k}\vector{y} - V_{k+1}^{\rm T}\vector{r}_0 \|_{2}^{2} \\
 & & + \|(I_{n} - V_{k+1}V_{k+1}^{\rm T})\vector{r}_0 \|_2^{2}.
\end{eqnarray*}
where $\vector{x}_{k} = \vector{x}_0 + V_{k}\vector{y}_{k}$ and $\vector{y}_{k}$ is the solution of the above minimization problem.
\end{lemma}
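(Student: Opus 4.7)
The plan is to reduce the minimization problem to one involving only the Hessenberg matrix by exploiting the Arnoldi relation together with an orthogonal decomposition of $\vector{r}_0$ with respect to $\ran(V_{k+1})$. The essential novelty relative to standard GMRES is that, in RRGMRES, the first Arnoldi vector is $\vector{v}_1 = A\vector{r}_0/\|A\vector{r}_0\|_2$ rather than $\vector{r}_0/\|\vector{r}_0\|_2$, so in general $\vector{r}_0 \notin \ran(V_{k+1})$, and this is precisely what forces the appearance of the second term in the claimed identity.

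First I would record the Arnoldi recursion produced by Algorithm 2, namely $AV_k = V_{k+1}H_{k+1,k}$, where $V_{k+1}$ has orthonormal columns. Substituting $\vector{x}_k = \vector{x}_0 + V_k\vector{y}$ into the residual gives
\begin{equation*}
\vector{r}_k = \vector{b} - A\vector{x}_k = \vector{r}_0 - AV_k\vector{y} = \vector{r}_0 - V_{k+1}H_{k+1,k}\vector{y}.
\end{equation*}

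Next, I would split $\vector{r}_0$ using the orthogonal projector onto $\ran(V_{k+1})$:
\begin{equation*}
\vector{r}_0 = V_{k+1}V_{k+1}^{\trans}\vector{r}_0 + (I_n - V_{k+1}V_{k+1}^{\trans})\vector{r}_0.
\end{equation*}
Plugging this in, the residual becomes
\begin{equation*}
\vector{r}_k = V_{k+1}\bigl(V_{k+1}^{\trans}\vector{r}_0 - H_{k+1,k}\vector{y}\bigr) + (I_n - V_{k+1}V_{k+1}^{\trans})\vector{r}_0.
\end{equation*}
The first summand lies in $\ran(V_{k+1})$ and the second in $\ran(V_{k+1})^{\perp}$, so by the Pythagorean theorem and by $\|V_{k+1}\vector{w}\|_2 = \|\vector{w}\|_2$ (orthonormality of columns),
\begin{equation*}
\|\vector{r}_k\|_2^2 = \|V_{k+1}^{\trans}\vector{r}_0 - H_{k+1,k}\vector{y}\|_2^2 + \|(I_n - V_{k+1}V_{k+1}^{\trans})\vector{r}_0\|_2^2.
\end{equation*}

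Finally, since only the first term depends on $\vector{y}$, minimizing over $\vector{y} \in \rk$ yields the stated identity, and $\vector{y}_k$ is the minimizer of that reduced least squares problem (with Hessenberg coefficient matrix $H_{k+1,k}$). I do not expect any substantive obstacle: the one point that requires mild care is distinguishing RRGMRES from GMRES, where the second, $\vector{y}$-independent term vanishes because $\vector{r}_0 = \beta\vector{v}_1 \in \ran(V_{k+1})$; in RRGMRES this cancellation does not occur, which justifies keeping both terms in the final expression.
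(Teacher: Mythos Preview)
Your proposal is correct and follows essentially the same approach as the paper: both use the Arnoldi relation $AV_k = V_{k+1}H_{k+1,k}$, split $\vector{r}_0$ into its components in $\ran(V_{k+1})$ and $\ran(V_{k+1})^{\perp}$, and then apply the Pythagorean theorem together with the orthonormality of the columns of $V_{k+1}$. Your write-up is in fact slightly more explicit about invoking the Arnoldi relation and about why only the first term depends on $\vector{y}$, but the argument is the same.
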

\begin{proof}
\begin{eqnarray*}
\|A\vector{x}_k - \vector{b}\|_{2}^{2}
& =& \min_{\vector{y}\in \rk}\|V_{k+1}H_{k+1,k}\vector{y} - V_{k+1}V_{k+1}^{\rm T}\vector{r}_0 \\
                                  & & + (V_{k+1}V_{k+1}^{\rm T} -I_{n})\vector{r}_0 \|_2^{2}.
\end{eqnarray*}
Here, 
since $V_{k+1}V_{k+1}^{\rm T}\vector{r}_0$ is the projection of $\vector{r}_0$ onto $\ran (V_{k+1})$, 
$(I_{n} - V_{k+1}V_{k+1}^{\rm T})\vector{r}_0$ is the projection of $\vector{r}_0$ onto $\ran (V_{k+1})^{\perp}$.\\
Therefore, $V_{k+1}(H_{k+1,k}\vector{y}_{k} - V_{k+1}^{\rm T}\vector{r}_0) \perp 
(I_{n} - V_{k+1}V_{k+1}^{\rm T})\vector{r}_0$.

Then, 
\begin{eqnarray*}
\|V_{k+1}H_{k+1,k}\vector{y} - V_{k+1}V_{k+1}^{\rm T}\vector{r}_0 
+ (V_{k+1}V_{k+1}^{\rm T} -I_{n})\vector{r}_0 \|_2^{2} 
                                      & = & \|V_{k+1}(H_{k+1,k}\vector{y}_{k} - V_{k+1}^{\rm T}\vector{r}_0)\|_{2}^{2}\\
                                       & & + \|(V_{k+1}V_{k+1}^{\rm T} - I_{n})\vector{r}_0\|_{2}^{2}
\end{eqnarray*}
Therefore, 
\begin{eqnarray*}
\|A\vector{x}_k - \vector{b}\|_{2}^{2}& =& \min_{\vector{y}\in \rk}\|V_{k+1}(H_{k+1,k}\vector{y}_{k} 
                                         - V_{k+1}^{\rm T}\vector{r}_0)\|_{2}^{2} + 
                                         \|(V_{k+1}V_{k+1}^{\rm T} - I_{n})\vector{r}_0 \|_{2}^{2} \\
                                      & = & \min_{\vector{y}\in \rk}\|H_{k+1,k}\vector{y}_{k} 
                                         - V_{k+1}^{\rm T}\vector{r}_0 \|_{2}^{2} + 
                                         \|(V_{k+1}V_{k+1}^{\rm T} - I_{n})\vector{r}_0 \|_{2}^{2} \\
\end{eqnarray*}
\end{proof}

\begin{theorem}\label{RRbr}
Let $A \in \rnn$ be a nonsingular matrix. 
Then, 
the iterate $\vector{x}_k$ of RRGMRES is an exact solution of $A\vector{x}=\vector{b}$ 
if 
$h_{k+1,k} = 0$.

Furthermore, RRGMRES converges in at most $n$ iterations.

\end{theorem}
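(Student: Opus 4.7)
The plan is to exploit the Arnoldi-type recursion that RRGMRES produces, $AV_k = V_{k+1}H_{k+1,k}$, together with the special starting vector $\vector{v}_1 = A\vector{r}_0/\|A\vector{r}_0\|_2$, and then to use nonsingularity of $A$ to promote $A\vector{r}_0\in\ran(V_k)$ into $\vector{r}_0\in\ran(V_k)$. Once this is done, Lemma~\ref{RReq} immediately delivers a zero residual.

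First I would specialize the recursion at the breakdown step. If $h_{k+1,k}=0$, the vector $\hat{\vector{v}}_{k+1}$ vanishes, the bottom row of $H_{k+1,k}$ is identically zero, and the columnwise Arnoldi identity collapses to $AV_k = V_kH_{k,k}$, where $H_{k,k}\in\rkk$ is the leading $k\times k$ block. Because the columns of $V_k$ are orthonormal and $A$ is nonsingular, $H_{k,k}$ is also nonsingular. In particular $\ran(V_k)$ is $A$-invariant, and since $A$ restricted to $\ran(V_k)$ is an injective linear self-map of a $k$-dimensional subspace, it is a bijection of $\ran(V_k)$ onto itself.

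Next I would establish $\vector{r}_0\in\ran(V_k)$. By construction $A\vector{r}_0=\|A\vector{r}_0\|_2\,\vector{v}_1\in\ran(V_k)$, and the bijectivity of $A|_{\ran(V_k)}$ then supplies a unique preimage of $A\vector{r}_0$ inside $\ran(V_k)$; by global injectivity of $A$ this preimage must be $\vector{r}_0$ itself. Hence the second term $\|(I_n - V_{k+1}V_{k+1}^{\trans})\vector{r}_0\|_2$ in Lemma~\ref{RReq} vanishes (at breakdown, the $V_{k+1}$ appearing there is read as $V_k$ padded by a zero column, since no new orthonormal direction is produced). Choosing $\vector{y}_k = H_{k,k}^{-1}V_k^{\trans}\vector{r}_0$ annihilates the remaining term because then $H_{k+1,k}\vector{y}_k = V_{k+1}^{\trans}\vector{r}_0$, so $\|\vector{b}-A\vector{x}_k\|_2 = 0$ and $\vector{x}_k$ is an exact solution.

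For the ``at most $n$ iterations'' claim, the Arnoldi procedure cannot run for $n$ full steps without breakdown: otherwise $\vector{v}_1,\ldots,\vector{v}_{n+1}$ would form $n+1$ mutually orthonormal vectors in $\rn$, which is impossible. Hence $h_{j+1,j}=0$ for some $j\le n$, and by the first part $\vector{x}_j$ is exact. The main obstacle, to the extent there is one, is the careful bookkeeping at the breakdown step, namely interpreting $V_{k+1}$ and $H_{k+1,k}$ when the last subdiagonal entry vanishes; working directly with $V_k$ and $H_{k,k}$ makes this transparent. The decisive use of the nonsingularity hypothesis is in lifting $A\vector{r}_0\in\ran(V_k)$ to $\vector{r}_0\in\ran(V_k)$ via the bijection $A|_{\ran(V_k)}$, which is exactly the step that would fail without full rank.
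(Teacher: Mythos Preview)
Your proof is correct and follows essentially the same approach as the paper: collapse the Arnoldi relation to $AV_k = V_kH_{k,k}$ at breakdown, show $H_{k,k}$ is nonsingular, establish $\vector{r}_0\in\ran(V_k)$, and invoke Lemma~\ref{RReq}. The only cosmetic difference is in the step $\vector{r}_0\in\ran(V_k)$: the paper argues by contradiction (if $\vector{r}_0,\vector{v}_1,\ldots,\vector{v}_k$ were independent then so would be their images under $A$, yet all $k+1$ images lie in the $k$-dimensional space $\ran(V_k)$), whereas you argue directly via the bijection $A|_{\ran(V_k)}$; these are two phrasings of the same idea.
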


\begin{proof}
Let $\vector{x}_0$ be the initial iterate and $\vector{r}_0 = \vector{b} - A\vector{x}_0$.

\begin{eqnarray*}
\|A\vector{x}_k - \vector{b}\|_{2}^{2}
& =& \min_{\vector{y}\in \rk}\|H_{k+1,k}\vector{y} - V_{k+1}^{\rm T}\vector{r}_0 \|_{2}^{2} \\
                                  & & + \|(I_{n} - V_{k+1}V_{k+1}^{\rm T})\vector{r}_0 \|_2^{2}.
\end{eqnarray*}

Assume that $h_{k+1,k}=0$.
Since $AV_{k} = V_{k+1}H_{k+1,k}$, we have
$AV_{k} = V_{k}H_{k,k}$.

Then, 
\begin{eqnarray*}
\|A\vector{x}_k - \vector{b}\|_{2}^{2}
& =& \min_{\vector{y}\in \rk}\|H_{k,k}\vector{y} - V_{k}^{\rm T}\vector{r}_0 \|_{2}^{2} \\
                                  &  &+ \|(I_{n} - V_{k}V_{k}^{\rm T})\vector{r}_0 \|_2^{2}.
\end{eqnarray*}

Firstly, we will prove that $H_{k,k}$ is nonsingular and \\
$\displaystyle \min_{\vector{y}\in \rk}\|H_{k,k}\vector{y} - V_{k}^{\rm T}\vector{r}_0 \|_{2} = 0$.

Here, since $h_{k+1,k} = 0$, we have $AV_{k} = V_{k}H_{k,k}$.
$A$ is nonsingular and $\rank V_{k} = k$. Then, $\rank AV_{k} = k$.

If $\rank H_{k,k} < k$, we have $\rank V_{k}H_{k,k} < k$. 
Therefore, $\rank H_{k,k} = k$. Since \\ $H_{k,k} \in \rkk$, $H_{k,k}$ is nonsingular.

Then, for $\displaystyle \vector{y}_{k}
 = \argmin_{\vector{y}\in \rk}\|H_{k,k}\vector{y} - V_{k}^{\rm T}\vector{r}_0 \|_{2}$,
$\|H_{k,k}\vector{y}_{k} - V_{k}^{\rm T}\vector{r}_0 \|_{2} = 0$.

Next, we will prove that
$\|(I_{n} - V_{k}V_{k}^{\rm T})\vector{r}_0 \|_2 = \vector{0}$ if $h_{k+1,k} = 0$.

Assume $(I_{n} - V_{k}V_{k}^{\rm T})\vector{r}_0 \neq \vector{0}$.
Then, $\vector{r}_0$ and all column vectors of $V_{k}$ are linearly independent.

Since $A$ is nonsingular, $A\vector{r}_0$ and all column vectors of $AV_{k}$ are linearly independent.
However, since $h_{k+1,k} = 0$, $A\vector{r}_0$ and all column vectors of $AV_{k}$ are linearly dependent.
This is a contradiction.
Therefore, if $h_{k+1,k} = 0$, we have $(I_{k} - V_{k}V_{k}^{\rm T})\vector{r}_0 = \vector{0}$

Here, there exists $k \leq n$ such $h_{k+1,k} = 0$.
Then, if $A$ is nonsingular, RRGMRES converges to a solution of $A\vector{x} = \vector{b}$ at the $k$th step.
Furthermore, since there exists $k \leq n$ such $h_{k+1,k} = 0$, RRGMRES converges in at most $n$ iterations.
\end{proof}

\section{Convergence theory of RRGMRES for singular systems}\label{rrsg}

Next, we show the convergence of RRGMRES for singular systems.
The statement except the number of iterations required for the convergence of RRGMRES in Theorem \ref{RRsg}
is shown in \cite[Theorem 3.2]{CLR}.
In this paper, we prove Theorem \ref{RRsg} in a different way from \cite[Theorem 3.2]{CLR}.

\begin{theorem}\label{RRsg}
RRGMRES determines 
a least squares solution 
of $$\min_{{\vector{x}}\in \mathbb{R}^{n}} \|\vector{b} - A\vector{x} \|_{2}$$
without breakdown for all $\vector{b}\in \mathbb{R}^{n}$
where $A$ may be singular if and only if
$\ran(A) = \ran(A^{\rm {T}})$. 

If $\vector{x}_0 \in \ran(A)$, the above least squares solution 
is a minimum norm solution.

Furthermore,
if $r = {\rm rank}A = {\rm dim}\ran(A) > 0$, 
RRGMRES converges in 
at most $r$ iterations.

\end{theorem}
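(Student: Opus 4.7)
The plan is to exploit the range-symmetry hypothesis $\ran(A) = \ran(A^\trans)$, which yields the orthogonal decomposition $\rn = \ran(A) \oplus \nul(A)$ (since $\nul(A) = \ran(A^\trans)^\perp$ and range-symmetry gives $\nul(A^\trans) = \nul(A)$). First I would split $\vector{r}_0 = \vector{p} + \vector{q}$ with $\vector{p} \in \ran(A)$ and $\vector{q} \in \nul(A)$. Because $A\vector{q} = \vector{0}$, we have $A\vector{r}_0 = A\vector{p} \in \ran(A)$, so every vector $A^j \vector{r}_0$ with $j \geq 1$ lies in $\ran(A)$, and consequently the search space ${\cal K}_k(A, A\vector{r}_0) \subseteq \ran(A)$. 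For any $\vector{z}$ in this subspace, $A\vector{z} \in \ran(A)$ is orthogonal to $\vector{q}$, giving
\[
\|\vector{r}_0 - A\vector{z}\|_2^2 = \|\vector{p} - A\vector{z}\|_2^2 + \|\vector{q}\|_2^2,
\]
so minimising the RRGMRES objective over $\vector{z}$ reduces to minimising $\|\vector{p} - A\vector{z}\|_2$, with attainable minimum $\|\vector{q}\|_2$, which is precisely the least-squares residual.

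The central step is to show this minimum is actually reached inside ${\cal K}_k(A, A\vector{r}_0)$ for some $k \leq r$. Let $\tilde{A}$ denote the restriction of $A$ to $\ran(A)$; range-symmetry forces $\ran(A) \cap \nul(A) = \{\vector{0}\}$, so $\tilde{A}: \ran(A) \to \ran(A)$ is a bijection. Let $d$ be the grade of $\vector{p}$ with respect to $\tilde{A}$, so that ${\cal K}'_d \equiv \spn\{\vector{p}, \tilde{A}\vector{p}, \ldots, \tilde{A}^{d-1}\vector{p}\}$ is $\tilde{A}$-invariant of dimension $d \leq r$. Because $\tilde{A}$ is nonsingular, $\tilde{A}{\cal K}'_d = {\cal K}'_d$, which in particular implies $\tilde{A}^{-1}\vector{p} \in {\cal K}'_d$. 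Using $A\vector{r}_0 = \tilde{A}\vector{p}$, one verifies ${\cal K}_d(A, A\vector{r}_0) = \tilde{A}{\cal K}'_d = {\cal K}'_d$, so $\vector{z}^{\ast} := \tilde{A}^{-1}\vector{p}$ belongs to ${\cal K}_d(A, A\vector{r}_0)$ and satisfies $A\vector{z}^{\ast} = \vector{p}$; hence the least-squares minimum is attained at step $d \leq r$.

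For the ``without breakdown'' claim, Arnoldi terminates ($h_{j+1,j} = 0$) exactly when ${\cal K}_j(A, A\vector{r}_0)$ first becomes $A$-invariant. Since $\tilde{A}$ is nonsingular on $\ran(A)$, $A$-invariance of $\tilde{A}{\cal K}'_j$ is equivalent to $\tilde{A}$-invariance of ${\cal K}'_j$, so the first such $j$ is exactly $d$; by the previous paragraph together with Lemma \ref{RReq}, the least-squares solution has already been obtained at that step (a happy breakdown). For the minimum-norm statement, every correction $\vector{z}_k$ lies in ${\cal K}_k(A, A\vector{r}_0) \subseteq \ran(A)$, so $\vector{x}_0 \in \ran(A)$ forces $\vector{x}_k \in \ran(A) = \ran(A^\trans) = \nul(A)^\perp$, which is exactly the condition characterising the minimum-norm least-squares solution. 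The main obstacle I anticipate is the careful bookkeeping around the Arnoldi termination step: verifying that the first $j$ with $h_{j+1,j} = 0$ is genuinely the grade $d$ and not some earlier degeneracy, and cleanly identifying ${\cal K}_d(A, A\vector{r}_0)$ with ${\cal K}'_d$ so that $\tilde{A}^{-1}\vector{p}$ truly lies in the available search space at that step.
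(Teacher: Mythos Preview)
Your argument is correct and rests on the same core observation as the paper --- range-symmetry makes the restriction of $A$ to $\ran(A)$ a bijection --- but the execution is genuinely different. The paper fixes an orthonormal basis $Q = [Q_1, Q_2]$ for $\ran(A) \oplus \ran(A)^{\perp}$, obtains the block form $Q^\trans A Q$ with $A_{12} = 0$ and $A_{11}$ nonsingular, and then literally rewrites every line of the RRGMRES algorithm in these coordinates (Algorithms 4 and 5) to exhibit that the $\ran(A)$ component is exactly RRGMRES on the nonsingular $r \times r$ system $A_{11}\bx^1 = \bb^1$; Theorem \ref{RRbr} then supplies the $\leq r$ iteration bound and the happy-breakdown conclusion. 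You instead stay coordinate-free, working with the abstract restriction $\tilde{A} = A|_{\ran(A)}$ and the grade $d$ of $\bp$, and use $\tilde{A}$-invariance of ${\cal K}'_d$ together with the bijectivity of $\tilde{A}$ to place $\tilde{A}^{-1}\bp$ directly in the step-$d$ search space. Your route is shorter and self-contained (it does not need the separate nonsingular theorem), while the paper's explicit matrix identification pays off later: the equality $H_{k+1,k} = \tilde{H}_{k+1,k}$ between Hessenberg matrices is reused in Theorem \ref{RRbr2} to bound $\sigma_k(H_{k+1,k})$ below by $\sigma_r(A)$, something your argument does not immediately deliver. The bookkeeping worry you raise --- that the first Arnoldi termination index coincides with the grade $d$ --- is handled by exactly the equivalence you state (nonsingularity of $\tilde{A}$ on $\ran(A)$ forces $A$-invariance of ${\cal K}_j(A, A\br_0) = \tilde{A}{\cal K}'_j$ to be equivalent to $\tilde{A}$-invariance of ${\cal K}'_j$), so no earlier degeneracy can occur.
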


\begin{proof}
Let 
\begin{eqnarray}
\vector{q}_{1},.,\vector{q}_{r} & : & \rm{orthonormal~basis}~\rm{of}~R(A) \label{eq:basis1} \\
\vector{q}_{r+1},.,\vector{q}_{n} & : & \rm{orthonormal~basis}~\rm{of}~R(A)^{{{\perp}}} \label{eq:basis2} 
\end{eqnarray} 

We define the matrices $Q_{1},Q_{2},Q$ as follows.
\begin{eqnarray*}
Q_{1} : & = &  [\vector{q}_{1},....,\vector{q}_{r}] \in \mathbb{R}^{n \times r}\\
Q_{2} : & = & [\vector{q}_{r+1},....,\vector{q}_{n}] \in \mathbb{R}^{n \times (n-r)} \\
Q     :  & = & [Q_{1}, Q_{2}] \in \mathbb{R}^{n \times n}
\end{eqnarray*}

Let
${\rm I}_{n} \in \mathbb{R}^{n \times n}$ be the identity matrix.
\begin{equation}\label{eq:orgth1}
Q^{\rm T}Q = {\rm I}_{n}
\end{equation}
holds from (\ref{eq:basis1}),(\ref{eq:basis2}).

Then, from (\ref{eq:orgth1}),
\begin{equation}\label{eq:orgth2}
QQ^{\rm T} = {\rm I}_{n}
\end{equation}
holds.

Let $\hat{A}  : = Q^{\rm T}AQ \in \mathbb{R}^{n \times n}$.
Since ${Q_{2}}^{\rm T}AQ = 0$ holds, we have
\begin{eqnarray*}
\hat{A} = \left[
\begin{array}{cc}
{Q_{1}}^{\rm T}AQ_{1} & {Q_{1}}^{\rm T}AQ_{2} \\
0                                 &  0                            
\end{array}
\right].
\end{eqnarray*}

We let $A_{11} = Q_1^{\rm T}A Q_1$ and $A_{12} = Q_1^{\rm T}A Q_2$.
Since $\ran(A) = \ran(A^{\rm T})$, $A_{12} = 0$ (\cite[Theorem 2.5]{HS}).
Furthermore, $A_{11}$ is nonsingular (\cite[Lemma 2.4]{HS}).

Thus, we can decompose RRGMRES
into the $\ran(A)$ component and the $\ran(A)^{\perp}$
component, as follows.\\*

{\bf Algorithm 4 : Decomposed RRGMRES method (general case)}
\vspace{12pt}
\\~~~~~$\ran(A)$ component \hspace{4cm} $\ran(A)^{\perp}$ component
\vspace{12pt}
\\1: $\vector{b}^1 = Q_1^{\rm T}\vector{b}$ \hspace{7cm} $\vector{b}^2 = Q_2^{\rm T}\vector{b}$ \\
2: Choose initial approximate solution $\vector{x}_0$. \\
3: Compute $ \vector{x}_0^1 =  Q_1^{\rm T}\vector{x}_0$. \hspace{4cm} $ 
\vector{x}_0^2 =  Q_2^{\rm T}\vector{x}_0$\\
4:  $\hat{\vector{r}}_0^1 
= A_{11}(\vector{b}^1 - A_{11}\vector{x}_0^1 - A_{12} \vector{x}_0^2) 
+ A_{12} \vector{b}^2$
 \hspace{5cm} $\hat{\vector{r}}_0^2 = \vector{0} $   \\
5:  $\|\hat{\vector{r}}_0\|_2 = \|\hat{\vector{r}}_0^1\|_2 
= \| A_{11}(\vector{b}^1 - A_{11}\vector{x}_0^1 - A_{12} \vector{x}_0^2) 
+ A_{12} \vector{b}^2 \|_2$ \\
6: $\vector{v}_1^1 : = \hat{\vector{r}}_0^1/\|\hat{\vector{r}}_0\|_{2} = 
 \hat{\vector{r}}_0^1/\|\hat{\vector{r}}_0^1\|_{2}$ \hspace{4cm}
$\vector{v}_1^2 : = \hat{\vector{r}}_0^2/\|\hat{\vector{r}}_0\|_2 = \vector{0}$\\
7: for $j=1$ until convergence do \\
8: \hspace{0.4cm}$h_{i,j}=
(\vector{v}_i^1,A_{11}\vector{v}_j^1 + A_{12}\vector{v}_j^2)
=  (\vector{v}_i^1,A_{11}\vector{v}_j^1)$~~$(i=1,2,...,j)$\\
9: \hspace{0.4cm}$\hat{\vector{v}}_{j+1}^1 = A_{11}\vector{v}_j^1  
- \sum_{i=1}^j h_{i,j}\vector{v}_i^1$ 
\hspace{2cm}
$\hat{\vector{v}}_{j+1}^2 = -{\sum_{i=1}^j} h_{i,j}\vector{v}_i^2 = \vector{0}$ \\
10: \hspace{0.4cm}$h_{j+1,j}
= \|\vector{v}_{j+1}\|_2 
= (\|\hat{\vector{v}}_{j+1}^1\|_2^2 + \|\hat{\vector{v}}_{j+1}^2 \|_2^2)^{\frac{1}{2}}
= \|\hat{\vector{v}}_{j+1}^1\|_2$.
~~If $h_{j+1,j}= 0$, go to line 14. \\
11 : Form the approximate solution\\
$\vector{x}_j^1 = \vector{x}_0^1 + [\vector{v}_1^1,....,\vector{v}_j^1]\vector{y}_j$
\hspace{2cm} $\vector{x}_j^2 = \vector{x}_0^2 + [\vector{v}_1^2,....,\vector{v}_j^2]\vector{y}_j
=\vector{x}_0^2 $ \\
where $\vector{y} = \vector{y}_{j+1}$ minimizes $\|\vector{r}_j^1\|_{2} $
$= \|\vector{b}^1 - A_{11}\vector{x}_0^1 - A_{11}[\vector{v}_{1}^1,....,\vector{v}_j^1]\vector{y}_j \\
- A_{12}\vector{x}_0^2 - A_{12}[\vector{v}_{1}^2,....,\vector{v}_j^2]\vector{y}_j\|_{2}$\\
$= \|\vector{b}^1 - A_{11}\vector{x}_j^1 - A_{12}\vector{x}_0^2\|_{2}$. ~~If $\|\vector{r}_j^1 \|_{2}$ = 0, go to line 14.\\
12: $\vector{v}_{j+1}^1 = \hat{\vector{v}}_{j+1}^1/h_{j+1,j}$ \hspace{4cm} $\vector{v}_{j+1}^2 
= \hat{\vector{v}}_{j+1}^2/h_{j+1,j} = \vector{0}$\\
13:end do \\
14: k :=j\\
15: Form the approximate solution\\
$\vector{x}_k^1 = \vector{x}_0^1 + [\vector{v}_1^1,....,\vector{v}_k^1]\vector{y}_k$
\hspace{2cm} $\vector{x}_k^2 = \vector{x}_0^2 + [\vector{v}_1^2,....,\vector{v}_k^2]\vector{y}_k
= \vector{x}_0^2$ \\
where $\vector{y} = \vector{y}_{k+1}$ minimizes $\|\vector{r}_k^1\|_{2} $
$= \|\vector{b}^1 - A_{11}\vector{x}_0^1 - A_{11}[\vector{v}_{1}^1,....,\vector{v}_k^1]\vector{y}_k \\
- A_{12}\vector{x}_0^2 - A_{12}[\vector{v}_{1}^2,....,\vector{v}_k^2]\vector{y}_k\|_{2}$\\
$= \|\vector{b}^1 - A_{11}\vector{x}_k^1 - A_{12}\vector{x}_0^2\|_{2}$. \\
16: $\vector{x}_k = Q_{1}\vector{x}_k^1 + Q_{2}\vector{x}_k^2 
= Q_{1}\vector{x}_k^1 + Q_{2}\vector{x}_0^2$ 

$\ran(\hat{A}) = \ran({\hat{A}}^{\rm {T}})$
is equivalent to $\ran(A) = \ran(A^{\rm {T}})$.

${Q_{1}}^{\rm T}AQ_{2}=0$ is equivalent to $\ran(\hat{A}) = \ran({\hat{A}}^{\rm {T}})$.
Hence, $A_{12}={Q_{1}}^{\rm T}AQ_{2}=0$ holds if and only if $\ran(A) = \ran(A^{\rm {T}})$.

If $A_{12} = 0$, the decomposed RRGMRES further simplifies as follows.
\vspace{12pt}

{\bf Algorithm 5: Decomposed RRGMRES method (Case ~$\ran(\hat{A}) = \ran({\hat{A}}^{\rm {T}})$)}
\vspace{12pt}
\\~~~~~$\ran(A)$ component \hspace{4cm} $\ran(A)^{\perp}$ component
\vspace{12pt}
\\1: $\vector{b}^1 = Q_1^{\rm T}\vector{b}$ \hspace{7cm} $\vector{b}^2 = Q_2^{\rm T}\vector{b}$ \\
2: Choose initial approximate solution $\vector{x}_0$. \\
3: Compute $ \vector{x}_0^1 =  Q_1^{\rm T}\vector{x}_0$. \hspace{4cm} $ 
\vector{x}_0^2 =  Q_2^{\rm T}\vector{x}_0$\\
4:  $\hat{\vector{r}}_0^1 
= A_{11}(\vector{b}^1 - A_{11}\vector{x}_0^1)$
 \hspace{5cm} $\hat{\vector{r}}_0^2 = \vector{0} $   \\
5:  $\|\hat{\vector{r}}_0\|_2 = \|\hat{\vector{r}}_0^1\|_2 
= \| A_{11}(\vector{b}^1 - A_{11}\vector{x}_0^1)\|_2 $ \\
6: $\vector{v}_1^1 : = \hat{\vector{r}}_0^1/\|\hat{\vector{r}}_0\|_{2} = 
 \hat{\vector{r}}_0^1/\|\hat{\vector{r}}_0^1\|_{2}$ \hspace{4cm}
$\vector{v}_1^2 : = \hat{\vector{r}}_0^2/\|\hat{\vector{r}}_0\|_2 = \vector{0}$\\
7: for $j=1$ until convergence do \\
8: \hspace{0.4cm}$h_{i,j}=
  (\vector{v}_i^1,A_{11}\vector{v}_j^1)$~~$(i=1,2,...,j)$\\
9: \hspace{0.4cm}$\hat{\vector{v}}_{j+1}^1 = A_{11}\vector{v}_j^1  
- \sum_{i=1}^j h_{i,j}\vector{v}_i^1$ 
\hspace{2cm}
$\hat{\vector{v}}_{j+1}^2 = -{\sum_{i=1}^j} h_{i,j}\vector{v}_i^2 = \vector{0}$ \\
10: \hspace{0.4cm}$h_{j+1,j}
= \|\vector{v}_{j+1}\|_2 
= (\|\hat{\vector{v}}_{j+1}^1\|_2^2 + \|\hat{\vector{v}}_{j+1}^2 \|_2^2)^{\frac{1}{2}}
= \|\hat{\vector{v}}_{j+1}^1\|_2$.
~~If $h_{j+1,j}= 0$, go to line 14. \\
11 : Form the approximate solution\\
$\vector{x}_j^1 = \vector{x}_0^1 + [\vector{v}_1^1,....,\vector{v}_j^1]\vector{y}_j$
\hspace{2cm} $\vector{x}_j^2 = \vector{x}_0^2 + [\vector{v}_1^2,....,\vector{v}_j^2]\vector{y}_j
=\vector{x}_0^2 $ \\
where $\vector{y} = \vector{y}_{j+1}$ minimizes $\|\vector{r}_j^1\|_{2} $
$= \|\vector{b}^1 - A_{11}\vector{x}_0^1 - A_{11}[\vector{v}_{1}^1,....,\vector{v}_j^1]\vector{y}_j \|_{2}$\\
$= \|\vector{b}^1 - A_{11}\vector{x}_j^1 \|_{2}$. ~~If $\|\vector{r}_j^1 \|_{2}$ = 0, go to line 14.\\
12: $\vector{v}_{j+1}^1 = \hat{\vector{v}}_{j+1}^1/h_{j+1,j}$ \hspace{4cm} $\vector{v}_{j+1}^2 
= \hat{\vector{v}}_{j+1}^2/h_{j+1,j} = \vector{0}$\\
13:end do \\
14: k :=j\\
15: Form the approximate solution\\
$\vector{x}_k^1 = \vector{x}_0^1 + [\vector{v}_1^1,....,\vector{v}_k^1]\vector{y}_k$
\hspace{2cm} $\vector{x}_k^2 = \vector{x}_0^2 + [\vector{v}_1^2,....,\vector{v}_k^2]\vector{y}_k
= \vector{x}_0^2$ \\
where $\vector{y} = \vector{y}_{k+1}$ minimizes $\|\vector{r}_k^1\|_{2} $
$= \|\vector{b}^1 - A_{11}\vector{x}_0^1 - A_{11}[\vector{v}_{1}^1,....,\vector{v}_k^1]\vector{y}_k \|_{2}$\\
$= \|\vector{b}^1 - A_{11}\vector{x}_k^1\|_{2}$. \\
16: $\vector{x}_k = Q_{1}\vector{x}_k^1 + Q_{2}\vector{x}_k^2 
= Q_{1}\vector{x}_k^1 + Q_{2}\vector{x}_0^2$ 

\vspace{12pt}
The $\ran(A)$ component of RRGMRES is essentially equivalent to RRGMRES applied to $A_{11}\vector{x}^1=\vector{b}^1$.
Since  $\vector{x}_k = Q_{1}\vector{x}_k^1 + Q_{2}\vector{x}_k^2=Q_{1}\vector{x}_k^1 +  Q_{2}\vector{x}_0^2$,
RRGMRES is essentially equivalent to RRGMRES applied to $A_{11}\vector{x}^1=\vector{b}^1$.
If $\ran(\hat{A}) = \ran(\hat{A}^{\rm {T}})$, $A_{11}$ is nonsingular.
If $\ran(\hat{A}) = \ran(\hat{A}^{\rm {T}})$, RRGMRES gives a least squares solution for all $\vector{b}$.

Since RRGMRES is essentially equivalent to RRGMRES applied to the nonsingular system $A_{11}\vector{x}^1=\vector{b}^1$
and $A_{11} \in \rrr$, RRGMRES converges in at most $r$ iterations from Theorem \ref{RRbr}.

If $\vector{x}_0 \in \ran(A)$, 
$\vector{x} \in \ran(A)$ holds for the solution $\vector{x}$ of RRGMRES. Since $\ran(A) = \ran(A^{\rm T})$,
$\vector{x} \in \ran(A^{\rm T})$. Since $\ran(A^{\rm T}) = \nul(A)^{\perp}$, $\vector{x} \in \nul(A)^{\perp}$.
Then, the solution $\vector{x}$ is the minimum norm solution.

Next, the necessity follows since if we assume that $A_{12} \neq 0$,
then there exists a $\vector{b}$ such that the decomposed RRGMRES  
(general case) breaks down at $0$th step without giving a least squares
solution. The details are given below.

Assume that $A_{12} \neq 0$. Then, there exists $\vector{s}^1 \neq \vector{0}$
such that $A_{11}\vector{s}^1 + A_{12}\vector{s}^2 = \vector{0}$
where $\vector{s}^1 \in \rr$ and $\vector{s}^2 \in {\bf R}^{n-r}$.

This can be shown as follows.

If $A_{11}$ is singular, there exists $\vector{s}^1 \neq \vector{0}$ such that
$A_{11}\vector{s}^1 = \vector{0}$, so let $\vector{s}^2 = \vector{0}$.

If $A_{11}$ is nonsingular, consider the following.
Since $A_{12} \neq 0$, there exists $(A_{12})_{i,j} \neq 0$, so that
$A_{12}\vector{e}_j \neq \vector{0}$, where $\vector{e}_j \in {\bf R}^{n-r}$
is the $j$th unit vector. Let $\vector{s}^2 = \vector{e}_j \neq \vector{0}$,
so that $A_{12}\vector{s}^2 \neq \vector{0}$.
Then,
let $\vector{s}^1 = - A_{11}^{-1} A_{12}\vector{s}^2 \neq \vector{0}$.
Thus, we have $A_{11}\vector{s}^1 + A_{12}\vector{s}^2 = \vector{0}$,
where $\vector{s}^1 \neq \vector{0}$.

Thus, let 
$\vector{b}^1 = \vector{s}^1 + A_{11}\vector{x}_0^1 + A_{12} \vector{x}_0^2$
and $\vector{b}^2 = \vector{s}^2$ in the decomposed RRGMRES (general case).

Then,
in the $\ran(A)$ component of the line 4 of the algorithm 4 of the decomposed 
RRGMRES (general case),
\begin{eqnarray*}
\hat{\vector{r}}_0^1 
& = &  A_{11}(\vector{b}^1 - A_{11}\vector{x}_0^1 - A_{12} \vector{x}_0^2) 
+ A_{12} \vector{b}^2 \\
                      & = & A_{11}\vector{s}^1 + A_{12}\vector{s}^2 \\
                      & = & \vector{0}
\end{eqnarray*}
Then, the algorithm terminates. That is, $\vector{v}_1^1$, $\vector{v}_1^2$ are not generated.
However, 
$\vector{r}_0^1 = \vector{b}^1 - A_{11}\vector{x}_0^1 - A_{12}\vector{x}_0^2 = \vector{s}^1 \neq \vector{0}$.
Hence, $Q_1^{\rm T}\vector{r}_0 = \vector{r}_0^1 \neq \vector{0}$, so that
$A^{\rm T}\vector{r}_0 \neq \vector{0}$, which means that $\vector{x}_0$ is not a least squares solution.
Then, this is a contradiction.

\end{proof}

Let $\rank A = r$. 
We showed that
RRGMRES  was equivalent to RRGMRES for nonsingular system 
when $\ran(A) = \ran(A^{\rm T})$. 
If the coefficient matrix for this nonsingular system
is described as $A_{11}$,
$A_{11} \in \rrr$.
Furthermore, let $\vector{x}_0$ be the initial iterate and $\vector{r}_0 = \vector{b} - A\vector{x}_0$.

Here, ${\cal K^{\ast}}_k(A, A\vector{r}_0)$ is the $k$th Krylov subspace determined by $A$ and $A\vector{r}_0$, defined by
\[ {\cal K^{\ast}}_k(A, A\vector{r}_0) \equiv {\rm span} \{ A\vector{r}_0, \ldots, A^k \vector{r}_0 \}. \].

Similarly to the breakdown of GMRES \cite{BW},
we shall say RRGMRES $does$ $not$ $break$ $down$ at the $k$th step if dim$A({\cal K^{\ast}}_k) = k$.

For range-symmetric systems, RRGMRES does not break down due to rank deficiency of $A{\cal K^{\ast}}_k$ 
(\cite[Lemma 3.1]{CLR}).
Furthermore, for range-symmetric systems, the minimum singular value of the Hessenberg matrix of RRGMRES is 
larger than $\sigma_r(A)$ which is the smallest singular values of $A$ \cite{MR}.

Now, by using the theorem that RRGMRES for range-symmetric systems essentially is equivalent
to RRGMRES for nonsingular systems, 
we will show that the RRGMRES breaks down due to the degeneracy of ${\cal K^{\ast}}_k$, which occurs when 
dim${\cal K^{\ast}}_k <k$ and 
the minimum singular value of the Hessenberg of RRGMRES is 
larger than $\sigma_r(A)$ which is the smallest singular values of $A$. 
Furthermore, we will show that the approximate solution $\bx_{k}$ at the $k$th step of RRGMRES is 
exact (which means that RRGMRES converges) if $h_{k+1,k} = 0$, where $h_{k+1,k}$ is 
the $(k+1,k)$ element of the Hessenberg matrix
$H_{k+1,k}$ of RRGMRES.

\begin{theorem}\label{RRbr2}
For range-symmetric systems,
RRGMRES breaks down due to the degeneracy of ${\cal K^{\ast}}_k$, which occurs when 
dim${\cal K^{\ast}}_k <k$ and 
the smallest singular value of the Hessenberg matrix of RRGMRES is 
larger than $\sigma_r(A)$ which is the smallest singular values of $A$.
 
Furthermore, 
the approximate solution $\bx_{k}$ at the $k$th step of RRGMRES is 
exact (which means that RRGMRES converges) if 
$h_{k+1,k} = 0$, where $h_{k+1,k}$ is the $(k+1,k)$ element of the Hessenberg matrix.

\end{theorem}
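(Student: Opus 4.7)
The plan is to reduce everything to the nonsingular case by invoking the decomposition in Theorem \ref{RRsg} (Algorithm 5) and then applying Theorem \ref{RRbr}. First I would exploit the observation already embedded in Algorithm 5: because $\ran(A) = \ran(A^\trans)$ forces $A_{12} = 0$, the $\ran(A)^\perp$ components $\bv_j^2$ of the Arnoldi vectors vanish for all $j$, and the Hessenberg coefficients $h_{i,j} = (\bv_i^1, A_{11}\bv_j^1)$ reduce to exactly those produced by RRGMRES applied to the nonsingular system $A_{11}\bx^1 = \bb^1$. Consequently the Hessenberg matrix $H_{k+1,k}$ of RRGMRES on $A$ coincides with that of RRGMRES on $A_{11} \in \rrr$.

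Given this identification, the exactness claim when $h_{k+1,k}=0$ is essentially immediate. By Theorem \ref{RRbr} applied to the nonsingular $A_{11}$, any occurrence of $h_{k+1,k}=0$ forces $\bx_k^1$ to solve $A_{11}\bx^1 = \bb^1$ exactly. Since the $\ran(A)^\perp$ component $\bx_k^2 = \bx_0^2$ is never altered throughout the iteration, the iterate $\bx_k = Q_1\bx_k^1 + Q_2\bx_0^2$ is then a least squares solution of $A\bx = \bb$. The same equivalence yields the breakdown statement: any vanishing $h_{k+1,k}$ that did not already reflect $\dim{\cal K}^\ast_k < k$ would contradict the ``exactness at $h_{k+1,k}=0$'' property of RRGMRES on the nonsingular $A_{11}$, so breakdown can only occur through degeneracy of ${\cal K}^\ast_k$.

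For the singular value bound, I would appeal directly to the Arnoldi relation $A_{11}V_k^1 = V_{k+1}^1 H_{k+1,k}$, in which $V_{k+1}^1$ has orthonormal columns. For any $\by\in\rk$, this yields $\|H_{k+1,k}\by\|_2 = \|V_{k+1}^1 H_{k+1,k}\by\|_2 = \|A_{11}V_k^1\by\|_2 \geq \sigma_{\min}(A_{11})\,\|\by\|_2$, hence $\sigma_{\min}(H_{k+1,k}) \geq \sigma_{\min}(A_{11})$. Since $A = Q\,{\rm diag}(A_{11},0)\,Q^\trans$ with $Q$ orthogonal, the nonzero singular values of $A$ agree with the singular values of $A_{11}$; in particular $\sigma_{\min}(A_{11}) = \sigma_r(A)$, yielding the desired $\sigma_{\min}(H_{k+1,k}) \geq \sigma_r(A)$.

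The main obstacle, insofar as there is one, is purely bookkeeping: verifying carefully that Algorithm 5 leaves $H_{k+1,k}$ literally unchanged (so that statements about $h_{k+1,k}$ transfer verbatim between the $A$ and $A_{11}$ iterations), and that the block structure $\hat{A} = {\rm diag}(A_{11},0)$ transfers the nonzero singular values of $A$ to those of $A_{11}$. Neither step requires a new estimate; the substantive content is entirely supplied by Theorem \ref{RRsg}, which isolates $A_{11}$ as a nonsingular subproblem, and Theorem \ref{RRbr}, which handles the nonsingular side.
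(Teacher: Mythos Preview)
Your proposal is correct and follows the same overall strategy as the paper: reduce RRGMRES on $A$ to RRGMRES on the nonsingular block $A_{11}$ via the orthogonal decomposition of Theorem~\ref{RRsg}, then invoke Theorem~\ref{RRbr}. The paper likewise verifies $H_{k+1,k}=\tilde{H}_{k+1,k}$ by direct computation of $h_{i,j}$ and draws the breakdown and exactness claims from the nonsingular case.

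Where you diverge is in two sub-steps, and in both places your argument is the cleaner one. First, to identify the nonzero singular values of $A$ with those of $A_{11}$, the paper writes out the full SVD $A=U\Sigma V^{\trans}$, computes $A_{11}=Q_1^{\trans}U_1\Sigma_r V_1^{\trans}Q_1$, and then checks a sequence of identities ($V_1^{\trans}Q_1Q_1^{\trans}V_1=I_r$, $Q_1^{\trans}U_1U_1^{\trans}Q_1=I_r$, etc.) to conclude that $A_{11}A_{11}^{\trans}$ has eigenvalues $\sigma_i^2$. Your one-line observation that $A=Q\,{\rm diag}(A_{11},0)\,Q^{\trans}$ with $Q$ orthogonal delivers the same conclusion immediately. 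Second, the paper simply asserts $\sigma_k(\tilde{H}_{k+1,k})\geq\sigma_r(A_{11})$ for the nonsingular system (implicitly leaning on \cite{MR}), whereas you derive it explicitly from the Arnoldi relation $A_{11}V_k^1=V_{k+1}^1H_{k+1,k}$. Both simplifications are sound and make the argument more self-contained.
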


\begin{proof}
Let $\rank A = r$.
	Assume $\ran(A) = \ran(A^{\rm T})$.

Let $\vector{q}_1, ..., \vector{q}_r$ be orthonormal bases of $\ran(A)$
and $\vector{q}_{r+1}, ..., \vector{q}_n$ be orthonormal bases of $\ran(A)^{\perp}$. 

We define the matrices $Q_{1},Q_{2},Q$ as follows.
\begin{eqnarray*}
Q_{1} : & = &  [\vector{q}_{1},....,\vector{q}_{r}] \in \mathbb{R}^{n \times r}\\
Q_{2} : & = & [\vector{q}_{r+1},....,\vector{q}_{n}] \in \mathbb{R}^{n \times (n-r)} \\
Q     :  & = & [Q_{1}, Q_{2}] \in \mathbb{R}^{n \times n}
\end{eqnarray*}
We define $A_{11} = Q_1^{\rm T}AQ_1$, $\bb^1 = Q_1^{\rm T}\bb$
 and $\bx^1 = Q_1^{\rm T}\bx$.
Furthermore, let $\vector{x}_0^1 = Q_1^{\rm T}\vector{x}_0$ and
$\vector{r}_0^1 = \vector{b}_0^1 - A_{11}\vector{x}_0^1$.
	Since $R(A) = R(A^{\rm T})$, $A_{11}$ is nonsingular and $A_{11} \in \rrr$.
	Furthermore, since $R(A) = R(A^{\rm T})$, RRGMRES applied to $A\bx = \bb$ 
is essentially equivalent to RRGMRES applied to the nonsingular range-symmetric system 
$A_{11}\bx^1 = \bb^1$.

Since $A_{11}$ is nonsingular, ${\rm dim} A_{11}{\cal K^{\ast}}_k = {\rm dim} {\cal K^{\ast}}_k$.
Therefore, RRGMRES applied to $A_{11}\bx^1 = \bb^1$
does not break down due to rank deficiency of $A_{11}{\cal K^{\ast}}_k(A_{11}, A_{11}\vector{r}_0^1)$.
That is , RRGMRES applied to $A_{11}\bx^1 = \bb^1$ breaks down due to the degeneracy of 
${\cal K^{\ast}}_k(A_{11}, A_{11}\vector{r}_0^1)$, which occurs when 
dim${\cal K^{\ast}}_k(A_{11}, A_{11}\vector{r}_0^1) <k$.
Since RRGMRES applied to $A\bx = \bb$ 
is essentially equivalent to RRGMRES applied to the nonsingular range-symmetric system 
$A_{11}\bx^1 = \bb^1$, 
RRGMRES breaks down due to the degeneracy of 
${\cal K^{\ast}}_k(A, A\vector{r}_0)$, which occurs when 
dim${\cal K^{\ast}}_k(A, A\vector{r}_0) <k$. 
 
We will prove that
for $A_{11} = Q_1^{\rm T}AQ_1$, the singular values $\sigma_i(A_{11}) (i= 1,..,r)$
of $A_{11}$ are the same as the singular values $\sigma_i(A) (i= 1, ..., r)$ 
of $A$.

We denote the singular value decomposition of A by
$\displaystyle A  =  U\Sigma V^{\rm T}$
where $U \in \rnn$ and $V \in \rnn$
are orthogonal matrices 
$U^{\rm T}U = UU^{\rm T} = V^{\rm T}V = VV^{\rm T} = I_n$, $I_n \in \rnn$ is the identity matrix,
$\Sigma = {\rm diag}(\sigma_1, \sigma_2, ...,\sigma_r, 0,0,...,0) \in \rnn$, and
$\sigma_i$ is the $i$th largest nonzero singular values of $A$.
Let $\Sigma_r = {\rm diag}(\sigma_1, \sigma_2, ..., \sigma_r)$.

Let $U = [U_1, U_2]$ and $V = [V_1, V_2]$, where the columns of $U_1 \in \rnr$ and
$U_2 \in \rnnr$ form orthonormal bases of $\ran(A) = \ran(U_1)$ and 
$\ran(A)^{\perp} = \nul(A^{\rm T}) = \ran(U_2)$ respectively, and the columns of 
$V_1 \in \rnr$ and $V_2 \in \rnnr$ form orthonormal bases of $\nul(A)^{\perp} = \ran(A^{\rm T}) = \ran(V_1)$
and $\nul(A) = \ran(V_2)$, respectively.

\begin{eqnarray*}
A & = & U \Sigma V^{\rm T} \\
  & = & 
\left[U_1, U_2 \right]
\left[
\begin{array}{cc}
\Sigma_r & 0 \\
0                                 &  0                            
\end{array}
\right]
\left[
\begin{array}{c} 
V_1^{\rm T} \\
V_2^{\rm T}
\end{array}
\right] 
\end{eqnarray*}

Here, $Q_1^{\rm T} U_2 = 0$ since the columns of $U_2 \in \rnnr$ form orthonormal bases of 
$\ran(A)^{\perp} = \ran(U_2)$ and the columns of $Q_1 \in \rnr$ form orthonormal bases of 
$\ran(A)$.
Furthermore, since the columns of $V_2$ form orthonormal bases of $\nul(A) = \ran(V_2)$, $\nul(A) = \nul(A^{\rm T})$ 
is equivalent to $\ran(A) = \ran(A^{\rm T})$ and $\nul(A^{\rm T}) = \ran(A)^{\perp}$,
$V_2^{\rm T} Q_1 = 0$.

Then,
\begin{eqnarray*}
A_{11} &= & Q_1^{\rm T}A Q_1 \\
       & = & 
\left[Q_1^{\rm T}U_1, Q_1^{\rm T}U_2 \right]
\left[
\begin{array}{cc}
\Sigma_r & 0 \\
0                                 &  0                            
\end{array}
\right]
\left[
\begin{array}{c} 
V_1^{\rm T}Q_1 \\
V_2^{\rm T}Q_1
\end{array}
\right] \\
& = & 
Q_1^{\rm T}U_1 \Sigma_r V_1^{\rm T}Q_1
\end{eqnarray*}

Then,
\begin{eqnarray*}
A_{11}A_{11}^{\rm T} & = & Q_1^{\rm T}U_1 \Sigma_r V_1^{\rm T}Q_1 Q_1^{\rm T}V_1 \Sigma_r U_1^{\rm T}Q_1.
\end{eqnarray*}
Here, $Q_1 Q_1^{\rm T} + Q_2 Q_2^{\rm T} = I_n$ where $I_n \in \rnn$ is the identity matrix.
Then,
\begin{eqnarray*}
V_1^{\rm T}Q_1 Q_1^{\rm T}V_1 & = & V_1^{\rm T}(I_n - Q_2 Q_2^{\rm T}) V_1 \\
                                & = & V_1^{\rm T}V_1 - V_1^{\rm T}Q_2 Q_2^{\rm T}V_1
\end{eqnarray*}
Since the columns of $V_1$ form orthonormal bases of $\nul(A)^{\perp} = \ran(A^{\rm T}) = \ran(V_1)$, 
$V_1^{\rm T}V_1 = I_r$ where $I_r \in \rrr$ is the identity matrix.
Furthermore, since the columns of $Q_2$ form orthonormal bases of $\ran(A)^{\perp}$, 
$V_1^{\rm T}Q_2 = 0$ and $Q_2^{\rm T}V_1 = 0$.

Then,
$V_1^{\rm T}Q_1 Q_1^{\rm T}V_1 = I_r$.

Since $[U_1, U_2] \in \rnn$ is an orthogonal matrix,
$U_1 U_1^{\rm T} + U_2 U_2^{\rm T} = I_n$.
Then, 
$Q_1^{\rm T}U_1 U_1^{\rm T}Q_1 + Q_1^{\rm T}U_2 U_2^{\rm T}Q_1 = Q_1^{\rm T}Q_1 = I_r$.
Since the columns of $U_2$ form orthonormal bases of $\ran(A)^{\perp}$ and 
the columns of $Q_1$ form orthonormal bases of $\ran(A)$, 
$Q_1^{\rm T}U_2 = 0$ and $U_2^{\rm T} Q_1 = 0$.

Then, 
$Q_1^{\rm T}U_1 U_1^{\rm T}Q_1 = I_r$.

Since $[Q_1, Q_2] \in \rnn$ is an orthogonal matrix,
$Q_1 Q_1^{\rm T} + Q_2 Q_2^{\rm T} = I_n$.
Then, 
$U_1^{\rm T}Q_1 Q_1^{\rm T}U_1 + U_1^{\rm T}Q_2 Q_2^{\rm T}U_1 = U_1^{\rm T}U_1 = I_r$.
Since the columns of $Q_2$ form orthonormal bases of $\ran(A)^{\perp}$ and 
the columns of $U_1$ form orthonormal bases of $\ran(A)$,
$U_1^{\rm T}Q_2 Q_2^{\rm T}U_1 = 0$.

Then,
$U_1^{\rm T}Q_1 Q_1^{\rm T}U_1 = I_r$.

Since $Q_1^{\rm T}U_1 U_1^{\rm T}Q_1 = I_r$ and $U_1^{\rm T}Q_1 Q_1^{\rm T}U_1 = I_r$,
$Q_1^{\rm T}U_1 \in \rrr$ and $U_1^{\rm T}Q_1 \in \rrr$ is an orthogonal matrix.

Then, the eigenvalues of $A_{11}A_{11}^{\rm T}$ are the diagonal element of $\Sigma_r^{2}$.
Thus, the singular values of $A_{11}$ are the same as the singular values $\sigma_i(A)(i=1,..,r)$ of $A$.

Since $A_{11}$ is nonsingular, \[\sigma_k(\tilde{H}_{k+1,k}) \geq \sigma_r(A_{11})\].
where $\tilde{H}_{k+1,k}$ is a Hessenberg matrix of RRGMRES applied to $A_{11}\bx^1 = \bb^1$.

Next, 
we will prove that $H_{k+1,k} = \tilde{H}_{k+1,k} (1 \leq k \leq r)$ 
where $H_{k+1,k}$, $\tilde{H}_{k+1,k}$ are Hessenberg matrices 
of RRGMRES applied to $A\bx = \bb$, $A_{11}\bx^1 = \bb^1$.

\begin{eqnarray*}
A  &= & 
\left[Q_1, Q_2 \right]
\left[
\begin{array}{cc}
A_{11} & A_{12} \\
0                                 &  0                            
\end{array}
\right]
\left[
\begin{array}{c} 
Q_1^{\rm T} \\
Q_2^{\rm T}
\end{array}
\right] \\
& = & 
Q_1 A_{11}Q_1^{\rm T} + Q_1 A_{12}Q_2^{\rm T}
\end{eqnarray*}

When $\ran(A) = \ran(A^{\rm T})$, $A_{12} = 0$.
Then, $A = Q_1 A_{11}Q_1^{\rm T}$ when $\ran(A) = \ran(A^{\rm T})$.

\begin{eqnarray*}
h_{i,j} & = & \vector{v}_{i}^{\rm T} A \vector{v}_j \\
        & = & (Q_1 \vector{v}_i^{1} + Q_2 \vector{v}_i^{2})^{\rm T} Q_1 A_{11}Q_1^{\rm T} (Q_1\vector{v}_j^{1}+Q_2\vector{v}_j^{2})\\
        & = & (Q_1 \vector{v}_i^{1} + Q_2 \vector{v}_i^{2})^{\rm T}Q_1 A_{11} I_r \vector{v}_j^{1} \\
        & = & (Q_1 \vector{v}_i^{1} + Q_2 \vector{v}_i^{2})^{\rm T}Q_1 A_{11} \vector{v}_j^{1} \\
        & = & (Q_1 \vector{v}_i^{1})^{\rm T}Q_1 A_{11} \vector{v}_j^{1} \\
        & = & (\vector{v}_i^{1})^{\rm T}Q_1^{\rm T}Q_1 A_{11} \vector{v}_j^{1} \\
        & = & (\vector{v}_i^{1})^{\rm T}A_{11} \vector{v}_j^{1}
\end{eqnarray*}

Then, 
$H_{k+1,k} = \tilde{H}_{k+1,k} (1 \leq k \leq r)$ where $H_{k+1,k}$, $\tilde{H}_{k+1,k}$ are Hessenberg matrices 
of RRGMRES applied to $A\bx = \bb$, $A_{11}\bx^1 = \bb^1$.

Furthermore,  $\sigma_r(A_{11})$ is equal to $\sigma_r(A)$.

Therefore, 
\begin{equation}\label{sgHbd}
\sigma_k(H_{k+1,k}) \geq \sigma_r(A)
\end{equation}
since \[\sigma_k(\tilde{H}_{k+1,k}) \geq \sigma_r(A_{11})\].

Here, when RRGMRES is applied to the nonsingular system $A_{11}\vector{x}^1=\vector{b}^1$, 
the iterate at $k$step is exact if $h_{k+1,k} = 0$. 

Furthermore, the values of the $(k+1, k)$ element of the Hessenberg matrix of 
RRGMRES applied to $A_{11}\bx^1 = \bb^1$ is equal to that of 
the $(k+1, k)$ element of the Hessenberg matrix of 
RRGMRES applied to $A\bx = \bb$.

Since RRGMRES applied to $A\bx = \bb$ is equivalent to
RRGMRES applied to the nonsingular system $A_{11}\vector{x}^1=\vector{b}^1$, 
the iterate at $k$step is exact if $h_{k+1,k} = 0$.  
\end{proof}

\end{appendices}

\end{document}